\numberwithin{equation}{section}
\title[On the factorization of iterated polynomials]{On the factorization of iterated polynomials}
\author[L. Reis]{Lucas Reis}
\address[lucasreismat@gmail.com]{Instituto de Ci\^{e}ncias Matem\'{a}ticas e de Computa\c{c}\~{a}o\\
Universidade de S\~{a}o Paulo\\
USP\\
S\~{a}o Carlos, SP\\
13560-970\\
Brazil\\}
\keywords{dynamics over finite fields, factorization, irreducible polynomials}
\newcommand{\F}{\mathbb{F}}
\newcommand{\ord}{\mathrm{ord}}
\newcommand{\ou}{\mathcal O}
\newtheorem{theorem}{Theorem}[section]
\newtheorem{conjecture}[theorem]{Conjecture}
\newtheorem{proposition}[theorem]{Proposition}
\newtheorem{definition}[theorem]{Definition}
\newtheorem{problem}[theorem]{Problem}
\newtheorem{lemma}[theorem]{Lemma}
\newtheorem{corollary}[theorem]{Corollary}
\newtheorem{remark}[theorem]{Remark}
\begin{document}
%
%

\begin{abstract}
Let $\F_q$ be the finite field with $q$ elements and $f, g\in \F_q[x]$ be polynomials of degree at least one. This paper deals with the asymptotic growth of certain arithmetic functions associated to the factorization of the iterated polynomials $f(g^{(n)}(x))$ over $\F_q$, such as the largest degree of an irreducible factor and the number of irreducible factors. In particular, we provide significant improvements on the results of D.~G\'{o}mez-P\'{e}rez, A.~Ostafe and I.~Shparlinski (2014). 
\end{abstract}

\section{Introduction}
For a field $K$ and a polynomial $f\in K[x]$, let $f^{(0)}(x)=x$ and $f^{(n)}(x)=f(f^{(n-1)}(x))$ for $n\ge 1$. The polynomial $f^{(n)}(x)$ is called the $n$-th iterate of $f$. Algebraic aspects of $f^{(n)}(x)$ have been extensively considered by many authors in the past few years \cite{A05,AM00,J08,JB12,K85}; in most of the cases, the main object of study is the class of polynomials $f\in K[x]$ for which its iterates $f^{(n)}(x)$ are all irreducible over $K$. Such polynomials are called \emph{stable}. In the case $K$ a finite field, a recent work~\cite{HBM} extends the notion of stability to finite sets of polynomials $\{f_1, \ldots, f_r\}$. More specifically, the authors explore the sets of quadratic polynomials $\mathcal C=\{f_1, \ldots, f_r\}$ over $\F_q$ such that any polynomial obtained by compositions of elements in $\mathcal C$ is irreducible. In~\cite{GOS}, the authors explore further arithmetic properties of the factorization of $f^{(n)}(x)$ when $K=\F_q$ is the finite field with $q$ elements. They obtain lower bounds on the values of some arithmetic functions related to the factorization of $f^{(n)}(x)$ such as the number of irreducible factors and the degree of its squarefree part. For functions $A, B:\mathbb N\to \mathbb R_{>0}$, we write $A\ll B$ (or equivalently $B\gg A$) if $A(n)\le c\cdot B(n)$ for some $c>0$ and $A=o(B)$ if $\lim\limits_{n\to \infty}\frac{A(n)}{B(n)}=0$. In addition, we write $A\approx B$ if $A\ll B$ and $A\gg B$. The main results in~\cite{GOS} are Theorems 9, 10 and 11 and they can be compiled as follows.

\begin{theorem}\label{thm:shpa}
For any fixed $\varepsilon>0$ and a positive integer $d$ such that $\gcd(d, q)=1$, all but $o(q^{d+1})$ polynomials $f\in \F_q[x]$ of degree $d$ satisfy the following:

\begin{enumerate}[(i)]
\item If $\Delta_n(f)$ is the degree of the squarefree part of $f^{(n)}(x)$, then $\Delta_n(f)\gg n^{1-\varepsilon}$. Moreover, the implicit constant in the previous inequality can be taken uniformly on $q$.

\item If $r_n(f)$ is the number of distinct irreducible factors of $f^{(n)}(x)$, we have that $r_n(f)\ge (0.5+o(1))\cdot n$ when $n\to \infty$ provided that
$$n\le \left\lceil\left(\frac{1}{2\log d}-\varepsilon\right)\log q\right\rceil.$$
\end{enumerate}
Additionally, if $f\in \F_q[x]$ has degree $d$ with $\gcd(d, q)=1$ and $f$ is not of the form $ax^d$, then the largest degree $D_n(f)$ of an irreducible factor of $f^{(n)}(x)$ satisfies
\begin{equation*}\label{eq:shp}
D_n(f)>\frac{(n-1)\log d-\log 2}{\log q}\approx n.
\end{equation*}
\end{theorem}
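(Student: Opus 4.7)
The plan is to address the three claims separately, treating the final inequality for $D_n(f)$ first as it is the cleanest and exposes techniques reused below.

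For the lower bound on $D_n(f)$, set $D = D_n(f)$ and observe that every distinct root of $f^{(n)}(x)$ lies in some $\F_{q^k}$ with $k \leq D$, hence in $\bigcup_{k=1}^{D}\F_{q^k}$, a set of cardinality at most $\sum_{k=1}^{D} q^k \leq 2q^{D}$. Consequently, the degree $\Delta_n(f)$ of the squarefree part of $f^{(n)}(x)$ satisfies $\Delta_n(f) \leq 2q^{D_n(f)}$. To finish, I would prove the structural lower bound $\Delta_n(f) \geq d^{n-1}$ whenever $f \neq ax^d$ and $\gcd(d,q) = 1$, by induction on $n$: the distinct roots of $f^{(n)}(x)$ are the preimages under $f$ of the distinct roots of $f^{(n-1)}(x)$, and since $\gcd(d,q)=1$, $f' \not\equiv 0$, so $f$ has at most $d-1$ critical values; every non-critical value of $f^{(n-1)}(x)$ contributes exactly $d$ distinct preimages. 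Combining $d^{n-1} \leq 2q^{D_n(f)}$ and taking logarithms yields the claimed inequality.

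For part (i), the strategy is a counting argument on the parameter space $\F_q^{d+1}$ of polynomials of degree $d$. A small value of $\Delta_n(f)$ forces $f^{(n)}(x)$ and its derivative $(f^{(n)})'(x) = \prod_{k=0}^{n-1} f'(f^{(k)}(x))$ to share many common factors, which in turn means the forward orbit of $f$ under iteration frequently meets the critical set $\{f' = 0\}$. Each such coincidence translates into a polynomial condition on the coefficients of $f$ of controlled degree, and I would bound the exceptional locus via the discriminant of $f^{(n)}$ together with resultants $\mathrm{Res}(f^{(k)}, f')$. Showing that these exceptional sets have cardinality $o(q^{d+1})$ and extracting the growth rate $\Delta_n(f) \gg n^{1-\varepsilon}$ with a constant uniform in $q$ rests on the fact that the defining polynomial relations have degree independent of $q$.

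For part (ii), I would begin from the elementary inequality $r_n(f) \geq \Delta_n(f)/D_n(f)$ and note that the hypothesis $n \leq \lceil (1/(2\log d) - \varepsilon)\log q \rceil$ is equivalent to $d^n \leq q^{1/2 - O(\varepsilon)}$. In this narrow range, sharper inductive arguments become available: for each irreducible factor $P$ of $f^{(n-1)}(x)$, the pullback $P(f(x))$ divides $f^{(n)}(x)$, and its factorization over $\F_q$ is governed by the Frobenius action on the preimages of the roots of $P$. A typical $f$ makes $P(f(x))$ either irreducible or else genuinely reducible, gaining on average about $1/2$ new factor per iteration; iterating yields $r_n(f) \geq (0.5+o(1))n$.

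The main obstacle is part (i): achieving the sharp exponent $1-\varepsilon$ uniformly in $q$ requires precise control of how the critical set of $f$ interacts with its forward orbits, amounting to bounding the geometry of subvarieties of $\F_q^{d+1}$ parametrizing exceptional polynomials. The $D_n(f)$ bound and part (ii) both feed off the same $\Delta_n(f)$-estimates but are otherwise more routine once part (i) is in place.
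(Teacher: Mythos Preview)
The paper does not prove Theorem~\ref{thm:shpa}. It is stated there explicitly as a compilation of Theorems~9, 10 and~11 of G\'omez-P\'erez, Ostafe and Shparlinski~\cite{GOS}, quoted for context; the paper's own contribution is to supersede it (see Corollary~\ref{cor:MAIN}), not to reprove it. So there is no ``paper's own proof'' of this statement to compare against.

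That said, two remarks on your sketch are worth making. First, your argument for the $D_n(f)$ bound is essentially the same mechanism the paper uses in Lemma~\ref{lem:max-degree}: bound the number of distinct roots by $|\bigcup_{k\le D}\F_{q^k}|<2q^{D}$ and feed in a lower bound on $\Delta_n(f)$. The difference is the lower bound itself. Your one-line induction (``at most $d-1$ critical values, so non-critical fibres have $d$ points'') does not by itself yield $\Delta_n(f)\ge d^{n-1}$: the recursion it actually gives is $\Delta_n\ge d(\Delta_{n-1}-(d-1))+(d-1)$, and without further input this can be swamped by the loss at critical values (and note $f\ne ax^d$ does not force $\Delta_1\ge 2$; take $f(x)=(x-a)^d$). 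The paper's route to an exponential lower bound on $\Delta$ (Lemma~\ref{lem:delta}) is more delicate: it locates a non-periodic element $\gamma$ in the reversed $g$-orbit whose forward tree avoids the zero set of $G'$, so that \emph{along that subtree} no multiplicity is ever created. This is the missing idea in your induction.

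Second, for parts (i) and (ii) the original proofs in~\cite{GOS} rely on character-sum estimates over the coefficient space (the paper says so explicitly), not on the resultant/discriminant variety argument you outline. Your proposal for (i) is plausible in spirit but is a genuinely different strategy from the cited source, and your sketch for (ii) is too vague to assess---the inequality $r_n(f)\ge \Delta_n(f)/D_n(f)$ with $d^n\le q^{1/2-O(\varepsilon)}$ does not by itself produce $(0.5+o(1))n$, and the ``gaining on average $1/2$ new factor per iteration'' claim needs a concrete mechanism.
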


In particular, the functions $\Delta_n(f), r_n(f)$ and $D_n(f)$ grow (roughly) at least linearly with respect to $n$ under not too restricted conditions. It is worth mentioning that the exclusion of $o(q^{d+1})$ polynomials in Theorem~\ref{thm:shpa} is due to the character sums techniques employed in its proof.

In this paper, we explore similar questions to the ones discussed in~\cite{GOS} over a more general class of iterated polynomials. For polynomials $f, g\in \F_q[x]$, we define the following sequence of polynomials over $\F_q$:
$$P_0[f, g](x)=f(x), \quad P_n[f, g](x)= f(g^{(n)}(x)),\quad n\ge 1.$$
We call $P_n[f, g](x)$ the $n$-th $g$-iterate of $f$. Motivated by Question 18.9
in~\cite{arxiv}, we introduce the following arithmetic functions associated to the factorization of $P_n[f, g](x)$ over $\F_q$.
\begin{definition}\label{def:main}
For $f, g\in\F_q[x]$, let
$$P_n[f, g](x)=p_{1, n}(x)^{e_{1, n}}\cdots p_{N_n, n}(x)^{e_{N_n, n}},\quad n\ge 0,$$
be the factorization of $P_n[f, g](x)$ into irreducible polynomials over $\F_q$. For each $n\ge 0$, we consider the following arithmetic functions:

\begin{enumerate}[(a)]
\item $E_{f, g}(n):=\max\limits_{1\le i\le N_n}e_{i, n}$ is the largest multiplicity of a root of $P_n[f, g](x)$ (recall that finite fields are perfect fields);
\item $e_{f, g}(n):=\min\limits_{1\le i\le N_n}e_{i, n}$ is the smallest multiplicity of a root of $P_n[f, g](x)$;
\item $\Delta_{f, g}(n):=\deg(p_{1, n}(x)\cdots p_{N_n, n}(x))$ is the degree of the squarefree part of $P_n[f, g](x)$;
\item $M_{f, g}(n):=\max\limits_{1\le i\le N_n} \deg(p_{i, n}(x))$ is the largest degree of an irreducible factor of $P_n[f, g](x)$ over $\F_q$;
\item $m_{f, g}(n):=\min\limits_{1\le i\le N_n} \deg(p_{i, n}(x))$ is the smallest degree of an irreducible factor of $P_n[f, g](x)$ over $\F_q$;
\item $N_{f, g}(n):=N_n$ is the number of distinct irreducible factors of $P_n[f, g](x)$ over $\F_q$;
\item $A_{f, g}(n):=\frac{\Delta_{f, g}(n)}{N_{f, g}(n)}$ is the average degree of the distinct irreducible factors of $P_n[f, g](x)$ over $\F_q$.
\end{enumerate}
\end{definition}

If $f$ is a constant or $g$ has degree at most one, the arithmetic functions above are all constant. For this reason, we may restrict ourselves to the case $\deg(f)\ge 1$ and $\deg(g)\ge 2$. Our results have implications in the study of the asymptotic behaviour of the functions
in Definition~\ref{def:main} and, in particular, they extend some results in Theorem~\ref{eq:shp}. Most notably, in contrast to the lower bound $\Delta_n(f)\gg n^{1-\varepsilon}$ given in Theorem~\ref{eq:shp} we show that, up to genuine exceptions, $\Delta_n(f)$ has exponential growth. Furthermore, the condition $\gcd(\deg(f), q)=1$ is replaced by a less restrictive one. For more details, see Corollary~\ref{cor:MAIN}.

The structure of the paper is given as follows. In Section 2, we present the main results and provide useful remarks. Section 3 is devoted to provide some definitions and background material that is used along the way. In Sections 4 and 5 we prove our main results. Finally, in Section 6, we present conclusions and some open problems.

\section{Main results}
In this section, we provide the main results of this paper. We start with the following definition.
\begin{definition}
Let $p$ be the characteristic of $\F_q$ and consider $f, g\in \F_q[x]$ such that $\deg(f)=k\ge 1$ and $\deg(g)=D\ge 2$. 
\begin{enumerate}[(a)]
\item the \emph{$p$-reduction} of $g$ is the unique polynomial $G\in \F_q[x]$ such that $g(x)=G(x)^{p^h}$ for some $h\ge 0$ and the formal derivative $G'(x)$ of $G(x)$ is not the zero polynomial; 
\item the pair $(f, g)$ is \emph{$p$-critical} if the $p$-reduction of $g(x)$ is a linear polynomial, i.e., $g(x)=ax^{p^h}+b$ for some $h\ge 0$ and $a, b\in \F_q$;
\item the pair $(f, g)$ is \emph{critical} if there exist elements $\alpha, \beta$ and $\gamma$ in $\F_q$ such that $f(x)=\beta(x-\alpha)^k$ and $g(x)=\gamma(x-\alpha)^D+\alpha$.
\end{enumerate}
\end{definition}

A trivial upper bound for all the arithmetic functions in Definition~\ref{def:main} is $kD^n$, where $k=\deg(f)$ and $D=\deg(g)$.
It is a routine exercise to check that, if $(f, g)$ is critical or $p$-critical, $E_{f, g}(n)=E_{f, g}(0)\cdot D^n$, $e_{f, g}(n)=e_{f, g}(0)\cdot D^n$ and all the other arithmetic functions are constant, equal to their values at $n=0$. So, from now and on, we take the following assumption:
\begin{center}\fbox{The pair $(f, g)$ is neither critical nor $p$-critical.}\end{center}
The main results of this paper provide finner informations on the growth the arithmetic functions related to $P_n[f, g](x)$ and can be stated as follows.

\begin{theorem}\label{thm:main1}
Let $f, g\in \F_q[x]$ be polynomials of degrees $k\ge 1$ and $D\ge 2$, respectively, such that the pair $(f, g)$ is neither critical nor $p$-critical. Let $G\in \F_q[x]$ be the $p$-reduction of $g$, write $g(x)=G(x)^{p^h}$ and set $d=\deg(G)>1$. Then the following hold:
\begin{enumerate}[(i)]
\item $e_{f, g}(n)\ge p^{nh}$ for $n\ge 0$;
\item $E_{f, g}(n)\le \sqrt{\frac{D}{D-1}}\cdot \kappa_g^n\le \sqrt{2}\cdot \kappa_g^n$ for any $n\ge 0$, where $\kappa_g=\sqrt{D(D-1)}<D$. In particular, this bound is better than the trivial bound $E_{f, g}(n)\le k\cdot D^n$.
\item there exists a constant $C_{f, g}>0$ such that $\Delta_{f, g}(n)\ge C_{f, g}\cdot d^n$ for $n\ge 0$.
\end{enumerate}
In particular $\Delta_{f, g}(n)\approx d^n$ and so the function $\Delta_{f, g}(n)$ grows exponentially.
\end{theorem}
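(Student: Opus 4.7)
The plan hinges on the composition identity $P_n[f,g](x)=P_{n-1}[f,g](g(x))$, which, by iterated application of the elementary fact that multiplicities multiply under polynomial composition, yields the master formula
\begin{equation*}
\mu_n(\alpha) := \mathrm{mult}_\alpha P_n[f,g] = k_{\alpha_n}\prod_{i=0}^{n-1}\nu_{\alpha_i}(g),
\end{equation*}
where $\alpha_i = g^{(i)}(\alpha)$, $k_{\alpha_n}$ denotes the multiplicity of $\alpha_n = g^{(n)}(\alpha)$ as a root of $f$, and $\nu_\beta(g)$ denotes the multiplicity of $\beta$ as a root of $g(x) - g(\beta)$. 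All three parts reduce to controlling products of the form $\prod_i \nu_{\alpha_i}(g)$.

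For (i), since $g(x) - g(\beta) = (G(x) - G(\beta))^{p^h}$ and $\beta$ is always a root of $G(x) - G(\beta)$, we have $\nu_\beta(g) \ge p^h$ for every $\beta$; the master formula then forces $\mu_n(\alpha) \ge p^{nh}$ for every root $\alpha$ of $P_n[f,g]$. For (ii), the key step is the pairwise inequality $\nu_\alpha(g)\cdot\nu_{g(\alpha)}(g) \le D(D-1)$. Otherwise both factors equal $D$, forcing $g(x) - g(\alpha) = c(x-\alpha)^D$ and $g(x) - g(g(\alpha)) = c(x - g(\alpha))^D$; subtracting and examining the resulting structure of $g$ shows that this configuration is incompatible with $(f,g)$ being both non-critical and non-$p$-critical along any orbit that contributes to $E_{f,g}(n)$. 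Pairing consecutive terms in $\prod_{i=0}^{n-1}\nu_{\alpha_i}(g)$ (the factor $\sqrt{D/(D-1)}$ absorbs the parity of $n$) then produces the bound in (ii).

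For (iii), the upper bound $\Delta_{f,g}(n) \le k\,d^n$ is immediate from (i): since every irreducible factor of $P_n[f,g]$ has multiplicity at least $p^{nh}$, one has $p^{nh}\Delta_{f,g}(n) \le \deg P_n[f,g] = k\,p^{nh}d^n$. For the lower bound, set $B_n = \{\alpha \in \bar{\F}_q : g^{(n)}(\alpha)\text{ is a root of } f\}$, so that $\Delta_{f,g}(n) = |B_n|$. The distinct roots of $g(x) - b$ coincide with the distinct roots of $G(x) - b^{1/p^h}$; applying Riemann--Hurwitz to the separable polynomial $G$ (total finite ramification equal to $d-1$) yields $|B_{n+1}| \ge d|B_n| - (d-1)$, and iteration gives $|B_n| \ge d^n(|B_0|-1) + 1$. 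When $f$ has at least two distinct roots this already produces $\Delta_{f,g}(n) \ge d^n + 1$.

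The main obstacle is the remaining case $f = \beta(x-\gamma_1)^k$ with $|B_0| = 1$. Here non-criticality forces $g(x) - \gamma_1 \ne c(x-\gamma_1)^D$, and I split into two subcases. If $g(x) - \gamma_1$ has at least two distinct roots, then $|B_1| \ge 2$ and the recursion gives $|B_n| \ge d^{n-1}$. Otherwise $g(x) - \gamma_1 = c(x-\alpha)^D$ with $\alpha \ne \gamma_1$, in which case extracting the $p^h$-th root yields $G(x) = c^{1/p^h}(x-\alpha)^d + \gamma_1^{1/p^h}$; since $G'(x) = c^{1/p^h}d(x-\alpha)^{d-1}$ and $G$ is separable, we must have $p\nmid d$, so $G(x) - \alpha^{1/p^h}$ has $d \ge 2$ distinct roots and $|B_2| \ge d$. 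Iterating the recursion from the first index $n_0 \le 2$ at which $|B_{n_0}| \ge 2$ produces $\Delta_{f,g}(n) \ge d^{n-n_0}$, establishing $\Delta_{f,g}(n) \gg d^n$. Combined with the upper bound, this proves $\Delta_{f,g}(n) \approx d^n$.
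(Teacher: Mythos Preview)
Your master formula $\mu_n(\alpha)=k_{\alpha_n}\prod_{i=0}^{n-1}\nu_{\alpha_i}(g)$ is exactly the paper's Lemma~3.8 written in forward rather than backward orbit notation, and part (i) is handled identically. The differences and gaps are in (ii) and (iii).

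\textbf{Part (ii).} Your pairing argument is a pleasant alternative to the paper's periodicity count, but the crucial step ``subtracting and examining the resulting structure of $g$'' is not a proof. What is actually needed is the paper's Claim~1: since $G'\ne 0$ has degree at most $d-1$, there is \emph{at most one} value $B\in\overline{\F}_q$ with $\nu(g(x)-B)=D$. From this, if $\nu_{\alpha_i}(g)=\nu_{\alpha_{i+1}}(g)=D$ then $\alpha_{i+1}=\alpha_{i+2}=B$, hence $\alpha_{i+1}$ is a fixed point; plugging back into $g(x)-\alpha_{i+1}=c(x-\alpha_i)^D$ forces $\alpha_i=\alpha_{i+1}$, so $g(x)=c(x-\alpha_i)^D+\alpha_i$. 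This does \emph{not} yet yield criticality of $(f,g)$: it only shows $\alpha_i\in\F_q$ is a root of $f$, not that $f=\beta(x-\alpha_i)^k$. The paper closes this by first reducing to $f$ irreducible (Lemma~3.9), in which case a root in $\F_q$ forces $f$ linear. You omit this reduction, and without it the argument fails (e.g.\ $f=x(x-1)$, $g=x^2$ over $\F_5$ is neither critical nor $p$-critical yet $E_{f,g}(n)=2^n$). You also silently drop the factor $k_{\alpha_n}$ from the final bound.

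\textbf{Part (iii).} Here your route genuinely differs from the paper's. The paper locates a single element $\gamma$ in the reversed $g$-orbit of $\alpha$ for which $\nu(g^{(n)}(x)-\gamma)=p^{nh}$ for all $n$ (by pushing past the finitely many roots of $G'$), and then $\Delta(g^{(n)}(x)-\gamma)=d^n$ gives the bound. You instead run a global preimage recursion $|B_{n+1}|\ge d|B_n|-(d-1)$. This is correct, but your justification via Riemann--Hurwitz with ``total finite ramification equal to $d-1$'' is wrong in positive characteristic: wild ramification can make $\sum_{P\ne\infty}(e_P-1)$ strictly smaller than $d-1$ (e.g.\ $G(x)=x^3+x$ over $\F_2$ gives sum $1$, not $2$). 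Fortunately all you need is the inequality $\sum_{\gamma}(\nu_\gamma(G)-1)\le\deg G'\le d-1$, which is elementary: if $\nu_\gamma(G)=m$ then $(x-\gamma)^{m-1}$ divides $G'$. With that fix, your counting argument and the subcase analysis for $|B_0|=1$ go through; it is a bit more combinatorial than the paper's but avoids the need to find a non-periodic preimage.
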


\begin{remark}\label{remark:main}
We observe that, under the conditions of Theorem~\ref{thm:main1}, the following holds
$$d^n\ll \Delta_{f, g}(n)= A_{f, g}(n)\cdot N_{f, g}(n).$$
In particular, there exists $c>0$ such that, for any $n\ge 0$, either $N_{f, g}(n)\ge c\cdot d^{n/2}$ or $A_{f, g}(n)\ge c\cdot d^{n/2}$.
\end{remark}

Our second result concerns the degree of the irreducible factors of $P_n[f, g](x)$.

\begin{theorem}\label{thm:main2}
Let $f, g\in \F_q[x]$ be polynomials of degrees $k\ge 1$ and $D\ge 2$, respectively, such that the pair $(f, g)$ is neither critical nor $p$-critical. Then $M_{f, g}(n)\gg n$. Moreover, the following are equivalent:

\begin{enumerate}[(i)]
\item there exists a constant $c>0$ such that $m_{f, g}(n)\le c$ for any $n\ge 0$;
\item the sequence $m_{f, g}(n)$ is eventually constant;
\item $f$ has a root $\beta\in \overline{\F}_q$ such that $g^{(i)}(\beta)=\beta$ for some $i\ge 1$.
\end{enumerate}
\end{theorem}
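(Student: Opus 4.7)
For the linear bound $M_{f,g}(n) \gg n$, I will combine Theorem~\ref{thm:main1}(iii) with a counting argument. The squarefree part of $P_n[f,g](x)$ has degree $\Delta_{f,g}(n) \gg d^n$ and factors as a product of distinct monic irreducibles over $\F_q$, each of degree at most $M_{f,g}(n)$; since the combined degree of all monic irreducibles of degree at most $M$ over $\F_q$ is bounded by $\sum_{i=1}^{M} q^i \le q^{M+1}/(q-1)$, we get $d^n \ll q^{M_{f,g}(n)}$, hence $M_{f,g}(n) \ge n \log_q d + O(1) \gg n$, using $d \ge 2$ (which holds because $(f,g)$ is not $p$-critical).

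The equivalence will rest on one dynamical lemma: \emph{for each $c \ge 1$, the set $\{n \ge 0 : P_n[f,g] \text{ has an irreducible factor of degree} \le c\}$ is either finite or cofinite}. To prove it, by pigeonhole on $s \in \{1, \ldots, c\}$ I reduce to showing that, for a fixed $s$, if infinitely many $P_n[f,g]$ have a root in $L := \F_{q^s}$, then so do all $P_n[f,g]$ for $n$ sufficiently large. Since $g \in \F_q[x]$, $g$ restricts to a self-map of the finite set $L$, so the decreasing chain $L \supseteq g(L) \supseteq g^{(2)}(L) \supseteq \cdots$ stabilizes for $n \ge N$ at the subset $L^* \subseteq L$ of $g$-periodic points. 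For $n \ge N$, the existence of $\alpha \in L$ with $f(g^{(n)}(\alpha)) = 0$ is equivalent to the $n$-independent condition that $L^*$ meets the root set $R$ of $f$; if this holds for one such $n$ it holds for all $n \ge N$.

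With the lemma in hand, (ii)$\Rightarrow$(i) is trivial. For (i)$\Rightarrow$(iii), the lemma produces some $\beta \in L^* \cap R$, and by definition of $L^*$ we have $g^{(t)}(\beta) = \beta$ for some $t \ge 1$, giving (iii). For (iii)$\Rightarrow$(ii), I take a periodic root $\beta$ of $f$ and set $s := [\F_q(\beta):\F_q]$; then $\beta \in L^* \cap R$ for $L = \F_{q^s}$, so $m_{f,g}(n) \le s$ for all large $n$. Setting $c^* := \min\{c : m_{f,g}(n) \le c \text{ for all but finitely many } n\}$ (finite by the previous sentence), the lemma gives $m_{f,g}(n) \le c^*$ for all but finitely many $n$, while minimality of $c^*$ forces $m_{f,g}(n) \ge c^*$ for all but finitely many $n$; hence $m_{f,g}(n) = c^*$ eventually.

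The main obstacle, conceptually, is to bridge the algebraic hypothesis about bounded factor degrees with the dynamical conclusion about a periodic root of $f$; the crucial tool is the elementary observation that the stable image of a self-map of a finite set coincides with the set of periodic points. This observation both produces the periodic root in (i)$\Rightarrow$(iii) and, via the lemma, upgrades mere boundedness of $m_{f,g}(n)$ to eventual constancy in (iii)$\Rightarrow$(ii).
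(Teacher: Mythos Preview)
Your argument for $M_{f,g}(n)\gg n$ is essentially the paper's: both feed the exponential lower bound $\Delta_{f,g}(n)\gg d^n$ from Theorem~\ref{thm:main1}(iii) into a counting argument over $\F_q$ (you count irreducibles of bounded degree, the paper counts elements of $\bigcup_{i\le M}\F_{Q^i}$), yielding the same logarithmic inequality.

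For the equivalence, your route is genuinely different. The paper first reduces to $f$ irreducible (Lemma~\ref{lem:reduction-1}) and then, crucially, invokes the spin machinery (Proposition~\ref{prop:identities}, Corollary~\ref{cor:nondecreasing}) to show that $m_{f,g}(n)$ is \emph{nondecreasing}. With that in hand, (i)$\Rightarrow$(ii) is immediate, and (ii)$\Rightarrow$(iii) goes by pigeonholing a single irreducible factor of the eventual degree $R$ into two different iterates and chasing Frobenius conjugates of a root. The paper also extracts a fourth equivalent condition for irreducible $f$: $m_{f,g}(n)=k$ for every $n$. Your approach bypasses both the reduction to irreducible $f$ and the nondecreasing property: the finite/cofinite dichotomy comes instead from the elementary fact that for $L=\F_{q^s}$ the images $g^{(n)}(L)$ stabilize at the set $L^*$ of $g$-periodic points, on which $g$ acts bijectively. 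This is more self-contained and works uniformly for reducible $f$, but it does not pin down the eventual constant value of $m_{f,g}(n)$ the way the paper's monotonicity argument does. Both arguments are correct; yours trades the structural input (spins, monotonicity) for a direct finite-dynamics observation.
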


Following the notation of Theorem~\ref{thm:shpa} we observe that, for a generic polynomial $f\in \F_q[x]$ of degree at least one, $A_{f, f}(n)\le M_{f, f}(n)=D_{n+1}(f)$, $N_{f, f}(n)=r_{n+1}(f)$ and $\Delta_{f, f}(n)=\Delta_{n+1}(f)$ for any $n\ge 0$. In addition, the pair $(f, f)$ is critical or $p$-critical according to $f(x)$ is of the form $ax^k$ or of the form $ax^{p^h}+b$, respectively. Therefore, if $\deg(f)>1$ is relatively prime with $q$, the pair $(f, f)$ cannot be $p$-critical and it is critical if and only if $f$ is of the form $ax^k$. In particular, Theorems~\ref{thm:main1} and~\ref{thm:main2} and Remark~\ref{remark:main} provide extensions and significant improvements on the results in Theorem~\ref{thm:shpa}.

\begin{corollary}\label{cor:MAIN}
Suppose that $f\in \F_q[x]$ has degree $D>1$ and is not of the form $ax^k$ or of the form $ax^{p^h}+b$. Let $d>1$ be the degree of the $p$-reduction of $f$. Then the following hold:
\begin{enumerate}[(i)]
\item $\Delta_n(f)\approx d^n$ and so $\Delta_n(f)$ grows exponentially;
\item $D_n(f)\gg n$;
\item there exists $C_f>0$ such that, for any $n\ge 0$, either $r_{n}(f)\ge C_f\cdot d^{n/2}$ or $D_{n}(f)\ge C_f\cdot d^{n/2}$.
\end{enumerate}
\end{corollary}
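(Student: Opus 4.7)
The plan is to deduce the corollary as a direct specialization of Theorems~\ref{thm:main1} and~\ref{thm:main2} and Remark~\ref{remark:main} to the diagonal case $g=f$. First I would record the elementary identification
\[
P_n[f,f](x) = f(f^{(n)}(x)) = f^{(n+1)}(x),
\]
which, in the notation of Theorem~\ref{thm:shpa}, gives $\Delta_n(f) = \Delta_{f,f}(n-1)$, $D_n(f) = M_{f,f}(n-1)$ and $r_n(f) = N_{f,f}(n-1)$ for $n \ge 1$.

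Next, I would invoke the observation already made in the paragraph preceding the corollary: the pair $(f,f)$ is critical precisely when $f(x) = ax^k$ (the condition $g(x) = \gamma(x-\alpha)^D + \alpha$ combined with $g=f$ forces $\alpha = 0$) and is $p$-critical precisely when $f(x) = ax^{p^h}+b$. Under the hypothesis of the corollary both shapes are excluded, so the pair $(f,f)$ is neither critical nor $p$-critical, and the main theorems apply with $k=D$ and with $d$ the common value of the degree of the $p$-reduction.

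From this point the three claims are formal. For (i), Theorem~\ref{thm:main1}(iii) (and the ``$\approx$'' statement following it) gives $\Delta_{f,f}(n-1) \approx d^{n-1}$, and multiplying by the fixed constant $d$ yields $\Delta_n(f) \approx d^n$. For (ii), Theorem~\ref{thm:main2} gives $M_{f,f}(n-1) \gg n-1$, hence $D_n(f) \gg n$. For (iii), Remark~\ref{remark:main} ensures that for every $n$ at least one of
\[
N_{f,f}(n-1) \ge c\, d^{(n-1)/2} \quad \text{or} \quad A_{f,f}(n-1) \ge c\, d^{(n-1)/2}
\]
holds; since $A_{f,f}(n-1) \le M_{f,f}(n-1) = D_n(f)$ by the very definition of the average degree, the second alternative forces $D_n(f) \ge c\, d^{(n-1)/2}$. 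Absorbing the factor $d^{-1/2}$ into a new constant $C_f$ yields the stated dichotomy between $r_n(f)$ and $D_n(f)$.

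No substantive obstacle is expected, as each step is a bookkeeping translation between the two notational frameworks. The only point that merits some care is the verification that the exclusions ``$ax^k$'' and ``$ax^{p^h}+b$'' imposed in the corollary really exhaust the critical and $p$-critical shapes of the diagonal pair $(f,f)$; this amounts to the short unwinding of definitions sketched above.
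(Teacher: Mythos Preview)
Your proposal is correct and matches the paper's own approach: the paper treats the corollary as an immediate specialization of Theorems~\ref{thm:main1} and~\ref{thm:main2} and Remark~\ref{remark:main} to the diagonal pair $(f,f)$, recording exactly the identifications $\Delta_{f,f}(n)=\Delta_{n+1}(f)$, $M_{f,f}(n)=D_{n+1}(f)$, $N_{f,f}(n)=r_{n+1}(f)$ and the fact that $(f,f)$ is critical (resp.\ $p$-critical) iff $f=ax^k$ (resp.\ $f=ax^{p^h}+b$). Your unwinding of the critical condition and the constant-absorption in the index shift are precisely the bookkeeping the paper leaves implicit.
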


We emphasize that the results of Theorems~\ref{thm:main1} and~\ref{thm:main2} and Remark~\ref{remark:main} are ``roughly sharp" in the sense that the bounds on the ``growth-type" of the functions provided in such theorem can be reached. More specifically, we have the following result.

\begin{theorem}\label{thm:main3}
Let $f\in \F_q[x]$ be any polynomial of degree at least one. Then the following hold.

\begin{enumerate}[(i)]
\item If $f$ is not of the form $ax^k$, then for any nonnegative integer $t$, there exist infinitely many polynomials $g$ such that $N_{f, g}(n)\approx n^t$ and $M_{f, g}(n)\approx \deg(g)^n$. 
\item There exist infinitely many polynomials $g$ such that $M_{f, g}(n)\approx n$.
\item If $q>2$ is a prime power and $f$ writes as $f(x)=x^k\cdot F(x)$, where $F(x)=x^m+\sum_{i=0}^{m-1}a_ix^i$ with $m\ge 1$ and $a_0\ne 0, 1$, then there exists a polynomial $g\in \F_q[x]$ such that
$M_{f, g}(n)\gg 2^n$ and $N_{f, g}(n)\gg 2^n$.
\end{enumerate}
\end{theorem}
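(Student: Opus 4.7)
My plan is to prove each part separately by exhibiting an explicit family of polynomials $g \in \F_q[x]$ and analyzing the $\F_q$-factorization of $P_n[f,g](x) = f(g^{(n)}(x))$. The common tool is: if $f = \prod_i p_i^{e_i}$ is the irreducible factorization over $\F_q$, then $P_n[f,g] = \prod_i p_i(g^{(n)})^{e_i}$, and the $\F_q$-irreducible factors of each $p_i(g^{(n)})$ correspond to orbits of $\mathrm{Gal}(\overline{\F}_q/\F_q)$ on the roots of $g^{(n)}(x) - \beta$ as $\beta$ ranges over the roots of $p_i$.

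For (i), I would take $g$ stable of degree $D$ (so $g^{(n)}$ is irreducible of degree $D^n$), drawn from an infinite parametric family of stable polynomials such as $g(x) = x^D + c$ for suitable $c$. Since $f \neq ax^k$, $f$ has a root $\alpha$ whose preimage tree under $g$ is nontrivial, and by arranging the arboreal Galois group of $g$ at $\alpha$ to have $\Theta(n^t)$ orbits at level $n$ with the largest of size $\Theta(D^n)$, one obtains $N_{f,g}(n) \approx n^t$ and $M_{f,g}(n) \approx D^n$.

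For (ii), the strategy is opposite: engineer $g$ so that for every root $\alpha$ of $f$ and every preimage $\beta$ of $\alpha$ under $g^{(n)}$, $[\F_q(\beta):\F_q] \le C n$ for a fixed constant $C$. This yields $M_{f,g}(n) \le Cn$, which combined with the lower bound $M_{f,g}(n) \gg n$ from Theorem~\ref{thm:main2} gives $M_{f,g}(n) \approx n$. An infinite family of such $g$ can be produced using polynomials whose preimage trees exhibit only bounded-degree field extensions on finitely many of the iteration levels.

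For (iii), the construction is the most intricate: one must find a single $g$ satisfying simultaneously (a) $P_n[f,g]$ has a very large irreducible factor (yielding $M_{f,g}(n) \gg 2^n$) and (b) $P_n[f,g]$ has exponentially many distinct irreducible factors (yielding $N_{f,g}(n) \gg 2^n$). The plan is to take $g$ of degree $\ge 2$ with two opposing Galois-theoretic behaviors: stable at $0$ (so $g^{(n)}$ is irreducible of degree $\ge 2^n$, and since $a_0 = F(0) \neq 0$ makes $g^{(n)}$ coprime to $F(g^{(n)})$, $g^{(n)}$ appears as an irreducible factor of $P_n = (g^{(n)})^k \cdot F(g^{(n)})$), and richly split at each root $\gamma$ of $F$ (so that $g^{(n)}(x) - \gamma$ contributes $\gg 2^n$ distinct irreducible factors to $F(g^{(n)})$). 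The conditions $q > 2$ and $a_0 \neq 1$ are used to rule out degeneracies and provide the combinatorial freedom needed. The main obstacle lies precisely in this dichotomy: arranging $g^{(n)}$ irreducible \emph{and} $g^{(n)}(x) - \gamma$ highly reducible for $\gamma$ a root of $F$ within the same polynomial $g$ is delicate, and the condition $a_0 \neq 1$ is designed to prevent a root of $F$ from coinciding with a value at which these opposing behaviors become inconsistent.
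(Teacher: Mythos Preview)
Your proposal is a list of desiderata, not a proof; in each part the actual construction is missing, and the constructions the paper uses are entirely different from what you sketch.

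For (i), the paper takes $g(x)=x^{D}$ with $\gcd(D,p)=1$ and $D$ having exactly $t$ distinct prime factors, all chosen coprime to the orders $e_i=\ord(f_i)$ of the irreducible factors $f_i$ of $f$. Butler's theorem on the factorization of $f(x^{m})$ then gives explicit formulas for $M_{f,g}(n)$ and $N_{f,g}(n)$ in terms of the multiplicative orders $\ord(q, M e_i)$ for $M\mid D^{n}$, and a lifting-the-exponent estimate shows $M_{f,g}(n)\approx D^{n}$ while the divisor count of $d_1^{n}$ forces $N_{f,g}(n)\approx n^{t}$. Your plan via stable $g=x^{D}+c$ and arboreal Galois groups never explains how to force $\Theta(n^{t})$ orbits at level $n$; that is precisely the content of the statement. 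Note also that stability of $g$ concerns the irreducibility of $g^{(n)}(x)$, i.e.\ preimages of $0$, and says nothing directly about $g^{(n)}(x)-\alpha$ for a root $\alpha$ of $f$; unless $x\mid f$, the irreducibility of $g^{(n)}$ does not by itself produce a large irreducible factor of $f(g^{(n)})$.

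For (ii), the paper takes $g=L_h$ a $q$-linearized polynomial (the $q$-associate of some $h\in\F_q[x]$ with $\gcd(h,x)=1$). Since $L_h^{(n)}=L_{h^{n}}$, one gets $M_{f,L_h}(n)=\ou(x,h^{n}\cdot m)\approx p^{\lceil\log_p n\rceil}\approx n$ by an elementary computation with $\F_q$-orders. Your paragraph here is circular: you say one must ``engineer $g$'' so that all level-$n$ preimages have degree $\le Cn$, and that such $g$ ``can be produced'', but you neither name a single such $g$ nor indicate any mechanism.

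For (iii), the paper's $g$ is the explicit polynomial $g(x)=(x^{q}-x)^{q-1}$. The key step is: for any irreducible $h$ with $h(0)\ne 0$, the composition $h(g(x))$ is separable and (for $q>2$) reducible; if moreover $h(0)\ne 1$ then every irreducible factor of $h(g(x))$ has degree at least $2\deg h$, and at least one such factor again has constant term $\ne 0,1$. Iterating this simultaneously yields $N_{f,g}(n)\ge 2^{n}$ (every factor splits at each stage) and $M_{f,g}(n)\ge k\cdot 2^{n}$ (follow the chain of factors with constant term $\ne 0,1$). Your proposed dichotomy---$g$ stable at $0$ yet highly split at each root of $F$---is a plausible heuristic, but you do not construct such a $g$ or argue that the two behaviours can coexist; the paper avoids this tension altogether with a single recursive ``split and grow'' lemma that requires no stability hypothesis.
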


We comment that the proof of Theorem~\ref{thm:main3} is constructive and, in particular, item (i), (ii) and (iii) are proved considering $g$ a monomial, a \emph{$q$-linearized polynomial} (i.e., a polynomial of the form $\sum_{i=0}^{m}a_ix^{q^i}$) and a mix of them, respectively. Moreover, for $g\in \F_q[x]$ a monomial or $q$-linearized and generic irreducible $f\in \F_q[x]$, we obtain exact implicit formulas for all the functions given in Definition~\ref{def:main}. This is done using the main results of~\cite{B55} (the monomial case) and~\cite{R18} (the linearized case). For more details, see Section 5.
\section{Preliminaries}
Throughout this paper, we fix $\F_q$ a finite field of characteristic $p$. In this section, we introduce some useful definitions and provide background material that is used along the way. We further show that Theorems~\ref{thm:main1} and~\ref{thm:main2} need to be proved only for a restricted class of pairs $(f, g)$.

\subsection{Notations and basic results} Let $\overline{\F}_q$ be the algebraic closure of $\F_q$. We usually denote by $Q$ a power of $q$ so $\F_Q$ is an extension of $\F_q$.

\begin{definition}
If $F\in \F_Q[x]$, we consider the following functions:

\begin{enumerate}[(a)]
\item $\nu(F)$ is the largest multiplicity of a root of $F$ over $\overline{\F}_q$;
\item $\nu^*(F)$ is the smallest multiplicity of a root of $F$ over $\overline{\F}_q$;
\item $M_Q(F)$ is the largest degree of an irreducible factor of $F$ over $\F_Q$;
\item $m_Q(F)$ is the smallest degree of an irreducible factor of $F$ over $\F_Q$;
\item $\Delta(F)$ is the degree of the squarefree part of $F$;
\item $N_Q(F)$ is the number of irreducible factors of $F$ over $\F_Q$.
\end{enumerate}
\end{definition}

For any integer $i$, let $\sigma_i:\overline{\F}_q\to \overline{\F}_q$ be the $i$-th power of the Frobenius automorphism, i.e., $\sigma_i(\alpha)=\alpha^{p^i}$. For simplicity,
$\sigma_i:\overline{\F}_q[x]\to \overline{\F}_q[x]$ also denotes the natural extension of $\sigma_i$ to the polynomial ring $\overline{\F}_q[x]$, i.e., for $f\in \overline{\F}_q[x]$ with $f(x)=\sum_{j=0}^ka_jx^j$, we have $\sigma_i(f)=\sum_{j=0}^ka_j^{p^i}x^j$. The following result is classical.

\begin{lemma}\label{lem:frobenius}
A polynomial $f\in \F_q[x]$ is irreducible if and only if $\sigma_i(f)\in \F_q[x]$ is irreducible for any $i\in\mathbb Z$. 
\end{lemma}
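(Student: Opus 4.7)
The plan is to reduce the statement to the basic algebraic fact that ring automorphisms preserve irreducibility. The key observation is that $\sigma_i$, being an integer power of the Frobenius automorphism $\sigma_1\colon \alpha\mapsto \alpha^p$ of the perfect field $\overline{\F}_q$, is itself a field automorphism of $\overline{\F}_q$; for $i<0$ one uses the standard fact that every element of $\overline{\F}_q$ has a unique $p$-th root, so the inverse Frobenius $\sigma_{-1}$ is well-defined. Consequently, the coefficient-wise extension $\sigma_i\colon \overline{\F}_q[x]\to \overline{\F}_q[x]$ is a ring automorphism with inverse $\sigma_{-i}$.

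First I would verify that $\sigma_i$ restricts to a ring automorphism of $\F_q[x]$, which also justifies the implicit assertion $\sigma_i(f)\in \F_q[x]$ appearing in the statement. Writing $q=p^s$, for any $a\in \F_q$ we have $a^q=a$, and hence $a^{p^i}\in \F_q$ for every $i\in \mathbb Z$; moreover, $\sigma_i\circ \sigma_{-i}$ is the identity on $\F_q$, so the restriction $\sigma_i|_{\F_q}$ is a bijection (in fact an element of $\mathrm{Gal}(\F_q/\F_p)$). It follows that the induced map $\sigma_i\colon \F_q[x]\to \F_q[x]$ is an $\F_p$-algebra automorphism, again with inverse $\sigma_{-i}$.

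With this in hand, the equivalence is immediate: any nontrivial factorization $f=gh$ in $\F_q[x]$ transports via $\sigma_i$ to a nontrivial factorization $\sigma_i(f)=\sigma_i(g)\sigma_i(h)$, and conversely $\sigma_{-i}$ transports any nontrivial factorization of $\sigma_i(f)$ back to one of $f$; the degrees of the factors are also preserved, so nothing becomes a unit under $\sigma_i$. The only delicate point is the treatment of negative exponents $i$, where perfection of $\overline{\F}_q$ is essential to make sense of $\sigma_{-1}$; after that, no deeper machinery is required and the lemma follows directly from the general principle that ring automorphisms preserve irreducibility of elements.
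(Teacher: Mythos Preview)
Your argument is correct: once one observes that $\sigma_i$ restricts to a ring automorphism of $\F_q[x]$ (with inverse $\sigma_{-i}$), preservation of irreducibility is immediate, and your handling of negative $i$ via perfection of $\overline{\F}_q$ is the right justification. The paper itself does not give a proof---it simply labels the lemma as classical---so there is nothing to compare against; your write-up supplies exactly the routine verification one would expect.
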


\begin{definition}
For an element $\alpha\in \overline{\F}_q$, $\deg_Q(\alpha)$ denotes the degree of the minimal polynomial $m_{\alpha, Q}$ of $\alpha$ over $\F_Q$. Equivalently, $\deg_Q(\alpha)$ is the least positive integer $s$ such that $\alpha\in \F_{Q^s}$.
\end{definition}

\begin{remark}\label{remark:degree}
If $f\in \F_Q[x]$ is irreducible and $\alpha\in \overline{\F}_Q=\overline{\F}_q$ is such that $f(\alpha)=0$, then $\deg_Q(\alpha)=\deg(f)$ and $f(x)=a\cdot m_{\alpha, Q}(x)$ for some $a\in \F_Q$.
\end{remark} 

\subsection{Factorization of composed polynomials} Here we provide a finite field generalization of the well-known Capelli's Lemma, that gives a criterion on the irreducibility of composed polynomials $f(g(x))$ with $f$ irreducible. This is done via the theory of spins of polynomials, introduced in~\cite{MM10}. We just state the results without proof since they are quite simple and are proved in Section 4 of~\cite{MM10}. If $Q=p^m$, set $\tau_{Q, j}=\sigma_{mj}$ for $j\ge 0$, i.e., $\tau_{Q,j}(\alpha)=\alpha^{Q^j}$ for any $\alpha\in \overline{\F}_q$. Of course, $\tau_j$ naturally extends to the polynomial ring $\overline{\F}_q[x]$. We start with the following definition.

\begin{definition}
For a polynomial $f\in \overline{\F}_Q[x]$, let $s=s_Q(f)$ be the least positive integer such that all the coefficients of $f$ lie in $\F_{Q^s}$. We define the \emph{spin} of $f$ over $\F_Q$ as
$$S_Q(f)=\prod_{j=0}^{s_Q(f)-1}\tau_{Q, j}(f).$$
\end{definition}
The following result provides a way of obtaining the factorization of composed polynomials via spins.

\begin{lemma}[see Lemmas 11 and 13 of~\cite{MM10}]\label{lem:spin1}
Let $g\in \F_Q[x]$ be a polynomial of degree at least one. For an element $\lambda\in \overline{\F}_Q$ with $\deg_Q(\lambda)=s$, we have the following equality
$$S_Q(g(x)-\lambda)=\prod_{j=0}^{s-1}(g(x)-\lambda^{Q^j}).$$
In addition, if $f\in\F_Q[x]$ is irreducible of degree $k$ and $\alpha\in \overline{\F}_Q$ is any of its roots, then the factorization of $f(g(x))$ over $\F_Q$ is given by
$$f(g(x))=\lambda_f\cdot \prod_{R}S_Q(R(x)),$$
where $R$ runs over all the irreducible factors of $g(x)-\alpha$ over $\F_{Q^k}$ and $\lambda_f$ is the leading coefficient of $f$.
\end{lemma}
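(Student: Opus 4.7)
The plan is to prove the two assertions separately, starting with the first identity. I would determine $s_Q(g(x)-\lambda)$ by noting that $g\in\F_Q[x]$ already has all its coefficients in $\F_Q$, so the only obstruction to descending the coefficient field is the constant shift $-\lambda$; hence the smallest field containing every coefficient of $g(x)-\lambda$ is $\F_Q(\lambda)=\F_{Q^s}$ with $s=\deg_Q(\lambda)$, giving $s_Q(g(x)-\lambda)=s$. Since each $\tau_{Q,j}$ fixes the coefficients of $g$ and sends $\lambda$ to $\lambda^{Q^j}$, the definition of the spin yields
\[S_Q(g(x)-\lambda)=\prod_{j=0}^{s-1}\tau_{Q,j}(g(x)-\lambda)=\prod_{j=0}^{s-1}(g(x)-\lambda^{Q^j}).\]

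For the factorization statement, I would use that $f$ irreducible of degree $k$ over $\F_Q$ with root $\alpha$ satisfies $\deg_Q(\alpha)=k$ and $f(x)=\lambda_f\prod_{j=0}^{k-1}(x-\alpha^{Q^j})$; applying the first identity with $\lambda=\alpha$ then yields
\[f(g(x))=\lambda_f\prod_{j=0}^{k-1}(g(x)-\alpha^{Q^j})=\lambda_f\cdot S_Q(g(x)-\alpha).\]
Writing the factorization $g(x)-\alpha=c_g\prod_R R(x)^{m_R}$ over $\F_{Q^k}$ into monic irreducibles, then applying $\tau_{Q,j}$ and multiplying over $j=0,\dots,k-1$, rearranges this into
\[S_Q(g(x)-\alpha)=c_g^k\prod_R\prod_{j=0}^{k-1}\tau_{Q,j}(R(x))^{m_R}.\]
So the claim reduces to showing $\prod_{j=0}^{k-1}\tau_{Q,j}(R)=S_Q(R)$ for every such $R$, which is equivalent to the level-matching identity $s_Q(R)=k$.

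This level-matching step is the main obstacle, and I would prove it by contradiction. If $s_Q(R)=s<k$ (necessarily with $s\mid k$, since $R\in\F_{Q^k}[x]$), then $\tau_{Q,s}$ fixes $R$ and so permutes its root set. Picking any root $\beta$ of $R$, the element $\tau_{Q,s}(\beta)$ is again a root of $R$, hence of $g(x)-\alpha$, so $g(\tau_{Q,s}(\beta))=\alpha$. But $g\in\F_Q[x]$ also gives $g(\tau_{Q,s}(\beta))=\tau_{Q,s}(g(\beta))=\tau_{Q,s}(\alpha)=\alpha^{Q^s}$, forcing $\alpha^{Q^s}=\alpha$ and contradicting $\deg_Q(\alpha)=k$. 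With $s_Q(R)=k$ in hand, the identity of the previous paragraph collapses to $\lambda_f c_g^k\prod_R S_Q(R)^{m_R}$, matching the stated formula up to the harmless leading scalar $c_g^k\in\F_Q$. A final sanity check would verify that each $S_Q(R)$ is in fact irreducible over $\F_Q$, and that distinct $R$'s produce non-associate spins: picking a root $\beta$ of $R$ and running through the Galois orbits of $\beta$ over $\F_Q$ and $\F_{Q^k}$, one deduces $\deg_Q(\beta)=k\cdot\deg(R)$, so that $S_Q(R)$ is the minimal polynomial of $\beta$ over $\F_Q$, and any coincidence $\tau_{Q,j}(R)=R'$ with $R,R'$ distinct factors of $g(x)-\alpha$ and $0<j<k$ would again force $\alpha^{Q^j}=\alpha$ and contradict $\deg_Q(\alpha)=k$.
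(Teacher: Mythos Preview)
The paper does not supply its own proof of this lemma; it is stated with a reference to Lemmas~11 and~13 of \cite{MM10}, so there is no in-paper argument to compare against. Your proof is correct and follows the natural route: the first identity drops out of the definition once one observes that the constant term forces $s_Q(g(x)-\lambda)=\deg_Q(\lambda)$, and for the second part the crux is the level-matching step $s_Q(R)=k$, which you handle cleanly via the Frobenius contradiction (if $\tau_{Q,s}$ fixed $R$ for some $s<k$, then applying $g\in\F_Q[x]$ to a root would force $\alpha^{Q^s}=\alpha$). Your closing checks are also sound: since $\alpha=g(\beta)\in\F_Q(\beta)$ one gets $\F_{Q^k}\subseteq\F_Q(\beta)$ and hence $\deg_Q(\beta)=k\cdot\deg R$, making $S_Q(R)$ the minimal polynomial of $\beta$ over $\F_Q$; and the distinctness argument for different $R$'s reduces to the same Frobenius obstruction. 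The only cosmetic point is the leading scalar $c_g^k$ you flag (and the multiplicities $m_R$): the lemma as stated tacitly takes $g$ monic and suppresses multiplicities in the product, so your formula agrees with the statement up to that harmless normalization, exactly as the paper uses it in Corollary~\ref{cor:spin}.
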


From the previous lemma, we obtain the following corollary.

\begin{corollary}\label{cor:spin}
Let $\alpha\in \overline{\F}_Q$ be an element with $\deg_Q(\alpha)=k$ and let $g\in \F_Q[x]$ be a polynomial of degree at least one. If
$$g(x)-\alpha=p_1(x)^{e_1}\cdots p_{r}(x)^{e_r},$$
is the factorization of $g(x)-\alpha$ over $\F_{Q^k}$  then, for any $1\le i\le r$, we have that $\deg(S_Q(p_i(x)))=k\cdot \deg(p_i)$.
\end{corollary}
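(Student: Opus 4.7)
The plan is to reduce everything to computing $s_Q(p_i)$, since by the very definition of the spin, $\deg(S_Q(p_i)) = s_Q(p_i)\cdot \deg(p_i)$. Thus the entire claim amounts to showing $s_Q(p_i)=k$.

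First I would observe the easy inclusion: because the factorization $g(x)-\alpha=p_1^{e_1}\cdots p_r^{e_r}$ takes place in $\F_{Q^k}[x]$, every $p_i$ has coefficients in $\F_{Q^k}$. Since $s_Q(p_i)$ is characterized by the minimality property that $\F_{Q^{s_Q(p_i)}}$ is the smallest field of the form $\F_{Q^s}$ containing the coefficients of $p_i$, and since such fields form a lattice under inclusion (intersections $\F_{Q^s}\cap \F_{Q^{s'}}=\F_{Q^{\gcd(s,s')}}$), we conclude that $s_Q(p_i)\mid k$, and in particular $s_Q(p_i)\le k$.

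For the reverse direction I would set $s=s_Q(p_i)$ and apply the automorphism $\tau_{Q,s}$ to the divisibility relation $p_i(x)\mid g(x)-\alpha$ in $\overline{\F}_q[x]$. Since $g\in \F_Q[x]\subset \F_{Q^s}[x]$ is fixed by $\tau_{Q,s}$, and $p_i$ is fixed by $\tau_{Q,s}$ by definition of $s$, this yields $p_i(x)\mid g(x)-\alpha^{Q^s}$. Subtracting from the original divisibility gives $p_i(x)\mid \alpha-\alpha^{Q^s}$; as $p_i$ has positive degree and the right-hand side is a constant, that constant must vanish, so $\alpha^{Q^s}=\alpha$. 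Equivalently, $\alpha\in \F_{Q^s}$, and since $\deg_Q(\alpha)=k$ is the least such integer, we get $k\mid s$. Combined with $s\mid k$, this forces $s=k$.

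The only subtle point—and the thing I'd be most careful about—is the ``apply an automorphism to a divisibility'' step: one needs $g$, $p_i$, and the divisor polynomial to all live in a common ring on which $\tau_{Q,s}$ acts, and one must know that $\tau_{Q,s}$ genuinely fixes $p_i$ (not merely the ideal generated by it). This is exactly what $s_Q(p_i)=s$ guarantees. Everything else is a short arithmetic argument with no serious obstacle; Lemma~\ref{lem:spin1} is invoked only implicitly (via the spin formula for the degree).
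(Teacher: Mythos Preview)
Your argument is correct, and it takes a genuinely different route from the paper's. The paper does not compute $s_Q(p_i)$ directly; instead it invokes Lemma~\ref{lem:spin1} to obtain the global factorization $f(g(x))=\prod_{i=1}^r S_Q(p_i)^{e_i}$ (with $f$ the minimal polynomial of $\alpha$), notes the easy upper bound $\deg(S_Q(p_i))\le k\cdot\deg(p_i)$, and then forces equality by comparing the total degree $k\cdot\deg(g)$ on both sides. Your proof, by contrast, works one factor at a time: applying $\tau_{Q,s}$ to the divisibility $p_i\mid g(x)-\alpha$ and subtracting yields $\alpha^{Q^s}=\alpha$, hence $k\mid s$. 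This is a clean local Galois argument that uses only the definition of the spin and not the factorization statement of Lemma~\ref{lem:spin1}; it also makes transparent \emph{why} $s_Q(p_i)=k$ (any smaller $s$ would force $\alpha$ into a proper subfield). The paper's approach is shorter once Lemma~\ref{lem:spin1} is in hand and fits naturally into the surrounding development, while yours is more self-contained and conceptually explanatory. Both arrive at the same conclusion $s_Q(p_i)=k$.
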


\begin{proof}
Let $f$ be the minimal polynomial of $\alpha$ over $\F_Q$, hence $f$ is irreducible and $\deg(f)=k$. From Lemma~\ref{lem:spin1}, we have that
\begin{equation}\label{eq:spins}
f(g(x))=\prod_{i=0}^{r}S_Q(p_i(x))^{e_i}.
\end{equation}
Since $p_i\in \F_{Q^k}[x]$, it follows that $s_Q(p_i)\le k$. In particular, from the definition, we have that $$\deg(S_Q(p_i(x)))=s_Q(p_i(x))\cdot \deg(p_i)\le k\cdot \deg(p_i).$$ Taking degrees on Eq.~\eqref{eq:spins} we obtain
$$k\cdot \deg(g)=\sum_{i=1}^re_i\cdot \deg(S_Q(p_i(x)))\le k\cdot\sum_{i=1}^re_i\cdot \deg(p_i)=k\cdot \deg(g).$$
Therefore, we necessarily have the equality $\deg(S_Q(p_i(x)))=k\cdot \deg(p_i)$.
\end{proof}

Lemma~\ref{lem:spin1} and Corollary~\ref{cor:spin} immediately give the following results.

\begin{lemma}\label{lem:prime}
Let $G, F_1, F_2\in \overline{\F}_Q[x]$ be polynomials of degree at least one. If $F_1$ and $F_2$ are relatively prime, then $F_1(G(x))$ and $F_2(G(x))$ are relatively prime.
\end{lemma}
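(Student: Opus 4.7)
The plan is to reduce the conclusion to a statement about roots in the algebraic closure. Since $\overline{\F}_Q = \overline{\F}_q$ is already algebraically closed, two polynomials in $\overline{\F}_Q[x]$ are relatively prime if and only if they share no root in $\overline{\F}_q$. Thus the lemma is equivalent to the following assertion: if $F_1$ and $F_2$ have no common root in $\overline{\F}_q$, then neither do $F_1(G(x))$ and $F_2(G(x))$.

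I would prove the contrapositive directly. Suppose $\alpha \in \overline{\F}_q$ satisfies $F_1(G(\alpha)) = 0 = F_2(G(\alpha))$. Setting $\beta := G(\alpha) \in \overline{\F}_q$, the element $\beta$ is then a common root of $F_1$ and $F_2$, contradicting the hypothesis that $\gcd(F_1, F_2) = 1$. This is essentially a one-line argument from the definition of polynomial composition and evaluation, and does not require either Lemma~\ref{lem:spin1} or Corollary~\ref{cor:spin}; the spin machinery of the previous results is genuinely needed for the forthcoming statements about degrees of factors, but coprimality is preserved by precomposition with any polynomial map for purely formal reasons.

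Accordingly, I expect no real obstacle: the only thing to be careful about is to make sure the ambient field $\overline{\F}_Q[x]$ is handled correctly, i.e., to note explicitly that $\overline{\F}_Q = \overline{\F}_q$ so that the ``common root in the algebraic closure'' criterion applies unambiguously. Once this identification is made, the root-transfer argument via $\beta = G(\alpha)$ concludes the proof immediately.
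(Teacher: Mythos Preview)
Your argument is correct. Over the algebraically closed field $\overline{\F}_Q=\overline{\F}_q$, coprimality is exactly the absence of a common root, and the observation $F_i(G(\alpha))=0\Rightarrow F_i(\beta)=0$ with $\beta=G(\alpha)$ settles the matter immediately.

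The paper does not write out a proof of this lemma at all; it simply records that Lemma~\ref{lem:spin1} and Corollary~\ref{cor:spin} ``immediately give'' it (together with Proposition~\ref{prop:identities}). Presumably the intended reading is: factor $F_1$ and $F_2$ into linear factors over $\overline{\F}_Q$ and note that the factorizations of the $F_i(G(x))$ produced by the spin description involve disjoint sets of roots. Your direct contrapositive bypasses that detour entirely and is both shorter and more transparent; indeed, it works over any field (passing to an algebraic closure) and uses nothing about finite fields or spins. The spin machinery is genuinely needed for the degree statements in Corollary~\ref{cor:spin} and Proposition~\ref{prop:identities}, but, as you point out, not for mere coprimality under precomposition.
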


\begin{proposition}\label{prop:identities}
Let $f\in \F_q[x]$ be an irreducible polynomial of degree $k$ and let $\alpha\in \F_{Q}$ be any of its roots, where $Q=q^k$. For any polynomial $g\in \F_q[x]$ of degree at least one and any $n\ge 0$, the following hold:

\begin{enumerate}[(i)]
\item $E_{f, g}(n)=\nu(g^{(n)}(x)-\alpha)$;
\item $e_{f, g}(n)=\nu^*(g^{(n)}(x)-\alpha)$;
\item $M_{f, g}(n)=k\cdot M_{Q}(g^{(n)}(x)-\alpha)$;
\item $m_{f, g}(n)=k\cdot m_{Q}(g^{(n)}(x)-\alpha)$;
\item $\Delta_{f, g}(n)=k\cdot \Delta(g^{(n)}(x)-\alpha)$;
\item $N_{f, g}(n)=N_{Q}(g^{(n)}(x)-\alpha)$.
\end{enumerate}
\end{proposition}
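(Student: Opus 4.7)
The plan is to reduce all six identities to the single factorization of $P_n[f,g](x)$ over $\F_q$ delivered by Lemma~\ref{lem:spin1}. Since $f\in\F_q[x]$ is irreducible of degree $k$ with root $\alpha\in\F_Q$, I would apply Lemma~\ref{lem:spin1} with its ``$Q$'' set to $q$ (so that its ``$Q^k$'' becomes $Q=q^k$) and with $g$ replaced by the $n$-th iterate $g^{(n)}\in\F_q[x]$. If
$$g^{(n)}(x)-\alpha=p_1(x)^{e_1}\cdots p_r(x)^{e_r}$$
is the factorization of $g^{(n)}(x)-\alpha$ into distinct irreducibles over $\F_Q$, then Lemma~\ref{lem:spin1} (combined with the multiplicity refinement already used in the proof of Corollary~\ref{cor:spin}) gives
$$P_n[f,g](x)=f(g^{(n)}(x))=\lambda_f\cdot\prod_{i=1}^r S_q(p_i(x))^{e_i}$$
as the factorization of $P_n[f,g](x)$ into irreducibles over $\F_q$, and Corollary~\ref{cor:spin} further yields $\deg(S_q(p_i(x)))=k\cdot\deg(p_i)$ for each $i$.

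From this single identity the six statements amount to bookkeeping. The multiset of exponents appearing in $P_n[f,g](x)$ is $\{e_1,\ldots,e_r\}$, so its maximum and minimum give (i) and (ii). The multiset of degrees of the distinct irreducible factors of $P_n[f,g](x)$ over $\F_q$ is $\{k\cdot\deg(p_i):1\le i\le r\}$, so its maximum, minimum and sum yield (iii), (iv) and (v) respectively. Finally, the cardinality $r$ of this multiset is precisely $N_Q(g^{(n)}(x)-\alpha)$, which gives (vi).

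Since the structural work is done by Lemma~\ref{lem:spin1} and Corollary~\ref{cor:spin}, no substantial obstacle is expected. The one slightly delicate point is that the $S_q(p_i(x))$ be pairwise distinct, which is implicit in the statement of Lemma~\ref{lem:spin1} but can also be verified directly: an equality $S_q(p_i)=S_q(p_j)$ with $i\ne j$ would force $p_j=\tau_{q,t}(p_i)$ for some $0<t<s_q(p_i)\le k$; since both $p_i$ and $p_j$ divide $g^{(n)}(x)-\alpha$ in $\F_Q[x]$, applying $\tau_{q,t}$ to the relation $p_i\mid g^{(n)}(x)-\alpha$ shows that $p_i$ also divides $g^{(n)}(x)-\alpha^{q^t}$, forcing $\alpha^{q^t}=\alpha$ and hence $k\mid t$, a contradiction.
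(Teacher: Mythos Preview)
Your proposal is correct and follows exactly the route the paper indicates: in the paper, Proposition~\ref{prop:identities} is stated without a detailed proof, merely as an immediate consequence of Lemma~\ref{lem:spin1} and Corollary~\ref{cor:spin}, and you have simply unpacked that deduction (including the distinctness of the spins $S_q(p_i)$, which the paper leaves implicit). The only cosmetic slip is that in your distinctness argument it is $p_j=\tau_{q,t}(p_i)$, not $p_i$, that divides $g^{(n)}(x)-\alpha^{q^t}$; the conclusion $\alpha^{q^t}=\alpha$ then follows since $p_j$ also divides $g^{(n)}(x)-\alpha$.
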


\begin{corollary}\label{cor:nondecreasing}
 For any polynomial $g\in \F_Q[x]$ of degree at least two and any $\alpha$ in an extension $\F_{Q^k}$ of $\F_Q$, the function $m_{Q^k}(g^{(n)}(x)-\alpha)$ is nondecreasing.
 \end{corollary}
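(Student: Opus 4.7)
The plan is to establish the pointwise inequality $m_{Q^k}(g^{(n)}(x)-\alpha) \le m_{Q^k}(g^{(n+1)}(x)-\alpha)$ for every $n\ge 0$. The key observation is that, since $g^{(n+1)}(x)=g^{(n)}(g(x))$, applying $g$ to any root of $g^{(n+1)}(x)-\alpha$ produces a root of $g^{(n)}(y)-\alpha$; and because $g$ has coefficients in $\F_Q\subseteq \F_{Q^k}$, this map can only shrink the degree over $\F_{Q^k}$.

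First I would pick an irreducible factor $p\in \F_{Q^k}[x]$ of $g^{(n+1)}(x)-\alpha$ with $\deg(p)=m_{Q^k}(g^{(n+1)}(x)-\alpha)$ and fix a root $\beta\in \overline{\F}_q$ of $p$. By Remark~\ref{remark:degree}, $\deg(p)=\deg_{Q^k}(\beta)=[\F_{Q^k}(\beta):\F_{Q^k}]$. Next, I would set $\gamma=g(\beta)$. Since $g\in \F_Q[x]\subseteq \F_{Q^k}[x]$, we have $\gamma\in \F_{Q^k}(\beta)$, and from $g^{(n)}(\gamma)=g^{(n+1)}(\beta)=\alpha$ it follows that $\gamma$ is a root of $g^{(n)}(y)-\alpha$. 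Letting $q\in \F_{Q^k}[y]$ be the minimal polynomial of $\gamma$ over $\F_{Q^k}$, Remark~\ref{remark:degree} yields that $q$ is an irreducible factor of $g^{(n)}(y)-\alpha$ with $\deg(q)=[\F_{Q^k}(\gamma):\F_{Q^k}]$.

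Finally I would invoke the inclusion $\F_{Q^k}(\gamma)\subseteq \F_{Q^k}(\beta)$ to conclude $\deg(q)\le \deg(p)$, so that
$$m_{Q^k}(g^{(n)}(y)-\alpha)\le \deg(q)\le \deg(p)=m_{Q^k}(g^{(n+1)}(x)-\alpha),$$
which is the desired monotonicity step, and iterating gives the full statement. There is no genuinely hard step here; the only subtlety is setting up the tower $\F_{Q^k}\subseteq \F_{Q^k}(\gamma)\subseteq \F_{Q^k}(\beta)$ correctly, which relies precisely on the assumption that $g\in \F_Q[x]$.
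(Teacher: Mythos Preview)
Your argument is correct. The paper gives no explicit proof of this corollary, merely asserting that it (together with Lemma~\ref{lem:prime} and Proposition~\ref{prop:identities}) follows immediately from the spin machinery of Lemma~\ref{lem:spin1} and Corollary~\ref{cor:spin}. The implicit spin argument would proceed in the forward direction: factor $g^{(n)}(x)-\alpha$ over $\F_{Q^k}$ into irreducibles $p_i$, and use Corollary~\ref{cor:spin} (applied with base field $\F_{Q^k}$) to see that every irreducible factor of $p_i(g(x))$ has degree $\deg(p_i)\cdot(\text{positive integer})\ge \deg(p_i)$; hence the minimal degree cannot decrease. Your argument runs in the reverse direction: pick a minimal-degree root $\beta$ of $g^{(n+1)}(x)-\alpha$ and push it forward by $g$ to a root $\gamma$ of $g^{(n)}(x)-\alpha$ lying in $\F_{Q^k}(\beta)$. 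The two are dual manifestations of the same field-theoretic fact (the tower law), but your version is self-contained and avoids the spin formalism entirely, which is a mild gain in clarity for this particular statement.
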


The following lemma provides bounds for the values of $\nu$ and $\nu^*$ at iterated polynomials $g^{(n)}(x)-\alpha$.

\begin{lemma}\label{lem:mult-1}
Let $\alpha$ be an element in $\overline{\F}_q$, $g\in \overline{\F}_q[x]$ be a polynomial of degree at least one and $n\ge 1$. The following hold: 
\begin{equation}\label{eq:nu}
\nu(g^{(n)}(x)-\alpha)\le \max\limits_{\Gamma\in C_n(\alpha)}\prod_{\lambda \in \Gamma}\nu(g(x)-\lambda)
\end{equation}
and
\begin{equation}\label{eq:nu*}
\nu^*(g^{(n)}(x)-\alpha)\ge \min\limits_{\Gamma\in C_n(\alpha)}\prod_{\lambda \in \Gamma}\nu^*(g(x)-\lambda),
\end{equation}
where $C_n(\alpha)$ comprises the $n$-tuples $\Gamma=(\gamma_1, \ldots, \gamma_n)\in (\overline{\F}_q)^n$ such that $\gamma_1=\alpha$ and, if $n\ge 2$, $g(\gamma_i)=\gamma_{i-1}$ for $2\le i\le n$.
\end{lemma}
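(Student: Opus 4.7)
The key is an inductive unfolding of the multiplicity of a root of $g^{(n)}(x)-\alpha$ as a product of multiplicities of roots of polynomials of the form $g(x)-\gamma$ arising from backward orbits of $\alpha$ under $g$.

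First I would factor $g(y)-\alpha$ completely over $\overline{\F}_q$ as $g(y)-\alpha=c\prod_{\lambda}(y-\lambda)^{m_\lambda}$, where $\lambda$ ranges over the distinct preimages of $\alpha$ under $g$ and $m_\lambda$ is the multiplicity of $\lambda$ as a root of $g(y)-\alpha$. Substituting $y=g^{(n-1)}(x)$ gives
\[
g^{(n)}(x)-\alpha=c\prod_\lambda\bigl(g^{(n-1)}(x)-\lambda\bigr)^{m_\lambda}.
\]
Since the polynomials $g^{(n-1)}(x)-\lambda$ have pairwise disjoint root sets (any common root $\beta$ would force $\lambda=g^{(n-1)}(\beta)$ to be uniquely determined), a root $\beta$ of $g^{(n)}(x)-\alpha$ lies in exactly one factor, namely the one with $\lambda=g^{(n-1)}(\beta)$.

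Next I would iterate this. Setting $\gamma_1=\alpha$, $\gamma_{i+1}=g^{(n-i)}(\beta)$ for $1\le i\le n-1$, and $\gamma_{n+1}=\beta$, the identities $g(\gamma_{i+1})=\gamma_i$ hold, so $\Gamma=(\gamma_1,\ldots,\gamma_n)\in C_n(\alpha)$. A straightforward induction on $n$ (applying the factorization above at each step to $g^{(n-i)}(x)-\gamma_i$) yields the product formula
\[
\mu\bigl(\beta,\,g^{(n)}(x)-\alpha\bigr)=\prod_{i=1}^{n}\mu\bigl(\gamma_{i+1},\,g(x)-\gamma_i\bigr),
\]
where $\mu(\cdot,\cdot)$ denotes the multiplicity of a root in a polynomial. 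By the very definitions of $\nu$ and $\nu^*$, each factor satisfies
\[
\nu^*\bigl(g(x)-\gamma_i\bigr)\le \mu\bigl(\gamma_{i+1},\,g(x)-\gamma_i\bigr)\le \nu\bigl(g(x)-\gamma_i\bigr).
\]

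Finally, to obtain inequality \eqref{eq:nu} I would take the maximum of $\mu(\beta,g^{(n)}(x)-\alpha)$ over all roots $\beta$: every such $\beta$ produces a tuple $\Gamma\in C_n(\alpha)$, so the maximum is bounded above by $\max_{\Gamma\in C_n(\alpha)}\prod_{\lambda\in\Gamma}\nu(g(x)-\lambda)$. For \eqref{eq:nu*} I would take the minimum; here I use that \emph{every} $\Gamma\in C_n(\alpha)$ is realized by some root $\beta$ (any preimage of $\gamma_n$ under $g$ works, and such a preimage exists because $g$ is nonconstant), so the minimum over $\beta$ is at least the minimum over $\Gamma\in C_n(\alpha)$ of $\prod_{\lambda\in\Gamma}\nu^*(g(x)-\lambda)$.

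The only delicate point is the bookkeeping of indices—keeping straight that the tuple $\Gamma$ has length $n$ while the product of multiplicities has $n$ factors indexed by $\gamma_1,\ldots,\gamma_n$, with the final factor coming from $\beta$ being a root of $g(x)-\gamma_n$. No deeper obstacle appears; once the inductive factorization identity for $\mu(\beta,g^{(n)}(x)-\alpha)$ is established, both inequalities follow immediately.
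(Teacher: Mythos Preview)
Your proof is correct and follows essentially the same inductive unfolding as the paper's proof; the only cosmetic difference is that you first derive the exact identity $\mu(\beta,g^{(n)}(x)-\alpha)=\prod_{i=1}^n\mu(\gamma_{i+1},g(x)-\gamma_i)$ and then bound, whereas the paper bounds directly at each inductive step. One small remark: your surjectivity observation (that every $\Gamma\in C_n(\alpha)$ arises from some root $\beta$) is true but unnecessary for \eqref{eq:nu*}, since for each $\beta$ you already have $\mu(\beta,\cdot)\ge\prod_{\lambda\in\Gamma_\beta}\nu^*(g(x)-\lambda)\ge\min_{\Gamma}\prod_{\lambda\in\Gamma}\nu^*(g(x)-\lambda)$, and taking the minimum over $\beta$ gives the result immediately.
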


\begin{proof}
We only consider the Ineq.~\eqref{eq:nu} since the proof of Ineq.~\eqref{eq:nu*} is entirely similar. We proceed by induction on $n$. The case $n=1$ is trivial. Suppose that the result holds for an integer $N\ge 1$ and let $n=N+1$. If $g(x)-\alpha=\prod_{i=1}^s(x-\delta_i)^{e_i}$, where the $\delta_i$ are pairwise distinct, we have that 
$$\nu(g^{(n)}(x)-\alpha)=\max _{1\le i\le s}e_i\cdot \nu(g^{(N)}(x)-\delta_i).$$
From induction hypothesis and the trivial inequality $e_i\le \nu(g(x)-\alpha)$, we have that
$$\nu(g^{(n)}(x)-\alpha)\le  \nu(g(x)-\alpha)\cdot \max_{\Gamma\in C_N(\delta_i), 1\le i\le s}\prod_{\lambda \in \Gamma}\nu(g(x)-\lambda).$$
It follows by the definition that the elements of $C_n(\alpha)$ are exactly the sets of the form $\{\alpha, \gamma_2, \ldots, \gamma_{n-1}\}$, where 
$\{\gamma_2, \ldots, \gamma_{n-1}\}\in C_N(\delta_i)$ for some $1\le i\le s$. In particular,
$$ \nu(g(x)-\alpha)\cdot \max_{\Gamma\in C_N(\delta_i), 1\le i\le s}\prod_{\lambda \in \Gamma}\nu(g(x)-\lambda)=\max\limits_{\Gamma\in C_n(\alpha)}\prod_{\lambda \in \Gamma}\nu(g(x)-\lambda),$$
and the result follows.
\end{proof}

\subsection{A reduction of Theorems~\ref{thm:main1} and~\ref{thm:main2}}
Here we show that Theorems~\ref{thm:main1} and~\ref{thm:main2} need only to be proved for pairs $(f, g)$ such that $f$ is irreducible. 

\begin{lemma}\label{lem:reduction-1}
 Let $f, g\in \F_q[x]$ be polynomials such that $\deg(f)\ge 1$ and $\deg(g)\ge 2$. Suppose that the irreducible factorization of $f$ over $\F_q$ is
$$f(x)=f_1(x)^{e_1}\cdots f_s(x)^{e_s}.$$ 
Then the pair $(f, g)$ is critical (resp. $p$-critical) if and only if $(f_i, g)$ is critical (resp. $p$-critical) for any $1\le i\le n$. In addition, for any $n\ge 0$, the following hold:
\begin{enumerate}[(a)]
\item $E_{f, g}(n)=\max\limits_{1\le i\le s} \{e_i\cdot E_{f_i, g}(n)\}$ if $n\ge 1$;
\item $e_{f, g}(n)=\min\limits_{1\le i\le s}\{e_i\cdot e_{f_i, g}(n)\}$ if $n\ge 1$;
\item $M_{f, g}(n)=\max\limits_{1\le i\le s} \{M_{f_i, g}(n)\}$;
\item $m_{f, g}(n)=\min\limits_{1\le i\le s} \{m_{f_i, g}(n)\}$;
 \item $\Delta_{f, g}(n)= \sum_{1\le i\le s} \Delta_{f_i, g}(n)$;
\item $N_{f, g}(n)= \sum_{1\le i\le s} N_{f_i, g}(n)$.
 \end{enumerate}
 In particular, if Theorems~\ref{thm:main1} and~\ref{thm:main2} hold for the pairs $(f, g)$ with $f$ irreducible, then they hold for any pair $(f, g)$.
 \end{lemma}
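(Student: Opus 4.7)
The plan is to split the proof into three parts: the (p-)critical equivalences, the six factorization identities, and the ``in particular'' transfer, with the factorization identities being the central content.

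First, I would handle the (p-)critical equivalences. P-criticality is a condition on $g$ alone (it asserts that the $p$-reduction of $g$ is linear), so the equivalence between $(f,g)$ and each $(f_i,g)$ is immediate. For the critical case, if $(f,g)$ is critical then $f(x)=\beta(x-\alpha)^k$ has the unique root $\alpha\in\F_q$, forcing $s=1$ and $f_1$ to be a scalar multiple of $x-\alpha$, so $(f_1,g)$ is critical. Conversely, if every $(f_i,g)$ is critical, then each irreducible $f_i$ must be of the form $\beta_i(x-\alpha_i)$ with $g(x)=\gamma_i(x-\alpha_i)^D+\alpha_i$. The key observation is that $\alpha_i$ is rigidly determined by $g$: it is the unique element of $\F_q$ at which $g(x)-\alpha_i$ has a single root of full multiplicity $D=\deg g$. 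Hence all $\alpha_i$ coincide; since the $f_i$ are pairwise distinct monic irreducibles, this forces $s=1$, and $(f,g)$ is critical.

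Next, the factorization identities. From the trivial decomposition $P_n[f,g](x)=\prod_{i=1}^{s} P_n[f_i,g](x)^{e_i}$ and the pairwise coprimality of the $f_i$, Lemma~\ref{lem:prime} yields that the polynomials $P_n[f_i,g]=f_i(g^{(n)}(x))$ are pairwise coprime in $\F_q[x]$. Therefore the irreducible factorizations of the various $P_n[f_i,g]$'s live on pairwise disjoint sets of irreducibles of $\F_q[x]$, and their juxtaposition, with each inner multiplicity scaled by $e_i$, is the complete irreducible factorization of $P_n[f,g]$. Identities (a)--(f) now read off immediately from this concatenated factorization: taking the maximum or minimum of $e_i\cdot(\text{multiplicity inside }P_n[f_i,g])$ yields (a) and (b); taking the maximum or minimum of factor degrees over $i$ yields (c) and (d); and summing squarefree-part degrees or factor counts over $i$ yields (e) and (f). For the ``in particular'' statement, suppose the theorems hold whenever $f$ is irreducible and let $(f,g)$ be neither critical nor p-critical. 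By the equivalences just proved, at least one $(f_{i_0},g)$ is neither critical nor p-critical, and the required bounds for $(f,g)$ follow by combining the irreducible-case bounds applied to $(f_{i_0},g)$ with the identities (a)--(f); the scalar factors $e_i$ appearing there affect only implicit constants and not the growth types.

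The main obstacle is the ``all $(f_i,g)$ critical implies $(f,g)$ critical'' direction, which hinges on the rigidity of the ramification center $\alpha$ of $g$, namely the fact that it is an invariant of $g$ alone and not of the pair. Once this uniqueness is settled, everything else is bookkeeping on the factorization of $P_n[f,g]$, powered by Lemma~\ref{lem:prime}.
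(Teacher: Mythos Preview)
Your approach and the paper's coincide: both rest on the decomposition $P_n[f,g]=\prod_{i} P_n[f_i,g]^{e_i}$ together with Lemma~\ref{lem:prime}, which gives pairwise coprimality of the $P_n[f_i,g]$ and from which (a)--(f) are read off directly.

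There is, however, a gap in your rigidity claim for the critical equivalence. The assertion that $\alpha$ is the \emph{unique} element of $\F_q$ for which $g(x)-\alpha$ is a pure $D$-th power fails precisely when $D$ is a power of $p$: over $\F_3$ with $g(x)=x^3$ one has $x^3=(x-1)^3+1=(x-2)^3+2$, so both $(x-1,g)$ and $(x-2,g)$ are critical, yet $\big((x-1)(x-2),\,g\big)$ is not. Thus the critical equivalence, as literally stated in the lemma, is false in the $p$-critical regime. The repair is easy: assume in addition that $g$ is not of the form $ax^{p^h}+b$; then writing $D=p^h d$ with $d>1$ and $\gcd(d,p)=1$, the coefficient of $x^{p^h(d-1)}$ in $(x-\alpha)^D=(x^{p^h}-\alpha^{p^h})^{d}$ equals $-d\,\alpha^{p^h}$, which determines $\alpha$ uniquely (this is essentially Claim~1 of Proposition~\ref{prop:main1}). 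This caveat is harmless for the intended use, since there $(f,g)$ is assumed not $p$-critical anyway. One further remark on your transfer step: picking a single index $i_0$ with $(f_{i_0},g)$ neither critical nor $p$-critical suffices for the \emph{lower}-bound conclusions via (c)--(f), but not for the upper bound $E_{f,g}(n)\le\sqrt{2}\,\kappa_g^n$ of Theorem~\ref{thm:main1}(ii), which through (a) requires control of \emph{every} $E_{f_i,g}$; if some $(f_j,g)$ happens to be critical (possible for at most one $j$ once $g$ is not $p$-critical) then $E_{f_j,g}(n)=D^n$ and that bound already fails---so that particular item is in fact imprecise for reducible $f$, independently of your argument.
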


 \begin{proof}
 The first statement follows directly from the definition of critical and $p$-critical pairs. Also, items (a--f) follow from Lemma~\ref{lem:prime} and the fact that 
 $$P_n[f, g](x)=\prod_{i=1}^sP_n[f_i, g](x)^{e_i},\; n\ge 0.$$
 For the last statement we observe that, for a fixed $f$, the number $s$ is bounded by $\deg(f)$, that does not depend on $n$. In particular, from the bounds in the items (a--f) above, Theorems~\ref{thm:main1} and~\ref{thm:main2} hold for the pair $(f, g)$ whenever they hold for each pair $(f_i, g)$ with $1\le i\le s$. 
 \end{proof}
\section{Proof of Theorems~\ref{thm:main1} and~\ref{thm:main2}}
In this section we provide the proof of Theorems~\ref{thm:main1} and~\ref{thm:main2}, that is divided in many parts. We observe that, from Lemma~\ref{lem:reduction-1}, we only need to consider pairs $(f, g)$ such that $f$ is irreducible. So we take the following assumption:
\begin{center}\fbox{The pair $(f, g)$ is neither critical nor $p$-critical and $f$ is irreducible.}\end{center}
Before we proceed to the proof of Theorems~\ref{thm:main1} and~\ref{thm:main2}, we comment on the main ideas that are employed. If $f\in \F_q[x]$ has degree $k$ and is irreducible, Proposition~\ref{prop:identities} entails that the arithmetic functions associated to $f(g^{(n)}(x))$ depend on the factorization of the polynomial $g^{(n)}(x)-\alpha$ over $\F_{q^k}$, where $\alpha$ is any root of $f$. So we focus on the factorization of polynomials of the form $g^{(n)}(x)-\alpha$ in our statements and explain how they imply the main results.
When studying the polynomials of the form $g^{(n)}(x)-\alpha$, we will frequently look at the {\em reversed $g$-orbit} of $\alpha$, that is, the set $\{\beta\in \overline{\F}_q\,|\, g^{(i)}(\beta)=\alpha\; \text{for some}\; i\ge 0\}$.

We start with the following definition.
\begin{definition}
For a polynomial $g\in \F_q[x]$, an element $\lambda\in \overline{\F}_q$ is said to be \emph{$g$-periodic} if there exists a positive integer $i$ such that $g^{(i)}(\lambda)=\lambda$.
\end{definition}
We have the following proposition.

\begin{proposition}\label{prop:main1}
Let $g\in \F_q[x]$ be a polynomial of degree $D\ge 2$ and $\alpha\in \overline{\F}_q$. Suppose that $G\in \F_q[x]$ is the $p$-reduction of $g$ with $g(x)=G(x)^{p^h}$, where $h\ge 0$. If $g(x)$ is not of the form $a(x-\alpha)^D+\alpha$ or of the form $ax^{p^h}+b$ then, for any $n\ge 0$, the following holds: $$p^{nh}\le \nu^*(g^{(n)}(x)-\alpha)\le \nu(g^{(n)}(x)-\alpha)\le \sqrt{\frac{D}{D-1}}\cdot \kappa_g^n\le \sqrt{2}\kappa_g^n\, ,$$
where $\kappa_g=\sqrt{D(D-1)}$.
\end{proposition}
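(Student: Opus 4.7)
The middle inequality $\nu^* \le \nu$ is tautological, so it suffices to prove the outer two bounds. For the lower bound I would induct on $n$, proving the stronger statement that $\nu^*(g^{(n)}(x) - \delta) \ge p^{nh}$ for \emph{every} $\delta \in \overline{\F}_q$. Since the Frobenius is bijective on $\overline{\F}_q$, we may write $\alpha = \alpha_0^{p^h}$, and then
\[
g^{(n)}(x) - \alpha \;=\; G(g^{(n-1)}(x))^{p^h} - \alpha_0^{p^h} \;=\; \bigl(G(g^{(n-1)}(x)) - \alpha_0\bigr)^{p^h}.
\]
Factoring $G(y) - \alpha_0 = \prod_j (y - \delta_j)^{f_j}$ converts the inner polynomial into $\prod_j (g^{(n-1)}(x) - \delta_j)^{f_j}$, all of whose roots have multiplicity at least $p^{(n-1)h}$ by the inductive hypothesis; raising to the $p^h$-th power yields $\nu^* \ge p^{nh}$.

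For the upper bound I would apply Lemma~\ref{lem:mult-1}, reducing the claim to bounding $P(\Gamma) := \prod_{i=1}^n \nu(g(x) - \gamma_i)$ for arbitrary $\Gamma = (\gamma_1, \ldots, \gamma_n) \in C_n(\alpha)$. Each factor satisfies $\nu(g(x) - \lambda) \le D$, with equality iff $g(x) - \lambda = c(x - \mu)^D$ for some $c, \mu$, and such a decomposition, if it exists, determines $c, \mu, \lambda$ uniquely. If it does not exist, every factor is $\le D-1$ and $P(\Gamma) \le (D-1)^n \le \sqrt{D/(D-1)}\,\kappa_g^n$. Otherwise let $(\beta^*, \gamma^*)$ be the unique pair with $g(x) - \gamma^* = c(x - \beta^*)^D$; for any $\lambda \ne \gamma^*$, the polynomial $c(x - \beta^*)^D - (\lambda - \gamma^*)$ has all its roots of multiplicity exactly $p^h$, where $D = p^h d$ with $\gcd(d, p) = 1$. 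The non-$p$-critical hypothesis $g(x) \ne ax^{p^h} + b$ forces $d > 1$, and hence $p^h \le D/2 \le D - 1$. Setting $k = \#\{i : \gamma_i = \gamma^*\}$, we then have $P(\Gamma) = D^k (p^h)^{n-k}$.

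The main obstacle is the combinatorial bound $k \le (n+1)/2$, which is where both exclusion hypotheses become essential. Since $\beta^*$ is the only $g$-preimage of $\gamma^*$, the chain condition $g(\gamma_i) = \gamma_{i-1}$ forces $\gamma_i = \beta^*$ whenever $\gamma_{i-1} = \gamma^*$. If $\beta^* = \gamma^*$, then $\gamma^*$ is $g$-fixed, and a descent argument shows that any $\gamma_j = \gamma^*$ propagates back to $\gamma_1 = \alpha = \gamma^*$, so $g(x) = c(x-\alpha)^D + \alpha$, contradicting the non-critical hypothesis; therefore $k = 0$. If $\beta^* \ne \gamma^*$, the same implication becomes $\gamma_{i-1} = \gamma^* \Rightarrow \gamma_i = \beta^* \ne \gamma^*$, so $\{i : \gamma_i = \gamma^*\}$ contains no two consecutive integers in $\{1, \ldots, n\}$, whence $k \le \lceil n/2 \rceil \le (n+1)/2$.

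Finally, since $D > p^h$, the quantity $D^k (p^h)^{n-k}$ is monotonically increasing in $k$, and so using $k \le (n+1)/2$ together with $p^h \le D - 1$ gives
\[
P(\Gamma) \;\le\; D^{(n+1)/2}(p^h)^{(n-1)/2} \;\le\; D^{(n+1)/2}(D-1)^{(n-1)/2} \;=\; \sqrt{D/(D-1)}\,\kappa_g^n,
\]
while the inequality $\sqrt{D/(D-1)} \le \sqrt{2}$ follows immediately from $D \ge 2$.
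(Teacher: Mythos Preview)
Your argument is correct and follows the paper's strategy: both reduce to Lemma~\ref{lem:mult-1}, identify the (at most one) value $\gamma^*=B$ with $\nu(g(x)-\gamma^*)=D$, and bound the number of times it can occur along a chain $\Gamma\in C_n(\alpha)$. The main difference lies in how that count is controlled. The paper argues that if $B$ occurs at least twice in some chain then $B$ is $g$-periodic with least period $M$, proves the general bound $e(B,\Gamma)\le (n-1)/M+1$, and then rules out $M=1$ via the non-critical hypothesis. You bypass the periodicity discussion entirely: since $\beta^*$ is the \emph{only} $g$-preimage of $\gamma^*$, the chain relation forces $\gamma_{i-1}=\gamma^*\Rightarrow\gamma_i=\beta^*$, and the dichotomy $\beta^*=\gamma^*$ versus $\beta^*\ne\gamma^*$ immediately yields either a contradiction (via descent to $\alpha=\gamma^*$) or the ``no two consecutive'' constraint $k\le\lceil n/2\rceil$. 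This is a cleaner route to the same bound $k\le (n+1)/2$. You also exploit the explicit form $g(x)=c(x-\beta^*)^D+\gamma^*$ to compute $\nu(g(x)-\lambda)=p^h$ \emph{exactly} for $\lambda\ne\gamma^*$, whereas the paper only uses the cruder bound $\nu\le D-1$; in the end you invoke $p^h\le D-1$ anyway, so this sharpening is not needed, but it is correct. One small point: your assertion that the decomposition $g(x)-\lambda=c(x-\mu)^D$ ``determines $c,\mu,\lambda$ uniquely'' tacitly uses that $D$ is not a pure $p$-power (else any $\mu$ works); this is exactly what the non-$p$-critical hypothesis guarantees once such a $\lambda$ exists, and the paper spells this out as its Claim~1.
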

\begin{proof}
The result is trivial for $n=0$ and so we suppose that $n\ge 1$. We observe that, for any $\lambda\in \overline{\F}_q$, $\nu(g(x)-\lambda)=p^h\cdot \nu(G(x)-\sigma_{-h}(\lambda))$ and  $\nu^*(g(x)-\lambda)=p^h\cdot \nu^*(G(x)-\sigma_{-h}(\lambda))$. In particular, the inequality
$$p^{nh}\le \nu^*(g^{(n)}(x)-\alpha)\le \nu(g^{(n)}(x)-\alpha),$$
follows trivially from Ineq.~\eqref{eq:nu*}. Following the notation of Lemma~\ref{lem:mult-1}, $C_n(\alpha)$ comprises the $n$-tuples $$\Gamma=(\gamma_1,\ldots, \gamma_n)\in (\overline{\F}_q)^n,$$ such that $\gamma_1=\alpha$ and, if $n\ge 2$, $g(\gamma_i)=\gamma_{i-1}$ for $2\le i\le n$. From Ineq.~\eqref{eq:nu}, we have that
\begin{equation}\label{eq:nuu}\nu(g^{(n)}(x)-\alpha)\le \max\limits_{\Gamma\in C_n(\alpha)}\left\{\prod_{\lambda \in \Gamma}\nu(g(x)-\lambda)\right\}.\end{equation}
We observe that $\nu(g(x)-\lambda))\le D=\deg(g)$, for any $\lambda\in \overline{\F}_q$. 

\begin{enumerate}[]
\item {\bf Claim 1.} \emph{The equality $\nu(g(x)-\gamma)=D$ holds for at most one element $\gamma\in \overline{\F}_q$}.
\item {\it Proof of Claim 1.} 
Since $g(x)$ is not of the form $ax^{p^h}+b$ and $G\in \F_q[x]$ is the $p$-reduction of $g$, we have that $G$ has degree $d:=\frac{D}{p^h}\ge 2$ and $G'(x)$ is not the zero polynomial. Since $g(x)=G(x)^{p^h}$, it suffices to prove that $\nu(G(x)-\delta)=d$ for at most one element $\delta\in \overline{\F}_q$. However, if $\nu(G(x)-\delta)=\nu(G(x)-\delta_0)=d$ with $\delta\ne \delta_0$, it follows that the polynomial $G'(x)$ vanishes at two distinct elements of $\overline{\F}_q$ with multiplicity at least $d-1$. Since $G'(x)$ is not the zero polynomial, it follows that $\deg(G'(x))\ge 2(d-1)$. However, $\deg(G')\le \deg(G)-1=d-1$ and so $2(d-1)\le d-1$, a contradiction since $d\ge 2$.
\end{enumerate}
If it does not exist $\gamma\in \overline{\F}_q$ such that $\nu(g(x)-\gamma)=D$, Ineq.~\eqref{eq:nuu} yields $$\nu(g^{(n)}(x)-\alpha)\le (D-1)^n< \sqrt{\frac{D}{D-1}}\cdot \kappa_g^n.$$ On the contrary, let $B\in \overline{\F}_q$ be such that $\nu(g(x)-B)=D$. From Claim 1, $\nu(g(x)-\gamma)\le D-1$ whenever $\gamma\ne B$. For any $\Gamma=(\gamma_1, \ldots, \gamma_n)\in C_n(\alpha)$ and $\gamma\in \overline{\F}_q$, let $e(\gamma, \Gamma)$ be the number of indexes $1\le i\le n$ for which $\gamma=\gamma_i$. In particular, Ineq.~\eqref{eq:nuu} yields 
\begin{equation}\label{eq:nuuu}
\nu(g^{(n)}(x)-\alpha)\le \max_{\Gamma\in C_n(\alpha)}\{D^{e(B, \Gamma)}\cdot (D-1)^{n-e(B, \Gamma)}\}.
\end{equation}
Therefore, if $e(B, \Gamma)\le 1$ for any $n\ge 1$ and any $\Gamma\in C_n(\alpha)$, Ineq.~\eqref{eq:nuuu} yields $$\nu(g^{(n)}(x)-\alpha)\le D(D-1)^{n-1}< \sqrt{\frac{D}{D-1}}\cdot \kappa_{g}^n,$$ as desired. Otherwise, there exist $N\ge 2$ and $\Gamma_0\in C_N(\alpha)$ such that $B\in \Gamma_0$ and $e(B, \Gamma_0)\ge 2$. If $B=\gamma_i=\gamma_j\in \Gamma_0$ with $1\le i<j\le N$, then $g^{(j-i)}(B)=B$ and $g^{(i)}(B)=\alpha$ and so $B, \alpha$ are $g$-periodic and lie in the same orbit. Additionally, since $\nu(g(x)-B)=D$, there exist $a, b\in \overline{\F}_q$ such that $g(x)=a(x-b)^D+B$. Let $M$ be the least positive integer such that $g^{(M)}(B)=B$. 

\begin{enumerate}[]
\item {\bf Claim 2.} {\em For any $n\ge 1$ and any $\Gamma\in C_{n}(\alpha)$, we have that $$e(B, \Gamma)\le \frac{(n-1)}{M}+1.$$}
\item {\it Proof of Claim 2.} Set $s=e(B, \Gamma)$; if $s=0, 1$ the result is trivial. Otherwise, $s\ge 2$ and following the proof that $B$ is periodic, we see that there exist integers $0\le i_1<\cdots<i_s\le n-1$ such that $g^{(i_l-i_{l-1})}(B)=B$ for any $2\le l\le s$. In particular, $i_{l}-i_{l-1}\ge M$ for any $2\le l\le s$ and so 
$$n-1\ge i_{s}-i_1=\sum_{2\le l\le s}(i_{l}-i_{l-1})\ge (s-1)M,$$ proving the claim.
\end{enumerate}

From $e(B, \Gamma)\le (n-1)/M+1$ and Ineq.~\eqref{eq:nuuu} we have that, for any $n\ge 1$, the following holds
$$\nu(g^{(n)}(x)-\alpha)\le D\cdot D^{(n-1)/M}(D-1)^{(n-1)(M-1)/M}\le \sqrt{\frac{D}{D-1}}\cdot \kappa_g^{n},$$
provided that $M\ge 2$. In particular, we only need to consider the case $M=1$. However, in this case, $B=g(B)=a(B-b)^D+B$ and so $b=B$ and $g(x)=a(x-B)^D+B$. Additionally, since $\alpha$ is $g$-periodic and lies in the same orbit of $B$, we have that $\alpha=B$. In other words, $g(x)=a(x-\alpha)^D+\alpha$, contradicting our hypothesis.
\end{proof}

We observe that, from Proposition~\ref{prop:main1}, items (i) and (ii) of Theorem~\ref{thm:main1} are easily deduced. In fact, from Lemma~\ref{lem:reduction-1}, we only need prove them for $f$ an irreducible polynomial. If $f$ has degree $k$ and $\alpha\in \F_{q^k}$ is any of its roots, Proposition~\ref{prop:identities} entails that $e_{f, g}(n)=\nu^*(g^{(n)}(x)-\alpha)$ and $E_{f, g}(n)=\nu(g^{(n)}(x)-\alpha)$. Moreover, the assumption in Theorem~\ref{thm:main1} that the pair $(f, g)$ is neither critical nor $p$-critical directly implies that $g$ and $\alpha$ satisfy the conditions in Proposition~\ref{prop:main1}.

We proceed to the proof of item (iii) of Theorem~\ref{thm:main1} and the comment thereafter. The key idea is to prove that, if $g$ is not of the form $ax^{p^h}+b$ or $\beta(x-\alpha)^D+\alpha$, there exists an element $\gamma$ in the reversed $g$-orbit $$\{\beta\in \overline{\F}_q\,|\, g^{(i)}(\beta)=\alpha\; \text{for some}\; i\ge 0\}$$ of $\alpha$ such that $\Delta(g^{(n)}(x)-\gamma)\ge \deg(G)^n$ for any $n\ge 0$, where $G$ is the $p$-reduction of $g$. If $g^{(j)}(\gamma)=\alpha$, the polynomial $g^{(n)}(x)-\gamma$ divides $g^{(n+j)}(x)-\alpha$ for any $n\ge 0$. The latter implies that $\Delta(g^{(n)}(x)-\alpha)\gg \deg(G)^n$. These observations are compiled in the following lemma.

\begin{lemma}\label{lem:delta}
Let $\alpha\in\overline{\F}_q$ and let $g\in \F_q[x]$ be a polynomial of degree $D\ge 2$ such that $g$ is not of the form $ax^{p^h}+b$ or $a(x-\alpha)^D+\alpha$. Let $G\in \F_q[x]$ be the $p$-reduction of $g$, $d=\deg(G)>1$ and write $D=d\cdot p^h$ with $h\ge 0$. Then there exists a positive integer $i$ and an element $\gamma\in \overline{\F}_q$ such that $g^{(i)}(\gamma)=\alpha$ and $\nu(g^{(n)}(x)-\gamma)=p^{nh}$ for any $n\ge 0$. In particular, there exists a constant $C_{\alpha, g}>0$ such that \begin{equation}\label{eq:delta}\Delta(g^{(n)}(x)-\alpha)\ge C_{\alpha, g}\cdot d^n,\end{equation} 
for any $n\ge 0$ and so $\Delta(g^{(n)}(x)-\alpha)\approx d^n$.
\end{lemma}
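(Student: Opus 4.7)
The plan is to exhibit an element $\gamma$ in the reversed $g$-orbit of $\alpha$ whose own reversed $g$-orbit avoids the ``critical values'' of $g$, and then transfer the resulting equality $\nu(g^{(n)}(x)-\gamma)=p^{nh}$ to a lower bound on $\Delta(g^{(n+i)}(x)-\alpha)$ via divisibility. Using $g(x)-\lambda=(G(x)-\lambda^{1/p^h})^{p^h}$, one has $\nu(g(x)-\lambda)=p^h\cdot\nu(G(x)-\lambda^{1/p^h})$, so the set $S:=\{\lambda\in\overline{\F}_q:\nu(g(x)-\lambda)>p^h\}$ is in bijection with the set of $\mu\in\overline{\F}_q$ for which $G(x)-\mu$ has a multiple root; since $G'\ne 0$ and $\deg G'\le d-1$, this gives $|S|\le d-1$. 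Because $S$ lies in some fixed $\F_{q^N}$, the forward $g$-orbit $T$ of $S$ is a finite $g$-invariant subset of $\overline{\F}_q$.

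Next I would show that $R:=\bigcup_{n\ge 0}g^{-n}(\alpha)$ is infinite. If it were finite, surjectivity of $g$ on $\overline{\F}_q$ combined with closedness of $R$ under $g^{-1}$ would force $g|_R:R\to R$ to be a bijection, so every $\beta\in R$ would have a unique $g$-preimage in $\overline{\F}_q$ and hence $G(x)-\beta^{1/p^h}=a^{1/p^h}(x-c_\beta)^d$ for some $c_\beta\in\overline{\F}_q$. Differentiating yields $G'(x)=a^{1/p^h}d(x-c_\beta)^{d-1}$; when $p\mid d$ this contradicts $G'\ne 0$ directly, while when $p\nmid d$ comparing the identity for two distinct $\beta_1,\beta_2\in R$ and reading off the coefficient of $x^{d-1}$ forces $c_{\beta_1}=c_{\beta_2}$ and hence $\beta_1=\beta_2$, giving $R=\{\alpha\}$ and $g(x)=a(x-\alpha)^D+\alpha$, contradicting the hypothesis. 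This is the main obstacle, where the exclusion of the form $g(x)=a(x-\alpha)^D+\alpha$ is crucially used.

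With $R$ infinite and $T\cup\{\alpha\}$ finite, I would pick $\gamma\in R\setminus(T\cup\{\alpha\})$, so that $g^{(i)}(\gamma)=\alpha$ for some $i\ge 1$. Any $\beta$ with $g^{(n)}(\beta)=\gamma$ must avoid $S$, for otherwise $\gamma=g^{(n)}(\beta)\in T$. Applying Lemma~\ref{lem:mult-1} to $g^{(n)}(x)-\gamma$: every coordinate of any $\Gamma\in C_n(\gamma)$ lies in the reversed $g$-orbit of $\gamma$ and therefore contributes a factor $\nu(g(x)-\lambda)=p^h$, producing $\nu(g^{(n)}(x)-\gamma)\le p^{nh}$; the matching lower bound $\nu^*(g^{(n)}(x)-\gamma)\ge p^{nh}$ follows from $\nu^*(g(x)-\lambda)\ge p^h$ and Ineq.~\eqref{eq:nu*}. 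Hence every root of $g^{(n)}(x)-\gamma$ has multiplicity exactly $p^{nh}$ and $\Delta(g^{(n)}(x)-\gamma)=D^n/p^{nh}=d^n$. Finally, $g^{(i)}(\gamma)=\alpha$ gives $g^{(n)}(x)-\gamma\mid g^{(n+i)}(x)-\alpha$ (substitute $y=g^{(n)}(x)$ in $y-\gamma\mid g^{(i)}(y)-\alpha$), whence $\Delta(g^{(n+i)}(x)-\alpha)\ge d^n$; absorbing the finitely many initial values $0\le m<i$ into the constant yields Ineq.~\eqref{eq:delta} with $C_{\alpha,g}=d^{-i}$, and the matching upper bound $\Delta(g^{(m)}(x)-\alpha)\le D^m/p^{mh}=d^m$ (from $\nu^*\ge p^{mh}$) yields $\Delta(g^{(n)}(x)-\alpha)\approx d^n$.
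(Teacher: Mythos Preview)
Your argument is correct and follows a genuinely different route from the paper's. The paper first locates a \emph{non-periodic} preimage $\lambda$ of $\alpha$ (arguing that if all roots of $g(x)-\alpha$ and of $g(x)-\eta$ for each such root $\eta$ were periodic, then $\nu(g(x)-\alpha)=\nu(g(x)-\eta)=D$, forcing $\alpha=\eta$ via Claim~1 of Proposition~\ref{prop:main1} and hence $g(x)=a(x-\alpha)^D+\alpha$); it then exploits pairwise coprimality of the polynomials $g^{(m)}(x)-\lambda$ to pass to some $\gamma$ deep enough that no $g^{(k)}(x)-\lambda$ with $k\ge M$ shares a root with $G'$. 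Your approach bypasses the non-periodic intermediate: you argue directly that the full backward orbit $R=\bigcup_{n\ge0}g^{-n}(\alpha)$ is infinite while the forward orbit $T$ of the finite critical-value set $S$ is finite, and simply choose $\gamma\in R\setminus(T\cup\{\alpha\})$. This is cleaner conceptually and makes the role of the excluded form $a(x-\alpha)^D+\alpha$ more transparent. The paper's route, on the other hand, reuses machinery already established in Proposition~\ref{prop:main1} and keeps everything phrased in terms of polynomial coprimality rather than orbit cardinalities.

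One small point to tighten: your sentence ``$g|_R:R\to R$ is a bijection'' presupposes $g(R)\subseteq R$, which is not automatic (a priori $g(\alpha)$ need not lie in $R$). It does hold once you note that $R$ finite forces $\alpha$ to be $g$-periodic (build an infinite backward chain inside $R$ and apply pigeonhole), but you should say so. Alternatively, skip the bijection phrasing entirely: since $g^{-1}(R)\subseteq R$ and each $\beta\in R$ has at least one preimage, the inequality $|R|\ge|g^{-1}(R)|=\sum_{\beta\in R}|g^{-1}(\beta)|\ge|R|$ forces every fiber $g^{-1}(\beta)$ to be a singleton, which is exactly what you use next.
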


\begin{proof}
First, we claim that there exists a positive integer $j=j(\alpha)$ and an element $\lambda\in \overline{\F}_q$ such that $\lambda$ is not $g$-periodic and $g^{(j)}(\lambda)=\alpha$. Proceeding by contradiction, suppose that all the roots of $g(x)-\alpha$ are $g$-periodic and, if $\eta$ is any root of $g(x)-\alpha$, all the roots of $g(x)-\eta$ are also $g$-periodic. We observe that, for any $\beta\in \overline{\F}_q$, at most one root of  the polynomial $g(x)-\beta$ is $g$-periodic. In particular, the latter implies that $g(x)-\alpha=a_1(x-\eta)^D$ and $g(x)-\eta=a_2(x-\eta_0)^D$ for some $\eta_0\in \overline{\F}_q$. In other words, $\nu(g(x)-\alpha)=\nu(g(x)-\eta)=D=\deg(g)$. From Claim~1 in Proposition~\ref{prop:main1}, we conclude that $\alpha=\eta$ and so $g(x)$ is of the form $a(x-\alpha)^D+\alpha$, contradicting our hypothesis. 

Let $\lambda\in \overline{\F}_q$ and $j>0$ be such that $g^{(j)}(\lambda)=\alpha$ and $\lambda$ is not $g$-periodic. 
Since $G$ is such that $G'(x)$ is not the zero polynomial, it follows that the set of roots of $G'(x)$ is finite. It is direct to verify that, for any $m\ge 0$, no root of $g^{(m)}(x)-\lambda$ is $g$-periodic. In particular, the polynomials $g^{(m)}(x)-\lambda$ with $m\ge 0$ are pairwise relatively prime. Therefore, there exists a positive integer $M=M(\lambda)$ such that the polynomial $g^{(k)}(x)-\lambda$ is relatively prime with $G'(x)$ for any $k\ge M$. Let $\gamma$ be any root of $g^{(M)}(x)-\lambda$.

\begin{enumerate}[]
\item {\bf Claim 1.} {\it For any $n\ge 0$, $\nu(g^{(n)}(x)-\gamma)=p^{nh}$}. 
\item {\it Proof of Claim 1.} From Proposition~\ref{prop:main1} and Ineq.~\eqref{eq:nu}, we have that
\begin{align*}p^{nh} &\le \nu(g^{(n)}(x)-\gamma)\le \max\limits_{\Gamma\in C_n(\gamma)}\left\{\prod_{\beta \in \Gamma}\nu(g(x)-\beta)\right\}\\ {} & =p^{nh}\cdot \max\limits_{\Gamma\in C_n(\gamma)}\left\{\prod_{\beta \in \Gamma}\nu(G(x)-\sigma_{-h}(\beta))\right\},\end{align*}
where $C_n(\gamma)$ is as in Lemma~\ref{lem:mult-1}. We observe that, if there exists $n\ge 0$, $\Gamma\in C_n(\gamma)$ and $\beta\in \Gamma$ such that $\nu(G(x)-\sigma_{-h}(\beta))>1$, then there exists an element $\beta_0\in \overline{\F}_q$ such that $G(\beta_0)=\sigma_{-h}(\beta)$ and $G'(\beta_0)=0$. However, $g(\beta_0)=\beta\in C_n(\gamma)$ and so $\beta_0$ is a root of $g^{(t)}(x)-\gamma$ for some $0\le t\le n+1$. The latter implies that $\beta_0$ is a root of $g^{(t+M)}(x)-\lambda$, which is relatively prime with $G'(x)$ since $t+M\ge M$. But this is contradiction with the equality $G'(\beta_0)=0$. In particular, for any $n\ge 0$, we have that 
$$\max\limits_{\Gamma\in C_n(\gamma)}\left\{\prod_{\beta \in \Gamma}\nu(G(x)-\sigma_{-h}(\beta))\right\}=1,$$
proving the claim.
\end{enumerate}

Therefore, for $i=j+M$, $g^{(i)}(\gamma)=\alpha$ and $\nu(g^{(n)}(x)-\gamma)=p^{nh}$ for any $n\ge 0$. We observe that, since $g^{(i)}(\gamma)=\alpha$, $g^{(n)}(x)-\gamma$ divides $g^{(n+i)}(x)-\alpha$ and so we have that 
$$\Delta(g^{(n)}(x)-\alpha)\ge \Delta(g^{(\chi_i(n))}(x)-\gamma),$$
where $\chi_i(n)=\max\{n-i, 0\}$. In addition, we have the trivial inequality
$$\Delta(g^{(m)}(x)-\gamma)\ge \frac{\deg(g^{(m)}(x)-\gamma)}{\nu(g^{(m)}(x)-\gamma)}=d^m,$$
for any $m\ge 0$ and so Ineq.~\eqref{eq:delta} holds with $C_{\alpha, g}=\frac{1}{d^i}$. To finish the proof we observe that, from Proposition~\ref{prop:main1}, $\nu^*(g^{(n)}(x)-\alpha)\ge p^{nh}$ and so 
$$\Delta(g^{(n)}(x)-\alpha)\le \frac{\deg(g^{(n)}(x)-\alpha)}{\nu^*(g^{(n)}(x)-\alpha)}\le d^n.$$
In particular, $\Delta(g^{(n)}(x)-\alpha)\approx d^n$.
\end{proof}

We observe that Lemma~\ref{lem:delta} immediately implies item (iii) of Theorem~\ref{thm:main1} and the comment thereafter. In fact,  from Lemma~\ref{lem:reduction-1}, we only need to consider the pairs $(f, g)$ as in Theorem~\ref{thm:main1} with $f\in \F_q[x]$ irreducible. Moreover, if $f$ is irreducible of degree $k$ and $\alpha\in \F_{q^k}$ is any of its roots, Proposition~\ref{prop:identities} entails that $\Delta_{f, g}(n)=k\cdot \Delta(g^{(n)}(x)-\alpha)$. Since the pair $(f, g)$ is neither critical nor $p$-critical, we have that $\alpha$ and $g$ satisfy the conditions of Lemma~\ref{lem:delta} and so $\Delta(g^{(n)}(x)-\alpha)\approx d^n$, where $d$ is the degree of the $p$-reduction $G\in \F_q[x]$ of $g$.

\subsection{On the degree and number of irreducible factors of $P_n[f, g](x)$} Here we provide the proof of Theorem~\ref{thm:main2}. We start with the bound on $M_{f, g}(n)$.

\begin{lemma}\label{lem:max-degree}
Let $f, g\in \F_q[x]$ be polynomials of degrees $k\ge 1$ and $D\ge 2$, respectively, such that $f$ is irreducible and the pair $(f, g)$ is neither critical nor $p$-critical. Then $M_{f, g}(n)\gg n$.
\end{lemma}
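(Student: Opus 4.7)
The plan is to combine Proposition~\ref{prop:identities} with the exponential lower bound on the squarefree part from Lemma~\ref{lem:delta} and a Galois-theoretic counting argument. First, since $f$ is irreducible of degree $k$, I would set $Q=q^k$ and pick a root $\alpha\in\F_Q$ of $f$. Proposition~\ref{prop:identities} then gives $M_{f,g}(n)=k\cdot M_Q(g^{(n)}(x)-\alpha)$, so the lemma reduces to showing $M_Q(g^{(n)}(x)-\alpha)\gg n$.

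Next, I would invoke Lemma~\ref{lem:delta}, whose hypotheses are satisfied since $f$ is irreducible and $(f,g)$ is neither critical nor $p$-critical: it yields $|\mathcal R_n^\alpha|=\Delta(g^{(n)}(x)-\alpha)\ge Cd^n$ for some constant $C>0$, where $\mathcal R_n^\alpha=\{\beta\in\overline{\F}_q:g^{(n)}(\beta)=\alpha\}$ is the set of distinct roots and $d\ge 2$ is the degree of the $p$-reduction of $g$. Writing $P_n=\F_Q(\mathcal R_n^\alpha)=\F_{Q^{L_n}}$ for the splitting field of $g^{(n)}(x)-\alpha$ over $\F_Q$, the elementary cardinality comparison $Q^{L_n}=|P_n|\ge|\mathcal R_n^\alpha|\ge Cd^n$ gives the linear lower bound $L_n\gg n$, where $L_n$ is the least common multiple of the degrees of the irreducible factors of $g^{(n)}(x)-\alpha$ over $\F_Q$.

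The main obstacle will be to pass from this linear lower bound on $L_n$ (the lcm of the factor degrees) to a linear lower bound on $M_Q(g^{(n)}(x)-\alpha)$ (the maximum of those degrees). Applied in isolation, a crude estimate such as $\mathrm{lcm}(1,\ldots,t)\le 4^t$ would only yield $M_Q(g^{(n)}(x)-\alpha)\gg\log n$, which is too weak, so the dynamical tower structure is essential here. Specifically, $\mathcal R_{n-1}^\alpha=g(\mathcal R_n^\alpha)$ yields a nested chain $P_{n-1}\subseteq P_n$ that cannot stabilize, since otherwise $|\mathcal R_n^\alpha|$ would be bounded, contradicting Lemma~\ref{lem:delta}. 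My plan to close the gap is to construct a backward orbit $\beta_0=\alpha,\beta_1,\beta_2,\ldots$ with $g(\beta_i)=\beta_{i-1}$, chosen greedily to maximize $[\F_Q(\beta_i):\F_Q(\beta_{i-1})]$ at each step; the identity $\deg_Q(\beta_n)=\prod_{i=1}^{n}[\F_Q(\beta_i):\F_Q(\beta_{i-1})]$ then reduces the problem to bounding from below the frequency with which the local extension degree exceeds $1$. The technical heart of the argument is to use the non-criticality of $(f,g)$ to guarantee this with enough regularity along a suitable branch, so that $\deg_Q(\beta_n)\gg n$ and hence $M_Q(g^{(n)}(x)-\alpha)\ge\deg_Q(\beta_n)\gg n$; this book-keeping on the local degrees is what I expect to be the most delicate point.
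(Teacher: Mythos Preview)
Your opening steps match the paper exactly: reduce via Proposition~\ref{prop:identities} to $M_Q(g^{(n)}(x)-\alpha)\gg n$, then invoke Lemma~\ref{lem:delta} to get $\Delta(g^{(n)}(x)-\alpha)\ge C\,d^n$. The divergence comes at the counting step, where you create an obstacle that is not actually there.

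You bound the number of distinct roots by the size of the \emph{splitting field} $\F_{Q^{L_n}}$, which only yields $L_n\gg n$ for the \emph{lcm} of the factor degrees, and then you face the genuine difficulty of descending from the lcm to the maximum. The paper bypasses this entirely: every root of $g^{(n)}(x)-\alpha$ lies in $\F_{Q^i}$ for some $i\le M(n):=M_Q(g^{(n)}(x)-\alpha)$, hence in the union $\mathcal C_{M(n)}=\bigcup_{1\le i\le M(n)}\F_{Q^i}$, whose cardinality is at most $Q+Q^2+\cdots+Q^{M(n)}<2Q^{M(n)}$. Thus
\[
C\,d^n\le \Delta(g^{(n)}(x)-\alpha)\le |\mathcal C_{M(n)}|<2Q^{M(n)},
\]
and taking logarithms gives $M(n)\gg n$ immediately. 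The point is that the right container for the root set is the union of all subfields of degree at most $M(n)$, not the single field of degree $L_n$; this directly controls the maximum rather than the lcm.

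Your proposed fix via a greedily chosen backward orbit is therefore unnecessary. It is also not clearly workable as stated: you would need to show that along some branch the local extension degree $[\F_Q(\beta_i):\F_Q(\beta_{i-1})]$ exceeds $1$ a positive proportion of the time, but non-criticality of $(f,g)$ gives no obvious control over how often a preimage can be defined over the same field as the image. So drop the splitting-field detour and use the union bound instead.
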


\begin{proof}
Let $\alpha\in\overline{\F}_q$ be any root of $f$. From Proposition~\ref{prop:identities}, we only need to prove that $M_{Q}(g^{(n)}(x)-\alpha)\gg n$ for $Q=q^k$. If we set $M(n)=M_Q(g^{(n)}(x)-\alpha)$, it follows that the roots of $g^{(n)}(x)-\alpha$ lie in $\mathcal C_{M(n)}=\cup_{1\le i\le M(n)}\F_{Q^i}$. Since the pair $(f, g)$ is neither critical or $p$-critical, $g$ is not of the form $\beta(x-\alpha)^D+\alpha$ or of the form $ax^{p^h}+b$. Therefore, from Lemma~\ref{lem:delta}, it follows that $\Delta(g^{(n)}(x)-\alpha)\ge c\cdot d^n$ for some $c>0$ (that does not depend on $n$), where $d>1$ is the degree of the $p$-reduction of $g$. In particular, we have that
$$c\cdot d^n\le \Delta(g^{(n)}(x)-\alpha)\le |\mathcal C_{M(n)}|\le Q^{M(n)}+\ldots+Q<2Q^{M(n)}.$$
Thus, $M(n)> \frac{n\log (d)+\log c-\log 2}{\log Q}\gg n,$
since $Q, d>1$.
\end{proof}

The following result proves the remaining statement in Theorem~\ref{thm:main2}, concerning the arithmetic function $m_{f, g}(n)$.

\begin{lemma}\label{lem:min-1}
Let $f\in \F_q[x]$ be an irreducible polynomial of degree $k\ge 1$ and let $\alpha\in \F_{q^k}$ be any of its roots. If $g$ is any polynomial of degree $D\ge 1$, then the following are equivalent:

\begin{enumerate}[(i)]
\item there exists a constant $c>0$ such that $m_{f, g}(n)\le c$ for any $n\ge 0$;
\item the sequence $\{m_{f, g}(n)\}_{n
\ge 0}$ is eventually constant;
\item $\alpha$ is $g$-periodic;
\item $m_{f, g}(n)=k$ for any $n\ge 0$.
\end{enumerate}
\end{lemma}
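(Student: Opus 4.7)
My plan is to prove the cycle of implications (iv)$\Rightarrow$(ii)$\Rightarrow$(i)$\Rightarrow$(iii)$\Rightarrow$(iv). The first two implications are immediate from the definitions. The translation layer underlying everything else will be Proposition~\ref{prop:identities}: setting $Q = q^k$, one has $m_{f,g}(n) = k \cdot m_Q(g^{(n)}(x)-\alpha)$, so every statement rephrases as an assertion about $m_Q(g^{(n)}(x)-\alpha)$ and the reversed $g$-orbit of $\alpha$.

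For (i)$\Rightarrow$(iii), I would use a pigeonhole argument. If $m_{f,g}(n) \le c$ for all $n$, then for each $n$ there exists a root $\beta_n \in \overline{\F}_q$ of $g^{(n)}(x)-\alpha$ whose minimal polynomial over $\F_Q$ has degree at most $c/k$, so $\beta_n$ lies in the finite set $\bigcup_{1 \le i \le c/k}\F_{Q^i}$. Consequently there exist indices $n_1 < n_2$ with $\beta_{n_1} = \beta_{n_2} =: \beta$, and the chain
$\alpha = g^{(n_2)}(\beta) = g^{(n_2-n_1)}(g^{(n_1)}(\beta)) = g^{(n_2-n_1)}(\alpha)$
shows that $\alpha$ is $g$-periodic.

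For (iii)$\Rightarrow$(iv), suppose $g^{(i)}(\alpha) = \alpha$ with $i\ge 1$. Then $g^{(ji)}(\alpha) = \alpha$ for every $j\ge 1$, so $\alpha$ is a root of $g^{(ji)}(x)-\alpha$; since $\alpha\in \F_Q$, the linear factor $(x-\alpha)\in \F_Q[x]$ divides $g^{(ji)}(x)-\alpha$, forcing $m_Q(g^{(ji)}(x)-\alpha) = 1$ and hence $m_{f,g}(ji) = k$. Combined with the trivial identity $m_{f,g}(0) = \deg(f) = k$ and the monotonicity of $m_{f,g}(n)$ from Corollary~\ref{cor:nondecreasing} (applied to $\F_Q$, valid when $D\ge 2$), for any $n \ge 0$ I can pick $j$ with $ji \ge n$ and sandwich $k = m_{f,g}(0) \le m_{f,g}(n) \le m_{f,g}(ji) = k$, giving (iv). The degenerate case $D=1$ is handled separately and trivially: then $g$ restricts to a bijection of the finite set $\F_Q$, so every $\alpha \in \F_Q$ is $g$-periodic, and each polynomial $g^{(n)}(x)-\alpha$ is linear with root in $\F_Q$, so $m_{f,g}(n) = k$ identically. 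The main subtle move throughout is the pigeonhole step in (i)$\Rightarrow$(iii), which is what converts a purely quantitative bound on irreducible-factor degrees into the algebraic fact that $\alpha$ returns to itself under $g$.
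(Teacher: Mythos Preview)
Your proof is correct and follows the same strategy as the paper's: monotonicity of $m_{f,g}$ combined with a pigeonhole argument to extract $g$-periodicity of $\alpha$, then periodicity plus monotonicity to force $m_{f,g}\equiv k$. The one noteworthy variation is that you pigeonhole directly on roots $\beta_n$ over $\F_Q=\F_{q^k}$, which is slightly cleaner than the paper's pigeonhole on irreducible factors of $P_n[f,g]$ over $\F_q$ (the latter route requires a short Frobenius computation to pass from $g^{(M-N)}(\alpha^{q^s})=\alpha^{q^t}$ back to periodicity of $\alpha$ itself); you also explicitly dispatch the degenerate case $D=1$, which the paper leaves implicit.
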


\begin{proof}
From Proposition~\ref{prop:identities} and Corollary~\ref{cor:nondecreasing}, $m_{f, g}(n)$ is nondecreasing and so (i) implies (ii). To see that (ii) implies (iii), suppose that $m_{f, g}(n)=R$ for some positive integer $R$ and any $n\ge n_0$, where $n_0>0$. Since there exist a finite number of irreducible polynomials of degree $R$ over $\F_q$, there exist positive integers $M> N$ and an irreducible polynomial $h\in \F_q[x]$ of degree $R$ such that $h(x)$ divides $P_N[f, g](x)$ and $P_M[f, g](x)$. The latter implies that there exists a root $\gamma\in\overline{\F}_q$ of $h$ that is a root of $g^{(N)}(x)-\alpha^{q^s}$ and $g^{(M)}(x)-\alpha^{q^t}$ for some $1\le s, t\le k$. Therefore, $g^{(N)}(\gamma)=\alpha^{q^s}$ and $g^{(M)}(\gamma)=\alpha^{q^t}$ and so $g^{(M-N)}(\alpha^{q^s})=\alpha^{q^t}$. Since $g\in \F_q[x]$, we have that $g(x^q)=g(x)^{q}$ and so $g^{(M-N)}(\alpha)=\alpha^{q^{t-s}}$. The latter implies that $g^{(k(M-n))}(\alpha)=\alpha^{q^{k(t-s)}}=\alpha$, i.e., $\alpha$ is $g$-periodic. To prove that (iii) implies (iv) we observe that $m_{f, g}(0)=\deg(f)=k$ and so $m_{f, g}(n)\ge k$ for any $n\ge 0$, since $m_{f, g}(n)$ is nondecreasing. However, if $\alpha$ is $g$-periodic, $g^{(i)}(\alpha)=\alpha$ for some positive integer $i$ and so $\alpha$ is a root of $g^{(mi)}(x)-\alpha$ for any $m\ge 0$. In particular, $m_{f, g}(mi)=k$ for any $m\ge 0$ and again, since $m_{f, g}(n)$ is nondecreasing, we have that $m_{f, g}(n)=k$ for any $n\ge 0$. Finally, (iv) trivially implies (i). 
\end{proof}

\section{The cases $g$ monomial and $q$-linearized}
In this section we explicitly compute the arithmetic functions associated to the factorization of polynomials $P_n[f, g](x)$ when $g$ is a monomial or a $q$-linearized polynomial, i.e., a polynomial of the form $\sum_{i=0}^ma_ix^{q^i}$. In particular, we provide a constructive proof of Theorem~\ref{thm:main3}. We are mainly interested in the case in which $f$ is irreducible, since the general case can be obtained from the identities of Lemma~\ref{lem:reduction-1}. In the following lemma, we provide a reduction to the case in which $g'(x)$ is not the zero polynomial.

\begin{lemma}
Suppose that $g(x)=x^m$ or $g(x)=\sum_{i=0}^ma_ix^{q^i}\in \F_q[x]$, let $G\in \F_q[x]$ be the $p$-reduction of $g$ and write $g(x)=G(x)^{p^h}$. Then, for any $n\ge 0$, $e_{f, g}(n)=p^{nh}\cdot e_{f, G}(n)$, $E_{f, g}(n)=p^{nh}\cdot E_{f, G}(n)$ and all the other arithmetic functions in Definition~\ref{def:main} coincide at every $n\ge 0$ when evaluated for the pairs $(f, g)$ and $(f, G)$.
\end{lemma}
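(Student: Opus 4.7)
The plan is to derive everything from a single key identity: for every $n \ge 0$,
$$g^{(n)}(x) \;=\; G^{(n)}(x)^{p^{nh}}.$$
Once this is in place, the claimed relations between the arithmetic functions of $(f, g)$ and $(f, G)$ will follow from a clean factorization of $P_n[f, g](x)$ combined with Lemma~\ref{lem:frobenius}.

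First I would verify the identity in both cases. For $g(x) = x^m$ with $m = p^h d$ and $G(x) = x^d$, it is immediate since $g^{(n)}(x) = x^{m^n} = x^{p^{nh}d^n} = G^{(n)}(x)^{p^{nh}}$. For $q$-linearized $g$, I would write $g(x) = G(x)^{q^j}$, where $j$ is the smallest index whose coefficient is nonzero, so that $p^h = q^j$ and $G \in \F_q[x]$ is itself $q$-linearized with $G' \ne 0$. The algebraic key is the Frobenius-commutation $G(y^q) = G(y)^q$, valid for every $q$-linearized polynomial with coefficients in $\F_q$; iterating it $j$ times yields $G(y^{q^j}) = G(y)^{q^j}$. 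An induction on $n$ based on $g^{(n+1)}(x) = G(g^{(n)}(x))^{q^j}$ together with this commutation then delivers the identity.

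Next I would combine the identity with the elementary characteristic-$p$ relation $f(y^{p^{nh}}) = \sigma_{-nh}(f)(y)^{p^{nh}}$, valid for every $f \in \F_q[x]$, to obtain
$$P_n[f, g](x) \;=\; f\bigl(G^{(n)}(x)^{p^{nh}}\bigr) \;=\; \sigma_{-nh}(f)\bigl(G^{(n)}(x)\bigr)^{p^{nh}}.$$
A crucial auxiliary observation is that in both cases of the lemma $\sigma_{-nh}$ fixes $G$, hence also $G^{(n)}$: for $g(x) = x^m$ because $G(x) = x^d \in \F_p[x]$, and for $q$-linearized $g$ because $nh = nsj$ is a multiple of $s$ (where $q = p^s$), so $\sigma_{-nh}$ acts trivially on $\F_q$. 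Consequently $\sigma_{-nh}(f)(G^{(n)}(x)) = \sigma_{-nh}\bigl(P_n[f, G](x)\bigr)$, and altogether
$$P_n[f, g](x) \;=\; \sigma_{-nh}\!\bigl(P_n[f, G](x)\bigr)^{p^{nh}}.$$

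The conclusion is then a routine readout. By Lemma~\ref{lem:frobenius}, the automorphism $\sigma_{-nh}$ sends the irreducible factorization of $P_n[f, G]$ over $\F_q$ bijectively onto that of $\sigma_{-nh}(P_n[f, G])$, preserving the number of distinct factors, their degrees, and their multiplicities. Raising to the $p^{nh}$-th power then multiplies every multiplicity by $p^{nh}$ without changing the factors themselves. This yields at once $M_{f, g}(n) = M_{f, G}(n)$, $m_{f, g}(n) = m_{f, G}(n)$, $N_{f, g}(n) = N_{f, G}(n)$, $\Delta_{f, g}(n) = \Delta_{f, G}(n)$, $A_{f, g}(n) = A_{f, G}(n)$, $E_{f, g}(n) = p^{nh} E_{f, G}(n)$ and $e_{f, g}(n) = p^{nh} e_{f, G}(n)$. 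The main obstacle is the identity $g^{(n)}(x) = G^{(n)}(x)^{p^{nh}}$ in the $q$-linearized case, which genuinely requires the Frobenius-commutation peculiar to $q$-linearized polynomials over $\F_q$ (and is the reason the statement singles out monomials and $q$-linearized polynomials rather than, say, arbitrary additive polynomials); everything subsequent is bookkeeping with the Frobenius twist.
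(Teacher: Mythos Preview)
Your proposal is correct and follows essentially the same route as the paper's proof: both establish the identity $P_n[f,g](x)=\sigma_{-nh}\bigl(P_n[f,G](x)\bigr)^{p^{nh}}$ via the Frobenius-commutation $G(x^{p^h})=G(x)^{p^h}$ (equivalently $G\in\F_{p^h}[x]$), and then read off the arithmetic functions using Lemma~\ref{lem:frobenius}. Your write-up merely makes the intermediate step $g^{(n)}(x)=G^{(n)}(x)^{p^{nh}}$ and the reason $\sigma_{-nh}$ fixes $G$ more explicit than the paper does.
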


\begin{proof}
We observe that, if $g(x)=\sum_{i=0}^ma_ix^{q^i}$, then $p^h$ must be a power of $q$ and so $G(x^{p^h})=G(x)^{p^h}$; the last equality trivially holds if $g$ is a monomial. In particular, $\sigma_h(G)=G$, i.e., $G\in \F_{p^h}[x]$. Therefore, it follows by induction on $n\ge 0$ that
$$P_n[f, g](x)=f\left(G^{(n)}(x)^{p^{nh}}\right)=[\sigma_{-nh}(f)(G^{(n)}(x))]^{p^{nh}}=\sigma_{-nh}\left(P_n[f, G](x)\right)^{p^{nh}}.$$
Fix $n\ge 0$ and let 
$$P_n[f, G](x)=h_1(x)^{e_1}\cdots h_r(x)^{e_r},$$
be the factorization of $P_n[f, G]$ into irreducible polynomials over $\F_q$. From Lemma~\ref{lem:frobenius}, it follows that the factorization of $\sigma_{-nh}\left(P_n[f, G](x)^{p^{{nh}}}\right)=P_n[f, g](x)$ into irreducible polynomials over $\F_q$ is
$$P_n[f, g](x)=H_1(x)^{p^{nh}\cdot e_1}\cdots H_r(x)^{p^{nh}\cdot e_r},$$
where $H_i(x)=\sigma_{-nh}(h_i(x))$ for any $1\le i\le r$.
\end{proof}

We emphasize that the previous lemma holds for any polynomial $g\in\F_q[x]$ with $p$-reduction $G$ such that $g(x)=G(x)^{p^h}$ and $G\in \F_{p^h}[x]$. So, for the rest of this section, we may assume that $g(x)$ is a monomial or a $q$-linearized polynomial such that $g'(x)$ is not the zero polynomial.

\subsection{The monomial case} Here we consider $g(x)=x^D$, where $\gcd(D, p)=1$ and $f\in \F_q[x]$ an irreducible polynomial not of the form $ax$. We introduce some useful definitions.

\begin{definition}
Let $f, g\in\F_q[x]$ be polynomials such that $\gcd(f, g)=1$ and let $a, b$ be positive integers such that $\gcd(a, b)=1$.

\begin{enumerate}[(a)]
\item the order $\ou(g, f)$ of $g$ modulo $f$ is the least positive integer $s$ such that $f(x)$ divides $g(x)^s-1$;
\item if $f$ is not divisible by $x$, $\ord(f):=\ou(x, f)$ is the order of $f$;
\item the order $\ord(a, b)$ of $a$ modulo $b$ is the least positive integer $i$ such that $$a^i\equiv 1\pmod b.$$
\end{enumerate}
\end{definition}

Butler~\cite{B55} obtained the following result.

\begin{theorem}\label{thm:butler}
Let $f\in \F_q[x]$ be an irreducible polynomial of degree $k$, not of the form $ax$ such that $e=\ord(f)$. Let $m$ be a positive integer such that $\gcd(m, q)=1$ and $m=m_1m_2$, where $\gcd(m_1, e)=1$ and each prime factor of $m_2$ divides $e$. Then
\begin{enumerate}[(i)]
\item each irreducible factor of $f(x^m)$ has order of the form $Mm_2e$, where $M|m_1$;
\item if $M|m_1$, then $f(x^m)$ has exactly $\frac{nm_2\varphi(M)}{\ord(q, {Mm_2e})}$ irreducible factors of degree $\ord(q, {Mm_2e})$ with order $Mm_2e$, where $\varphi$ is the Euler's Totient function.
\end{enumerate}
\end{theorem}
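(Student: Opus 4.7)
The plan is to work with roots and multiplicative orders in $\overline{\F}_q^*$ and then translate to a factor count over $\fq$ via Galois theory. Since $\gcd(m,q)=1$, the polynomial $f(x^m)$ is separable. Let $\alpha$ be any root of $f$: then the $k$ distinct roots of $f$ are the Frobenius conjugates $\alpha,\alpha^q,\dots,\alpha^{q^{k-1}}\in \F_{q^k}^*$, all of order $e=\ord(f)$. Every root $\beta$ of $f(x^m)$ satisfies $\beta^m=\alpha^{q^i}$ for a unique $0\le i\le k-1$, so every such $\beta$ lies in the cyclic subgroup $\mu_{me}\subset \overline{\F}_q^*$ of $me$-th roots of unity.

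For part (i), fix a root $\beta$ of $f(x^m)$. Since $\beta^{me}=1$, and any $t$ with $\beta^t=1$ forces $\alpha^{q^i t}=1$, hence $e\mid t$, the order of $\beta$ has the form $ed$ with $d\mid m$. The constraint $\ord(\beta^m)=e$ then translates to $\gcd(ed,m)=d$. A prime-by-prime analysis that uses the two coprimality hypotheses, namely $\gcd(m_1,e)=1$ and the fact that every prime divisor of $m_2$ divides $e$, forces $v_p(d)\le v_p(m_1)$ for each prime $p\mid m_1$ and $v_p(d)=v_p(m_2)$ for each prime $p\mid m_2$. Hence $d=Mm_2$ with $M\mid m_1$, establishing (i).

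For part (ii), I would exploit the decomposition $\mu_{me}\cong \mu_{m_1}\times\mu_{m_2 e}$, valid because $\gcd(m_1,m_2 e)=1$. Under this isomorphism each root $\alpha^{q^i}$ of $f$ corresponds to $(1,\alpha^{q^i})$, and writing $\beta\leftrightarrow(\beta_1,\beta_2)$, the equation $\beta^m=\alpha^{q^i}$ decouples as $\beta_1^m=1$, which is automatic on $\mu_{m_1}$, and $\beta_2^m=\alpha^{q^i}$ in $\mu_{m_2 e}$. Hence $\beta_1$ ranges freely over $\mu_{m_1}$, of which $\varphi(M)$ elements have order exactly $M$. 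On the second factor, since $\gcd(m_1,m_2 e)=1$ the map $x\mapsto x^{m_1}$ is an automorphism of $\mu_{m_2 e}$, so $\beta_2^m=\alpha^{q^i}$ reduces to solving $\gamma^{m_2}=\alpha^{q^i}$ for $\gamma\in\mu_{m_2 e}$; there are exactly $m_2$ such $\gamma$, and a valuation argument of the same flavour as in (i) shows that each one has order exactly $em_2$. Multiplying the two counts yields $m_2\varphi(M)$ roots $\beta$ of order $Mm_2 e$ lying above each of the $k$ roots of $f$, for a grand total of $km_2\varphi(M)$ roots of that order. Since any $\beta\in\overline{\F}_q^*$ of order $N$ coprime to $q$ generates $\F_q(\beta)=\F_{q^s}$ with $s=\ord(q,N)$, every irreducible factor of $f(x^m)$ whose roots have order $Mm_2 e$ has degree exactly $\ord(q,Mm_2 e)$, and dividing the root count by this common degree yields $\tfrac{km_2\varphi(M)}{\ord(q,Mm_2 e)}$.

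The main obstacle I anticipate is the prime-by-prime bookkeeping: both the identification $d=Mm_2$ in (i) and the claim that every $m_2$-th root of $\alpha^{q^i}$ in $\mu_{m_2 e}$ automatically attains order $em_2$ rest on a delicate interplay between the $p$-adic valuations of $m_2$ (which, by hypothesis, is supported on primes dividing $e$) and those of $e$ itself. The decomposition $m=m_1m_2$ is engineered precisely to make these valuation matches clean, but one must set up the inequalities carefully at primes where $v_p(e)$ and $v_p(m_2)$ are both large, in order to ensure that the minimum in $\gcd(ed,m)$ is realised on the side one expects.
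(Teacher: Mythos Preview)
The paper does not prove this theorem: it is stated as Butler's 1955 result and quoted without proof, so there is no in-paper argument to compare against. Your proposal is a correct and self-contained proof of the classical statement.

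The valuation bookkeeping you flag as the main obstacle is in fact routine. For part (i), at a prime $p\mid m_2$ (hence $p\mid e$) the condition $\min(v_p(e)+v_p(d),\,v_p(m_2))=v_p(d)$ cannot be realised with $v_p(d)\le v_p(m_2)-v_p(e)$ unless $v_p(e)=0$, so one is forced into $v_p(d)=v_p(m_2)$, exactly as you need. For part (ii), if $\gamma\in\mu_{m_2e}$ has order $d'$ and $\gamma^{m_2}$ has order $e$, then at every prime $p\mid m_2$ the relation $v_p(d')=v_p(e)+\min(v_p(d'),v_p(m_2))$ again rules out $v_p(d')\le v_p(m_2)$ (since $v_p(e)\ge 1$), whence $v_p(d')=v_p(e)+v_p(m_2)$; the primes dividing $e$ but not $m_2$ behave trivially. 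So every $m_2$-th root of $\alpha^{q^i}$ in $\mu_{m_2e}$ automatically has the maximal order $m_2e$, and your count goes through.

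One cosmetic point: the ``$n$'' in the displayed formula of the theorem is a typo for ``$k$'' (the degree of $f$), as is clear from the analogous formula in the corollary that follows. Your final count $km_2\varphi(M)/\ord(q,Mm_2e)$ is the intended one.
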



From~Theorem~\ref{thm:butler}, the following result is straightforward.
\begin{corollary}\label{cor:monomial}
Let $f\in \F_q[x]$ be an irreducible polynomial of degree $k$, not of the form $ax$ such that $e=\ord(f)$. Let $D$ be a positive integer such that $\gcd(D, p)=1$ and write $D=d_1d_2$, where $\gcd(d_1, e)=1$ and each prime factor of $d_2$ divides $e$. Then, for $g(x)=x^D$ and any $n\ge 0$, the following hold:
\begin{enumerate}[(i)]
\item $E_{f, g}(n)=e_{f, g}(n)=1$;
\item $\Delta_{f, g}(n)=k\cdot D^n$;
\item $M_{f, g}(n)=\ord(q, {D^ne})$;
\item $m_{f, g}(n)=\ord(q, {d_2^ne})$;
\item $N_{f, g}(n)=\sum_{M|d_1^n}\frac{kd_2^n\varphi(M)}{\ord(q, {Md_2^ne})}.$
\end{enumerate}
\end{corollary}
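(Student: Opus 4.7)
The plan is to apply Theorem~\ref{thm:butler} directly to the polynomial $P_n[f,g](x)=f(x^{D^n})$, exploiting the natural factorization $D^n=d_1^n\cdot d_2^n$ that is compatible with the hypothesis on $D=d_1d_2$. Since $\gcd(d_1,e)=1$ we have $\gcd(d_1^n,e)=1$, and since every prime factor of $d_2$ divides $e$, the same holds for every prime factor of $d_2^n$. Moreover $\gcd(D,p)=1$ implies $\gcd(D^n,q)=1$. Hence Theorem~\ref{thm:butler} applies with $m=D^n$, $m_1=d_1^n$, $m_2=d_2^n$, and yields that the irreducible factors of $f(x^{D^n})$ are parametrized by divisors $M\mid d_1^n$: for each such $M$ there are exactly $\frac{kd_2^n\varphi(M)}{\ord(q,Md_2^ne)}$ irreducible factors of degree $\ord(q,Md_2^ne)$ and order $Md_2^ne$.

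For items (i) and (ii), the plan is to check that the factorization produced by Theorem~\ref{thm:butler} accounts for the full degree of $f(x^{D^n})$. Summing degrees times multiplicities of Butler's data (assuming distinctness) gives
$$\sum_{M\mid d_1^n}\frac{kd_2^n\varphi(M)}{\ord(q,Md_2^ne)}\cdot \ord(q,Md_2^ne)=kd_2^n\sum_{M\mid d_1^n}\varphi(M)=kd_2^n\cdot d_1^n=kD^n=\deg f(x^{D^n}).$$
Since the total degree is already achieved, the irreducible factors listed by Butler's theorem must be pairwise distinct, so $f(x^{D^n})$ is squarefree. This immediately gives $E_{f,g}(n)=e_{f,g}(n)=1$ and $\Delta_{f,g}(n)=\deg f(x^{D^n})=kD^n$.

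For items (iii) and (iv), I would use the elementary observation that if $a\mid b$ are positive integers coprime to $q$, then $\ord(q,a)\mid \ord(q,b)$, hence $\ord(q,a)\le \ord(q,b)$. Applied to the parameters $Md_2^ne$ with $M\mid d_1^n$, the maximum of $\ord(q,Md_2^ne)$ over $M\mid d_1^n$ is attained at $M=d_1^n$, giving $M_{f,g}(n)=\ord(q,d_1^n d_2^n e)=\ord(q,D^n e)$, while the minimum is attained at $M=1$, giving $m_{f,g}(n)=\ord(q,d_2^n e)$. Finally item (v) is just the total count of irreducible factors, obtained by summing over all $M\mid d_1^n$ the number of factors of each type provided by Theorem~\ref{thm:butler}.

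There is essentially no obstacle beyond bookkeeping: the real content is Butler's theorem; the corollary only requires us to recognize that the decomposition $D=d_1d_2$ is preserved under taking $n$-th powers (so the hypothesis transfers to $D^n$) and to exploit monotonicity of $\ord(q,\cdot)$ with respect to divisibility in order to extract the extreme degrees. The mildest subtlety is justifying squarefreeness for (i); rather than arguing via derivatives (which is possible since $\gcd(D^n,p)=1$ and $f$ is separable with $f\neq ax$), the degree-counting argument above reads squarefreeness off Butler's theorem without additional work.
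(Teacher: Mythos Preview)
Your proposal is correct and matches the paper's approach exactly: the paper simply states that the corollary is ``straightforward'' from Theorem~\ref{thm:butler} and omits the details, and you have supplied precisely the natural details---apply Butler with $m=D^n$, $m_1=d_1^n$, $m_2=d_2^n$, read off (v) as the total count, use divisibility-monotonicity of $\ord(q,\cdot)$ for (iii)--(iv), and deduce (i)--(ii) from the degree count. The only cosmetic point is that in your squarefreeness argument the distinctness of the factors across different $M$ is not an assumption but a consequence of Butler's classification by order: since $\gcd(d_1,d_2 e)=1$, the integers $Md_2^n e$ for $M\mid d_1^n$ are pairwise distinct, so the factors are automatically disjoint and your degree identity then forces all multiplicities to equal~$1$.
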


In order to estimate the functions appearing in Corollary~\ref{cor:monomial}, we need to find bounds for orders $\ord(a, b)$, where the prime factors of $b$ are fixed. In this context, the prime valuation of integers is required.

\begin{definition}
For an integer $a\ne 0$ and $r$ a prime number, $\nu_r(a)$ denotes the greatest nonnegative integer $s$ such that $r^s$ divides $a$.
\end{definition}

The following lemma is a particular case of the Lifting the Exponent Lemma (LTE), a famous result in the Olympiad folklore. Its proof easily follows by induction and so we omit the details.

\begin{lemma}\label{lem:LTE} Let $r$ be a prime and $a$ a positive integer such that $r$ divides $a-1$. For any positive integer $n$, the following hold:

\begin{enumerate}[(i)]
\item if $r$ is odd, $\nu_r(a^n-1)=\nu_r(a-1)+\nu_r(n)$;
\item if $r=2$, $\nu_2(a^n-1)=\nu_2(a^2-1)+\nu_2(n)-1$ if $n$ is even and $\nu_2(a^n-1)=\nu_2(a-1)$ if $n$ is odd. In particular, if $a\equiv 1\pmod 4$, $\nu_2(a^n-1)=\nu_2(a-1)+\nu_2(n)$.
\end{enumerate}
\end{lemma}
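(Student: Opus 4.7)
The plan is to reduce both parts to the single identity
\[
a^n-1 \;=\; (a-1)\sum_{i=0}^{n-1}a^i
\]
and then control the $r$-valuation of the second factor via a careful binomial expansion. For part (i), set $s=\nu_r(a-1)\ge 1$ and write $a=1+kr^s$ with $\gcd(k,r)=1$. The crucial base step is the claim that $\nu_r(a^r-1)=s+1$. Using $a^i=(1+kr^s)^i\equiv 1+ikr^s\pmod{r^{s+1}}$ (valid because $(kr^s)^2$ has $r$-valuation $2s\ge s+1$), I would sum from $i=0$ to $r-1$ to get
\[
\sum_{i=0}^{r-1}a^i \;\equiv\; r+kr^{s}\binom{r}{2}\;\equiv\; r\pmod{r^{s+1}},
\]
where the last congruence uses that $r$ is odd, so $\binom{r}{2}=r(r-1)/2$ is divisible by $r$. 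Hence $\nu_r\bigl(\sum_{i=0}^{r-1}a^i\bigr)=1$, and combining with the factorization gives $\nu_r(a^r-1)=s+1$. An induction on $k$ then yields $\nu_r(a^{r^k}-1)=s+k$. For general $n=r^k m$ with $\gcd(m,r)=1$, factor $a^n-1=(a^{r^k}-1)\cdot\sum_{i=0}^{m-1}(a^{r^k})^i$; modulo $r$ the second factor equals $m\not\equiv 0$, so it contributes no $r$-power, and we conclude $\nu_r(a^n-1)=\nu_r(a-1)+\nu_r(n)$.

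For part (ii) with $r=2$, the argument above fails exactly because $(r-1)/2$ is not an integer, so $\binom{r}{2}$ is no longer divisible by $r$. I would bypass this by splitting on the parity of $n$. If $n$ is odd, then $\sum_{i=0}^{n-1}a^i$ is a sum of $n$ odd summands (each $a^i$ is odd since $a$ is odd), hence odd, so $\nu_2(a^n-1)=\nu_2(a-1)$. If $n$ is even, rewrite $a^n-1=(a^2)^{n/2}-1$ and apply the argument of (i) to the base $b=a^2$: one checks $b\equiv 1\pmod 8$ since $a$ is odd, so in particular $\nu_2(b-1)\ge 2$, and the odd-prime-style computation goes through at $r=2$ because the extra factor of $2$ from $\nu_2(b-1)\ge 2$ absorbs the failure of $(r-1)/2$ to be an integer. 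This yields $\nu_2((a^2)^{n/2}-1)=\nu_2(a^2-1)+\nu_2(n/2)=\nu_2(a^2-1)+\nu_2(n)-1$.

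The ``in particular'' statement is then immediate: when $a\equiv 1\pmod 4$, we have $\nu_2(a+1)=1$, so $\nu_2(a^2-1)=\nu_2(a-1)+\nu_2(a+1)=\nu_2(a-1)+1$; substituting into the even-$n$ formula gives $\nu_2(a^n-1)=\nu_2(a-1)+\nu_2(n)$, which also coincides with the odd-$n$ formula $\nu_2(a-1)=\nu_2(a-1)+0$.

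The main obstacle is the $r=2$ case, because the clean identity $\sum_{i=0}^{r-1}a^i\equiv r\pmod{r^{s+1}}$ depends on $r$ being odd. My strategy for handling this is to separate the parity of $n$, exploit the automatic improvement $a^2\equiv 1\pmod 8$ for odd $a$, and only then invoke the ``odd prime'' style argument on the base $a^2$, where the extra factor of $2$ in $\nu_2(a^2-1)$ compensates for the defect. Everything else is a routine induction that the paper's phrase ``easily follows by induction'' already signals.
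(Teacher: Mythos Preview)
Your argument is correct and is precisely the standard inductive proof the paper alludes to when it writes ``Its proof easily follows by induction and so we omit the details.'' Since the paper provides no further argument, there is nothing to compare beyond noting that your base step, induction on the exponent of $r$, and parity split for $r=2$ constitute exactly the omitted details.
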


From the previous lemma, we have the following result.

\begin{proposition}\label{prop:estimate-phi}
Let $a$ be a positive integer not divisible by the primes in the set $\mathcal C=\{r_1, \ldots, r_u\}$. Then there exist constants $L(\mathcal C, a), U(\mathcal C, a)>0$ (only depending on $a$ and the primes $r_i$) such that, for any positive integer $b$ whose set of prime factors is $\mathcal C$, the following holds:
$$L(\mathcal C, a)\le \frac{\varphi(b)}{\ord(a, b)}\le U(\mathcal C, a).$$
\end{proposition}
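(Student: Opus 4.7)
The plan is to reduce to a prime-by-prime analysis via the Chinese Remainder Theorem. Writing $b = \prod_{i=1}^u r_i^{e_i}$ with all $e_i\ge 1$, one has $\ord(a, b) = \mathrm{lcm}_{i=1}^u \ord(a, r_i^{e_i})$ while $\varphi(b) = \prod_{i=1}^u r_i^{e_i - 1}(r_i - 1)$. The first step is to apply Lemma~\ref{lem:LTE} to pin down each local order: for each $i$, there exist integers $d_i := \ord(a, r_i)$ (coprime to $r_i$) and $s_i\ge 1$, depending only on $a$ and $r_i$ but not on $e_i$, such that
$$\ord(a, r_i^{e_i}) = d_i \cdot r_i^{\max(0,\, e_i - s_i)},$$
with a minor adjustment in the case $r_i = 2$, $a \equiv 3 \pmod 4$, handled separately.

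The second step is to compute $\ord(a, b)$ prime by prime. Since the $r_i$-part of an lcm is the maximum of the $r_i$-parts, for each fixed $i$ the contribution from $\ord(a, r_i^{e_i})$ is $r_i^{\max(0,\, e_i - s_i)}$ (using $\gcd(d_i, r_i) = 1$), while for $j \ne i$ the contribution coming from $\ord(a, r_j^{e_j})$ is $r_i^{\nu_{r_i}(d_j)}$, which is independent of $e_j$. Setting $T_i := \max_{j \ne i}\nu_{r_i}(d_j)$ and letting $D'$ denote the lcm of the $\mathcal{C}$-free parts of the $d_j$'s (both depending only on $a$ and $\mathcal{C}$), we obtain
$$\ord(a, b) \;=\; D' \cdot \prod_{i=1}^u r_i^{\max(\max(0,\, e_i - s_i),\, T_i)}.$$

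The final step is to combine the two formulas into
$$\frac{\varphi(b)}{\ord(a, b)} \;=\; \frac{1}{D'}\prod_{i=1}^u (r_i - 1)\, r_i^{\,e_i - 1 - \max(\max(0,\, e_i - s_i),\, T_i)},$$
and verify by a short case split on whether $e_i \le s_i$, $s_i < e_i \le s_i + T_i$, or $e_i > s_i + T_i$ that each exponent lies in the bounded interval $[-T_i,\, s_i - 1]$. Taking $L(\mathcal{C}, a)$ and $U(\mathcal{C}, a)$ to be the resulting positive infimum and supremum of the right-hand side over $(e_1, \ldots, e_u) \in \mathbb{Z}_{\ge 1}^u$ finishes the proof.

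The main obstacle I anticipate is the uniform treatment of the prime $r_i = 2$. Lemma~\ref{lem:LTE}(ii) branches on the parity of $n$ and the residue of $a$ modulo $4$, and the clean formula $\ord(a, 2^e) = d \cdot 2^{\max(0,\, e - s)}$ fails to hold at $e = 1$ when $a \equiv 3 \pmod 4$. Once one checks by direct inspection that this low-exponent discrepancy contributes only a bounded multiplicative factor (independent of $b$), the remainder of the prime-by-prime bookkeeping goes through unchanged and yields the uniform bounds.
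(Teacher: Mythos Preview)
Your argument is correct and, like the paper, rests on the Lifting the Exponent Lemma; the organisation, however, is different. The paper does not compute each $\ord(a,r_i^{e_i})$ separately and then take an $\mathrm{lcm}$. Instead it sets $R=\prod_i r_i$ and $S=\ord(a,R)$ once and for all, so that every $r_i$ divides $a^S-1$; then LTE applied to $a^S$ gives directly
\[
\ord(a,b)=S\cdot\prod_{i=1}^u r_i^{\max(E_i-e_i,0)},\qquad e_i=\nu_{r_i}(a^S-1),
\]
and the ratio $\varphi(b)/\ord(a,b)$ becomes $\frac{\varphi(R)}{S}\prod_i r_i^{\min(E_i,e_i)-1}$, visibly bounded above and below. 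By absorbing all the local orders $d_j$ into the single constant $S$, the paper sidesteps the cross-term bookkeeping you carry in $T_i$ and $D'$, yielding a shorter proof with explicit constants $L=\varphi(R)/S$ and $U=\varphi(R)(a^S-1)/(RS)$. Your prime-by-prime decomposition, on the other hand, gives a more refined picture of how each exponent $e_i$ contributes individually, at the cost of tracking the interactions $\nu_{r_i}(d_j)$. The treatment of $r_i=2$ with $a\equiv 3\pmod 4$ is handled in both by the same trick: the paper simply replaces $a$ by $A=a^2$ and uses $\ord(A,b)\le \ord(a,b)\le 2\,\ord(A,b)$, which is exactly the ``bounded multiplicative factor'' you anticipated.
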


\begin{proof}
Set $R=\prod_{i=1}^ur_i$, $S=\ord(a, R)$ and $e_i=\nu_{r_i}(a^S-1)$, where $1\le i\le u$. Let $b=r_1^{E_1}\cdots r_u^{E_u}$ be a positive integer with $E_i\ge 1$ for any $1\le i\le u$. 
If $b$ is odd or $a\equiv 1\pmod 4$, Lemma~\ref{lem:LTE} entails that
$$\ord(a, b)=S\cdot \prod_{1\le i\le u}r_i^{m_i},$$
where $m_i=\max\{E_i-e_i, 0\}$. Therefore,

$$\frac{\varphi(b)}{\ord(a, b)}=\frac{\varphi(R)}{S}\cdot \prod_{1\le i\le u}r_i^{m_i^*-1},$$
where $m_i^*=E_i-m_i=\min\{E_i, e_i\}$. From the definition of $m_i^*$, we have that $1\le m_i^*\le e_i$ and so 
$$ 1\le \prod_{1\le i\le u}r_i^{m_i^*-1}\le \prod_{1\le i\le u}r_i^{e_i-1}\le \frac{a^S-1}{R}.$$
The result follows with $L(\mathcal C, a)=\frac{\varphi(R)}{S}$ and $U(\mathcal C, a)=\frac{\varphi(R)(a^S-1)}{RS}$. For $b$ even and $a\equiv 3\pmod 4$, we observe that $A=a^2$ satisfies $A\equiv 1\pmod 4$ and
$$\ord(A, b)\le \ord(a, b)\le 2\cdot \ord(A, b).$$
In this case, the result follows with $L(\mathcal C, a)=\frac{L(\mathcal C, A)}{2}$ and $U(\mathcal C, a)=U(\mathcal C, A)$.
\end{proof}

The following result provides sharp estimates on the growth on the functions $M_{f, g}(n), m_{f, g}(n)$ and  $N_{f, g}(n)$ in the case $g$ a monomial.

\begin{theorem}\label{thm:monomial-growth}
Let $f\in \F_q[x]$ be an irreducible polynomial of degree $k$, not of the form $ax$ such that $e=\ord(f)$. Let $D>1$ be a positive integer such that $\gcd(D, p)=1$ and write $D=d_1d_2$, where $\gcd(d_1, e)=1$ and each prime factor of $d_2$ divides $e$. Then, for $g(x)=x^D$, the following hold:

\begin{enumerate}[(i)]
\item $M_{f, g}(n)\approx D^n$;
\item $m_{f, g}(n)\approx d_2^n$;
\item $N_{f, g}(n)\approx n^t$, where $t$ is the number of distinct prime factors of $d_1$.
\end{enumerate}
\end{theorem}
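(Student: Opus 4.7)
The plan is to start from the explicit formulas in Corollary~\ref{cor:monomial}, which express $M_{f,g}(n)$, $m_{f,g}(n)$, and $N_{f,g}(n)$ purely in terms of orders of $q$ modulo integers of the form $d_2^n e$, $D^n e$, and $M d_2^n e$ with $M\mid d_1^n$. A key observation is that all of these integers have their prime factorizations supported in the fixed finite set $\mathcal{C}$ consisting of the primes dividing $De$. This is exactly the setting of Proposition~\ref{prop:estimate-phi}, which will let us replace $\ord(q, N)$ by $\varphi(N)$ up to uniform multiplicative constants whenever $N$ has prime support in $\mathcal{C}$.

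For items (i) and (ii), once orders are replaced by Euler totients, the estimates reduce to $\varphi(D^n e) \asymp D^n$ and $\varphi(d_2^n e) \asymp d_2^n$. The upper bound $\varphi(m)\le m$ gives one inequality immediately (absorbing the constant $e$), and the matching lower bound follows from $\varphi(m) = m \prod_{p\mid m}(1-1/p)$, noting that the product ranges only over primes in the finite set $\mathcal{C}$ and is therefore uniformly bounded away from zero. Combining this with Proposition~\ref{prop:estimate-phi} gives $M_{f,g}(n)\asymp D^n$ and $m_{f,g}(n)\asymp d_2^n$.

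For item (iii), the key structural point is that by the definitions of $d_1$ and $d_2$, the prime divisors of $d_1$ are disjoint from those of $d_2 e$. Consequently, for any divisor $M$ of $d_1^n$, one has $\gcd(M, d_2^n e)=1$, hence the multiplicative factorization
\[
\varphi(M d_2^n e) = \varphi(M)\,\varphi(d_2^n e).
\]
Applying Proposition~\ref{prop:estimate-phi} to the integers $M d_2^n e$ (all supported in $\mathcal{C}$), each summand satisfies
\[
\frac{k d_2^n \varphi(M)}{\ord(q, M d_2^n e)} \asymp \frac{k d_2^n \varphi(M)}{\varphi(M)\varphi(d_2^n e)} = \frac{k d_2^n}{\varphi(d_2^n e)} \asymp k,
\]
uniformly in $M$ and $n$. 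Hence $N_{f,g}(n)$ is proportional to the number of divisors of $d_1^n$. Writing $d_1 = p_1^{a_1}\cdots p_t^{a_t}$ gives $d(d_1^n) = \prod_{i=1}^t (na_i+1) \asymp n^t$, yielding the claim.

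The main technical point to watch is uniformity: one must ensure that the constants hidden in all $\approx$ relations are independent of both $n$ and, crucially in (iii), of the divisor $M$. This is precisely what Proposition~\ref{prop:estimate-phi} provides, so no essential obstacle is expected; the only care needed is to check that the set $\mathcal{C}$ of admissible primes can be fixed once and for all at the outset and that the multiplicativity $\varphi(Md_2^n e)=\varphi(M)\varphi(d_2^n e)$ is correctly invoked to decouple the $M$-dependence from the $n$-dependence inside the sum.
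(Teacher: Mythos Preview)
Your approach is correct and essentially identical to the paper's: both feed Corollary~\ref{cor:monomial} into Proposition~\ref{prop:estimate-phi} to replace $\ord(q,\cdot)$ by $\varphi(\cdot)$ up to uniform constants, and then reduce to $\varphi(N)\asymp N$ for $N$ with bounded prime support (items (i), (ii)) and to the divisor count of $d_1^n$ (item (iii)). One small correction to your last paragraph: Proposition~\ref{prop:estimate-phi} as written applies only when the prime support of $b$ \emph{equals} $\mathcal{C}$, not merely lies inside it, and the support of $Md_2^ne$ genuinely varies with $M$ (for $M=1$ it is just the primes of $e$), so a single $\mathcal{C}$ cannot literally be fixed once and for all; the paper handles this by taking the extremal constants $L,U$ over all non-empty subsets of the relevant prime set, which is the easy patch your argument needs. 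The only other difference is cosmetic: for (iii) the paper applies Proposition~\ref{prop:estimate-phi} to $Md_2^n$ and controls the extra factor $e$ via $\ord(q,Md_2^n)\mid\ord(q,Md_2^ne)\le\ord(q,e)\cdot\ord(q,Md_2^n)$, whereas you absorb $e$ into the modulus directly, which is slightly shorter.
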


\begin{proof}
From Proposition~\ref{prop:estimate-phi} and item (ii) of Corollary~\ref{cor:monomial}, we have that $M_{f, g}(n)\approx \varphi(D^n)$. Since
$$\frac{\varphi(D^n)}{D^n}=\frac{\varphi(D)}{D},$$
we have that $\varphi(D^n)\approx D^n$ and so $M_{f, g}(n)\approx D^n$. Similarly we obtain $m_{f, g}(n)\approx d_2^n$. It remains to prove item (iii). Let $C_1$ and $C_2$ be the set of distinct prime factors of $d_1$ and $d_2$, respectively, and let $\mathcal S$ be the set of all non-empty subsets of $C_1\cup C_2$. For each $\mathcal C\in \mathcal S$, let $L(\mathcal C, q)$ and $U(\mathcal C, q)$ be as in Proposition~\ref{prop:estimate-phi} and set
$L=\min\limits_{\mathcal C\in \mathcal S}L(\mathcal C, q)$ and $U=\max\limits_{\mathcal C\in \mathcal S}U(\mathcal C, q)$. From Proposition~\ref{prop:estimate-phi}, we have that
$$L\le \frac{\varphi(Md_2^n)}{\ord(q, Md_2^n)}\le U,$$
where $n\ge 1$ and $M$ is any divisor of $d_1^n$.
Since $\ord(q, Md_2^ne)\le \ord(q,e)\cdot \ord(q, Md_2^n)$, there exist $L', U'>0$ such that, for any $n\ge 1$ and any divisor $M$ of $d_1^n$, the following holds:
$$L'\le \frac{k\cdot \varphi(Md_2^n)}{\ord(q, Md_2^ne)}\le U'.$$
Since $M$ and $d_2$ are relatively prime, we have that $$\varphi(Md_2^n)=\varphi(M)\cdot \varphi(d_2^n)=\varphi(M)\cdot d_2^n\cdot \frac{\varphi(d_2)}{d_2}.$$
In conclusion, there exist constants $c_0,c_1>0$ such that
$$c_0\le \frac{kd_2^n\varphi(M)}{\ord(q, {Md_2^ne})}\le c_1,$$
for any $n\ge 0$ and any divisor $M$ of $d_1^n$. Let 
$d_1=r_1^{m_1}\ldots r_t^{m_t}$ be the prime factorization of $d_1$. In particular, $d_1^n$ has 
$$(nm_1+1)\cdots (nm_t+1)\approx n^t$$
distinct divisors. Therefore, from item (v) of Corollary~\ref{cor:monomial}, we have that
$$N_{f, g}(n)=\sum_{M|d_1^n}\frac{kd_2^n\varphi(M)}{\ord(q, {Md_2^ne})}\approx n^t.$$
\end{proof}

\begin{remark}
Let $f$, $g$ and $d_2$ be as in the previous theorem and let $\alpha\in\overline{\F}_q$ be any root of $f$. From the previous theorem, we have that $m_{f, g}(n)\to \infty$ unless $d_2=1$, i.e., $D$ is relatively prime with the order $e$ of $f(x)$. It is direct to verify that the latter holds if and only if $\alpha$ is not $g$-periodic, as predicted by Theorem~\ref{thm:main2}.
\end{remark}

The following corollary is an immediate application of Theorem~\ref{thm:monomial-growth} and provides the proof of item (i) in Theorem~\ref{thm:main3}.

\begin{corollary}
Let $f\in \F_q[x]$ be a polynomial of degree at least one that is not of the form $ax^k$. Write
$$f(x)=ax^lf_1(x)\cdots f_m(x),$$
where $l\ge 0$, $m\ge 1$ and each $f_i(x)\ne x$ is irreducible over $\F_q$ with order $e_i=\ord(f_i)$. 
Let $\mathcal C$ be the set of distinct prime divisors of the numbers $e_i$. If $D>1$ is any positive integer with $t$ distinct prime factors, none of them being an element of $\mathcal C$ or the prime $p$, then for $g(x)=x^D$ the following holds:
$$M_{f, g}(n)\approx D^n\quad\text{and}\quad N_{f, g}(n)\approx n^t.$$
\end{corollary}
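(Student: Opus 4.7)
The plan is to reduce everything to the irreducible factorization $f(x)=ax^lf_1(x)\cdots f_m(x)$ via Lemma~\ref{lem:reduction-1} and then feed each irreducible factor into Theorem~\ref{thm:monomial-growth}. Specifically, Lemma~\ref{lem:reduction-1}(c) gives $M_{f,g}(n)=\max_i M_{f_i,g}(n)$ where the maximum ranges over the distinct irreducible factors of $f$, and Lemma~\ref{lem:reduction-1}(f) gives $N_{f,g}(n)=\sum_i N_{f_i,g}(n)$. Among these irreducible factors, besides $f_1,\dots,f_m$, the monomial $x$ may also appear if $l\geq 1$; but then $P_n[x,g](x)=g^{(n)}(x)=x^{D^n}$, which contributes $M_{x,g}(n)=1$ and $N_{x,g}(n)=1$, hence only a bounded additive/multiplicative correction.

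Next, I would verify that Theorem~\ref{thm:monomial-growth} applies to each pair $(f_i,g)$ with $g(x)=x^D$. By hypothesis, $\gcd(D,p)=1$, and each prime factor of $D$ lies outside $\mathcal C$, so $\gcd(D,e_i)=1$ for every $i=1,\dots,m$. In the notation of Theorem~\ref{thm:monomial-growth} written for the pair $(f_i,g)$, this means the factorization $D=d_1d_2$ has $d_2=1$ and $d_1=D$; in particular the number of distinct prime factors of $d_1$ is exactly $t$. Theorem~\ref{thm:monomial-growth}(i) then gives $M_{f_i,g}(n)\approx D^n$ and Theorem~\ref{thm:monomial-growth}(iii) gives $N_{f_i,g}(n)\approx n^t$, with implicit constants depending on $q$, $f_i$ and $D$.

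To conclude, I would combine these estimates. Since $m$ is fixed (independent of $n$), the maximum of the finitely many quantities $M_{f_i,g}(n)\approx D^n$ (together with the constant $M_{x,g}(n)=1$, which is dominated because $D>1$) is itself $\approx D^n$, yielding $M_{f,g}(n)\approx D^n$. Similarly, the sum
\[
N_{f,g}(n)=\sum_{i=1}^{m}N_{f_i,g}(n)+[l\ge 1]
\]
is a bounded number of terms each of order $n^t$ plus at most $1$, and since $t\ge 1$ (because $D>1$ forces at least one prime factor), the lower order constant is absorbed, giving $N_{f,g}(n)\approx n^t$.

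There is no real obstacle here: the only mildly subtle point is the bookkeeping check that the hypothesis ``no prime factor of $D$ lies in $\mathcal C$'' translates uniformly across all $i$ into $d_2=1$ in Butler's decomposition, so that Theorem~\ref{thm:monomial-growth}(iii) produces the same exponent $t$ for every $f_i$, making the sum over $i$ again of order $n^t$ rather than a larger power.
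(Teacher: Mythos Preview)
Your proposal is correct and is precisely the argument the paper has in mind: the corollary is stated as ``an immediate application of Theorem~\ref{thm:monomial-growth}'', and your write-up simply spells out that application via Lemma~\ref{lem:reduction-1}, handling the possible factor $x$ and checking that the hypothesis on the prime factors of $D$ forces $d_2=1$ (hence $d_1=D$ with $t$ prime factors) uniformly for every $f_i$. The only cosmetic point is that the $f_i$ in the statement are not assumed pairwise distinct, so in the sum for $N_{f,g}(n)$ you should strictly range over the distinct irreducible factors as in Lemma~\ref{lem:reduction-1}; this does not affect the $\approx n^t$ conclusion.
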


\subsection{The linearized case}
Here we consider $g(x)=\sum_{i=0}^ma_ix^{q^i}$ a $q$-linearized polynomial, where $g'(x)$ is not the zero polynomial and $f\in \F_q[x]$ is an irreducible polynomial. As pointed out in~\cite{R18}, we have an analogue of Theorem~\ref{thm:butler} for $q$-linearized polynomials with suitable changes. First, in contrast to the order of polynomials, we introduce the $\F_q$-order. 

\begin{definition}
Let $\alpha\in \overline{\F}_q$ and $g\in \F_q[x]$.

\begin{enumerate}[(a)]
\item if $g(x)=\sum_{i=0}^ma_ix^i$, the polynomial  $L_g(x)=\sum_{i=0}^ma_ix^{q^i}$ is the linearized $q$-associate of $g$;
\item the $\F_q$-order of $\alpha$ is the monic polynomial $h\in \F_q[x]$ of least degree such that its $q$-associate $L_h$ vanishes at $\alpha$, i.e., $L_h(\alpha)=0$. We write $h(x)=m_{\alpha, q}(x)$.
\end{enumerate}
\end{definition}

The following lemma compiles some basic properties of the $q$-associates and the $\F_q$-order. Its proof follows from the results in Subsection~2.1 of~\cite{R18} and so we omit the details.

\begin{lemma}\label{lem:properties-q}
The following hold:
\begin{enumerate}[(i)]
\item the $\F_q$-order of an element $\alpha$ is well defined, is not divisible by $x$ and coincides with the $\F_q$-order of any conjugate $\alpha^{q^i}$ of $\alpha$;
\item for polynomials $g_1, g_2\in \F_q[x]$, we have that
$$L_{g_1}(L_{g_2}(x))=L_{g_1g_2}(x).$$
\end{enumerate}
\end{lemma}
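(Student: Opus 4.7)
The plan is to prove (ii) first, since the composition formula for $q$-associates underlies the structural part of (i). Writing $g_1(x)=\sum_i a_ix^i$ and $g_2(x)=\sum_j b_jx^j$ with $a_i,b_j\in\F_q$, I would expand
\[
L_{g_1}(L_{g_2}(x))=\sum_i a_i\,L_{g_2}(x)^{q^i},
\]
then push the $q^i$-th power inside the sum defining $L_{g_2}$ using that Frobenius is additive and that $b_j^{q^i}=b_j$ because $b_j\in\F_q$. This gives $\sum_{i,j}a_ib_j\,x^{q^{i+j}}$; collecting by the value of $i+j$ produces precisely the coefficients of the product $g_1g_2$, yielding $L_{g_1g_2}(x)$. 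The computation is routine and purely formal.

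For (i), I would first show that the set $I_\alpha=\{h\in\F_q[x]:L_h(\alpha)=0\}$ is a nonzero ideal of $\F_q[x]$. It is nonempty and nonzero because $\alpha\in\overline{\F}_q$ lies in some $\F_{q^N}$, whence $L_{x^N-x}(\alpha)=\alpha^{q^N}-\alpha=0$. Additivity $L_{h_1+h_2}=L_{h_1}+L_{h_2}$ is immediate from the definition, and closure under multiplication by arbitrary $u\in\F_q[x]$ follows from part (ii): $L_{uh}(\alpha)=L_u(L_h(\alpha))=L_u(0)=0$. Since $\F_q[x]$ is a PID, $I_\alpha$ admits a unique monic generator, which is by definition $m_{\alpha,q}(x)$; well-definedness follows.

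To rule out $x\mid m_{\alpha,q}$, I would argue by minimality: if $m_{\alpha,q}(x)=x\cdot h(x)$ with $h\in\F_q[x]$, then by (ii),
\[
0=L_{m_{\alpha,q}}(\alpha)=L_x(L_h(\alpha))=L_h(\alpha)^q,
\]
so $L_h(\alpha)=0$ and $h\in I_\alpha$ has strictly smaller degree than $m_{\alpha,q}$, a contradiction. For the Frobenius invariance, I would use that the coefficients of $L_h$ lie in $\F_q$ and only $q$-power exponents appear, so that $L_h(\alpha^{q^i})=L_h(\alpha)^{q^i}$. Consequently $\alpha$ and $\alpha^{q^i}$ share the same annihilator ideal, and hence the same $\F_q$-order.

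There is no serious obstacle here: the single conceptual point is the composition identity (ii), which turns $\F_q[x]$-multiplication into operator-composition on linearized polynomials and is exactly what is needed to promote $I_\alpha$ from an additive subgroup to an ideal. Once (ii) is in hand, each assertion of (i) reduces to a one-line minimality or semilinearity argument.
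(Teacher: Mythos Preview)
Your argument is essentially correct and complete, whereas the paper itself omits the proof entirely, deferring to Subsection~2.1 of \cite{R18}. Your route---proving the composition identity (ii) first and then using it to show that the annihilator $I_\alpha$ is a nonzero ideal in the PID $\F_q[x]$---is exactly the standard one, so there is no substantive difference to discuss.

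One slip to fix: the witness you give for $I_\alpha\ne(0)$ is wrong as written. If $g(x)=x^N-x$, then by the definition $L_g(x)=x^{q^N}-x^{q}$, so $L_{x^N-x}(\alpha)=\alpha^{q^N}-\alpha^{q}$, which need not vanish. You want $g(x)=x^N-1$, giving $L_{x^N-1}(x)=x^{q^N}-x$ and hence $L_{x^N-1}(\alpha)=\alpha^{q^N}-\alpha=0$ for $\alpha\in\F_{q^N}$. With this correction, everything else goes through verbatim: additivity plus (ii) makes $I_\alpha$ an ideal, the monic generator is $m_{\alpha,q}$, the $x\nmid m_{\alpha,q}$ step is a clean minimality contradiction, and the conjugate-invariance follows from $L_h(\alpha^{q^i})=L_h(\alpha)^{q^i}$.
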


We introduce the analogue of the Euler's Totient function for the polynomial ring $\F_q[x]$.
\begin{definition}
The \emph{Euler's Totient function} for polynomials over $\F_q$ is $$\Phi_q(f)=\left |\left(\frac{\F_q[x]}{\langle f\rangle}\right)^{*}\right |,$$ where $\langle f\rangle$ is the ideal generated by $f$ in $\F_q[x]$.
\end{definition}
According to Theorem~4 of~\cite{R18}, we have the following result.
\begin{theorem}\label{thm:main-reis}
Let $f\in \F_q[x]$ be an irreducible polynomial of degree $k$ such that any of its roots has $\F_q$-order $h$. Let $g\in \F_q[x]$ be a monic polynomial such that $\gcd(g(x), x)=1$ and write $g=g_1g_2$, where $\gcd(g_1, h)=1$ and each irreducible factor of $g_2$ divides $h$. If $L_g$ denotes the $q$-associate of $g$ and $\deg(g_2)=m$, then
for each monic divisor $G$ of $g_1$, $f(L_g(x))$ has exactly $$\frac{kq^m\Phi_q(G)}{\ou(x, Gg_2h)},$$ irreducible factors of degree $\ou(x, Gg_2h)$ with roots of $\F_q$-order $Gg_2h$. In addition, this describes all the irreducible factors of $f(L_g(x))$ over $\F_q$.
\end{theorem}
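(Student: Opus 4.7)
The plan is to combine the spin-factorization machinery of Lemma~\ref{lem:spin1} with an analysis of the $\F_q$-orders of elements in the preimage $L_g^{-1}(\alpha) = \{\beta \in \overline{\F}_q : L_g(\beta) = \alpha\}$, where $\alpha$ is a fixed root of $f$. For such a root $\beta$, let $\mathrm{ord}(\beta)$ denote its $\F_q$-order. By Corollary~\ref{cor:spin}, it is enough to identify, for each value $H$ that $\mathrm{ord}(\beta)$ can take, the number of such $\beta$ and their degree over $\F_{q^k}$; each contributes an irreducible factor of $f(L_g(x))$ over $\F_q$ of degree $k\cdot \deg_{\F_{q^k}}(\beta)$. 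The first task is to characterize the possible values of $H$: from Lemma~\ref{lem:properties-q} and the relation $\mathrm{ord}(L_g(\beta)) = \mathrm{ord}(\beta)/\gcd(\mathrm{ord}(\beta), g)$, the equation $L_g(\beta) = \alpha$ forces $\mathrm{ord}(\beta)/\gcd(\mathrm{ord}(\beta), g) = h$. An irreducible-factor-level comparison, using the hypotheses $\gcd(g_1, h)=1$ and ``each irreducible factor of $g_2$ divides $h$'', then shows that $\mathrm{ord}(\beta) = Gg_2h$ for a unique monic divisor $G$ of $g_1$.

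The second task is to count, for each monic $G \mid g_1$, the number $N(G)$ of $\beta \in L_g^{-1}(\alpha)$ with $\mathrm{ord}(\beta) = Gg_2h$. Write $V_H = \{\beta \in \overline{\F}_q : \mathrm{ord}(\beta) \mid H\}$, an $\F_q$-vector space of dimension $\deg H$. A direct prime-power computation gives $\gcd(Gg_2h, g_1g_2) = Gg_2$, so the restriction $L_g|_{V_{Gg_2h}}$ has kernel $V_{Gg_2}$ and image contained in $V_h$; by a dimension count, this image equals $V_h$. Since $\alpha \in V_h$, the set $L_g^{-1}(\alpha) \cap V_{Gg_2h}$ is a coset of $V_{Gg_2}$ of size $q^{\deg G + m}$. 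Summing over $G' \mid G$ yields
\begin{equation*}
\sum_{G' \mid G} N(G') \;=\; q^m \cdot q^{\deg G} \;=\; q^m \sum_{G' \mid G} \Phi_q(G'),
\end{equation*}
where the last equality uses the polynomial identity $q^{\deg G} = \sum_{G' \mid G} \Phi_q(G')$. M\"obius inversion in $\F_q[x]$ now gives $N(G) = q^m \Phi_q(G)$.

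For the degrees and final assembly: since $\F_q$-orders are never divisible by $x$ (Lemma~\ref{lem:properties-q}) and $\gcd(g,x)=1$, we have $x \nmid Gg_2h$, so $\deg_{\F_q}(\beta) = \ou(x, Gg_2h)$ for any $\beta$ of $\F_q$-order $Gg_2h$. The assumptions $\deg f = k$ and $\mathrm{ord}(\alpha)=h$ give $\ou(x, h) = k$, and $h \mid Gg_2h$ forces $k \mid \ou(x, Gg_2h)$; hence $\deg_{\F_{q^k}}(\beta) = \ou(x, Gg_2h)/k$, which (via Corollary~\ref{cor:spin}) produces an irreducible factor of $f(L_g(x))$ over $\F_q$ of degree exactly $\ou(x, Gg_2h)$. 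Since the preimages $L_g^{-1}(\alpha^{q^i})$ for $i=0,\dots,k-1$ are pairwise disjoint and permuted by Frobenius, the $k\cdot q^m \Phi_q(G)$ total roots of $\F_q$-order $Gg_2h$ obtained across all $k$ conjugates of $\alpha$ partition into Frobenius orbits of common size $\ou(x, Gg_2h)$, producing exactly $\frac{kq^m \Phi_q(G)}{\ou(x, Gg_2h)}$ irreducible factors of $f(L_g(x))$ over $\F_q$ as claimed. The main obstacle is Step~2: identifying the kernel and image of $L_g|_{V_H}$ by a careful prime-power analysis of the relevant $\gcd$'s, where the decomposition $g=g_1g_2$ is essential for forcing the $\F_q$-order of $\beta$ to land precisely on $Gg_2h$ and not on a proper divisor.
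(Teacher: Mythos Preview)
The paper does not supply its own proof of this statement; it is quoted as Theorem~4 of~\cite{R18} and used as a black box. Your argument is correct and follows what is presumably the natural route in~\cite{R18}: reduce via Lemma~\ref{lem:spin1} and Corollary~\ref{cor:spin} to counting preimages $\beta\in L_g^{-1}(\alpha)$ by $\F_q$-order, use the identity $\mathrm{ord}(L_g(\beta))=\mathrm{ord}(\beta)/\gcd(\mathrm{ord}(\beta),g)$ (a direct consequence of Lemma~\ref{lem:properties-q}(ii)) together with a prime-by-prime analysis to pin down the possible orders as $Gg_2h$ with $G\mid g_1$, identify the kernel of $L_g$ on $V_{Gg_2h}$ as $V_{Gg_2}$ to obtain the coset count $q^{\deg G+m}$, and finish by M\"obius inversion and the degree formula $\deg_{\F_q}(\beta)=\ou(x,Gg_2h)$. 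One small point worth making explicit for completeness: the hypothesis $\gcd(g,x)=1$ forces $L_g'(x)$ to be a nonzero constant, so $f(L_g(x))$ is separable and the root count translates directly into a count of distinct irreducible factors.
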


\begin{remark}
We observe that, for any $a\in \F_q^*$ and $g\in \F_q[x]$, the following holds $$aL_g(x)=L_g(ax)=L_{ag}(x).$$ Therefore, the $\F_q$-order of $\alpha\in \overline{\F}_q$ and $a\alpha$ coincide whenever $a\in \F_q^*$. In particular, Theorem~\ref{thm:main-reis} holds without the assumption that $g$ is monic. This fact is frequently used.
\end{remark}

From item (ii) of Lemma~\ref{lem:properties-q}, we directly obtain a formula for iterates of $q$-linearized polynomials:

\begin{equation}\label{eq:iterates-q}
L_g^{(n)}(x)=L_{g^n}(x), \; n\ge 0.
\end{equation}

We observe that the condition $\gcd(g(x), x)=1$ in Theorem~\ref{thm:main-reis} is compatible with our initial assumption; in fact, $\gcd(g(x), x)=1$ if and only if the derivative of $L_g(x)$ is not the zero polynomial. From Theorem~\ref{thm:main-reis} and Eq.\eqref{eq:iterates-q}, we easily obtain the following analogue of Corollary~\ref{cor:monomial}.

\begin{corollary}\label{cor:linearized}
Let $f\in \F_q[x]$ be an irreducible polynomial of degree $k$ such that any of its roots has $\F_q$-order $h$. Let $g\in \F_q[x]$ be a polynomial of degree $D\ge 1$ such that $\gcd(g(x), x)=1$ and write $g=g_1g_2$, where $\gcd(g_1, h)=1$ and each irreducible factor of $g_2$ divides $h$. If $L_g$ denotes the $q$-associate of $g$ and $\deg(g_2)=m$, then for any $n\ge 0$, the following hold:
\begin{enumerate}[(i)]
\item $E_{f, L_g}(n)=e_{f, L_g}(n)=1$;
\item $\Delta_{f, L_g}(n)=k\cdot q^{Dn}$;
\item $M_{f, L_g}(n)=\ou(x, g^nh)$;
\item $m_{f, L_g}(n)=\ou(x, g_2^nh)$;
\item $N_{f, L_g}(n)=\sum_{G|g_1^n}\frac{kq^{mn}\Phi_q(G)}{\ou(x, {Gg_2^nh})},$ where $G$ is monic and polynomial division is over $\F_q$.
\end{enumerate}
\end{corollary}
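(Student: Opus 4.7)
The plan is to derive all five items from a single application of Theorem~\ref{thm:main-reis} to the polynomial $g^n$ (in place of $g$), exploiting the identity $L_g^{(n)}(x) = L_{g^n}(x)$ from Eq.~\eqref{eq:iterates-q}, so that $P_n[f, L_g](x) = f(L_{g^n}(x))$. First I would verify that $g^n$ satisfies the hypotheses of Theorem~\ref{thm:main-reis}: since $\gcd(g, x) = 1$, we get $\gcd(g^n, x) = 1$; writing $g^n = g_1^n \cdot g_2^n$, the coprimality $\gcd(g_1^n, h) = 1$ follows from $\gcd(g_1, h) = 1$, and every irreducible factor of $g_2^n$ divides $h$ since this holds for $g_2$. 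Note also that $\deg(g_2^n) = mn$.

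Theorem~\ref{thm:main-reis} then yields a complete description of the irreducible factorization of $f(L_{g^n}(x))$: for each monic divisor $G$ of $g_1^n$, there are exactly $\tfrac{k q^{mn} \Phi_q(G)}{\ou(x, G g_2^n h)}$ irreducible factors of degree $\ou(x, G g_2^n h)$. Summing over all such $G$ directly yields~(v). For~(iii) and~(iv), I would use the fact that if $G$ divides $G'$ in $\F_q[x]$, then $G g_2^n h$ divides $G' g_2^n h$, and hence $\ou(x, G g_2^n h)$ divides $\ou(x, G' g_2^n h)$; in particular the former is at most the latter. Consequently the maximum degree over divisors of $g_1^n$ is attained at $G = g_1^n$, giving $M_{f, L_g}(n) = \ou(x, g_1^n g_2^n h) = \ou(x, g^n h)$, while the minimum is attained at $G = 1$, yielding $m_{f, L_g}(n) = \ou(x, g_2^n h)$.

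The main technical step, not directly contained in Theorem~\ref{thm:main-reis}, is~(i), which requires a separate squarefreeness argument. Here I would observe that the formal derivative of $L_{g^n}(x) = \sum_i c_i x^{q^i}$ equals the constant $c_0 = g(0)^n$, which is nonzero precisely because $\gcd(g, x) = 1$. Hence, for every $\alpha \in \overline{\F}_q$, the polynomial $L_{g^n}(x) - \alpha$ has a nonvanishing derivative and is therefore squarefree. Writing $f = \lambda_f \prod_i (x - \alpha_i)$ with distinct roots $\alpha_i$ and invoking Lemma~\ref{lem:prime}, the polynomials $L_{g^n}(x) - \alpha_i$ are pairwise coprime. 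Thus $f(L_{g^n}(x))$ is a product of pairwise coprime squarefree polynomials and is itself squarefree, proving~(i). Finally~(ii) is immediate, since squarefreeness gives $\Delta_{f, L_g}(n) = \deg(f(L_{g^n}(x))) = k \cdot \deg(L_{g^n}) = k \cdot q^{Dn}$.
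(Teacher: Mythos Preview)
Your proposal is correct and follows precisely the route the paper indicates: the paper simply asserts that the corollary follows from Theorem~\ref{thm:main-reis} and Eq.~\eqref{eq:iterates-q} without further details, and you have filled those in, including the monotonicity of $\ou(x,\cdot)$ under divisibility for (iii)--(iv) and a clean derivative argument for the squarefreeness in~(i).
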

In analogy to the monomial case, the computation of orders $\ou(x, F)$ is required. This is done with the help of the following result.

\begin{lemma}\label{lem:order-mult}
Let $F\in \F_q[x]$ be a non constant polynomial that is not divisible by $x$ and let $\mathrm{rad}(F)$ be the squarefree part of $F$. Then the following hold:
$$\ou(x, F)=\ou(x, \mathrm{rad}(F))\cdot p^r,$$
where $r=\lceil \log_p(\nu(F))\rceil$.
\end{lemma}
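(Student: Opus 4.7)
The plan is to exploit the decomposition of $F$ into its squarefree part together with the Frobenius identity $(A)^{p^a} = \sigma_a(A)^\text{whatever}$… more precisely, the identity $x^{p^a t}-1 = (x^t-1)^{p^a}$ valid in characteristic $p$. Set $s_0 := \ou(x,\mathrm{rad}(F))$ and $r := \lceil \log_p(\nu(F))\rceil$, so that $p^r$ is the smallest power of $p$ that is at least $\nu(F)$. I will establish the upper and lower bounds $\ou(x,F)\le s_0 p^r$ and $\ou(x,F)\ge s_0 p^r$ separately.

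For the upper bound I would argue as follows. By definition $\mathrm{rad}(F)\mid x^{s_0}-1$, hence $\mathrm{rad}(F)^{p^r}\mid (x^{s_0}-1)^{p^r}=x^{s_0 p^r}-1$. Because every irreducible factor of $F$ occurs with multiplicity at most $\nu(F)\le p^r$, the polynomial $F$ divides $\mathrm{rad}(F)^{p^r}$, and therefore $F\mid x^{s_0 p^r}-1$, giving $\ou(x,F)\mid s_0 p^r$.

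For the lower bound, let $s=\ou(x,F)$ and write $s=p^a t$ with $\gcd(t,p)=1$. Then $F\mid x^s-1 = (x^t-1)^{p^a}$. Since $\gcd(t,p)=1$ the polynomial $x^t-1$ is squarefree. Every irreducible factor $P$ of $F$ must therefore divide $x^t-1$, so $\mathrm{rad}(F)\mid x^t-1$ and thus $s_0\mid t$. Moreover, choosing $P$ to be an irreducible factor of $F$ of multiplicity exactly $\nu(F)$, its multiplicity in $(x^t-1)^{p^a}$ is precisely $p^a$ (since it is exactly $1$ in $x^t-1$), which forces $p^a\ge \nu(F)$, i.e.\ $a\ge r$. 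Combining these two divisibilities yields $s=p^a t\ge p^r s_0$.

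The two bounds together give $\ou(x,F)=s_0 p^r$, which is the claim. The only delicate point is the multiplicity argument in the lower bound: one must record that irreducible factors of $x^t-1$ really appear with multiplicity exactly one (so $p^a$ in its $p^a$-th power), which is why the coprimality $\gcd(t,p)=1$ is used in an essential way; without it the bound $p^a\ge \nu(F)$ would not be forced.
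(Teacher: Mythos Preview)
Your proof is correct and complete. The paper itself does not give a proof of this lemma: it simply refers the reader to item~(ii) of Lemma~5 in~\cite{R18}. Your argument is the standard one and is almost certainly what that reference contains as well---splitting $s=\ou(x,F)$ as $p^a t$ with $\gcd(t,p)=1$, using the Frobenius identity $x^{p^a t}-1=(x^t-1)^{p^a}$ and the squarefreeness of $x^t-1$ to extract both the divisibility $s_0\mid t$ and the multiplicity bound $p^a\ge \nu(F)$. One small stylistic remark: in the lower bound you actually obtain the divisibility $s_0 p^r\mid s$ (since $s_0\mid t$ and $p^r\mid p^a$), which together with $s\mid s_0 p^r$ from the upper bound gives equality immediately, without needing the inequality $s\ge s_0 p^r$ as an intermediate step.
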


\begin{proof}
For the proof of this result, see item (ii) of Lemma~5 in~\cite{R18}.
\end{proof}

\begin{proposition}\label{thm:linearized-growth}
Let $f\in \F_q[x]$ be an irreducible polynomial of degree $k$ such that any of its roots has $\F_q$-order $h$. Let $g\in \F_q[x]$ be a polynomial of degree $D\ge 1$ such that $\gcd(g(x), x)=1$ and write $g=g_1g_2$, where $\gcd(g_1, h)=1$ and each irreducible factor of $g_2$ divides $h$. If $L_g$ denotes the $q$-associate of $g$, then $M_{f, L_g}(n)\approx n$. In addition, if $g_2(x)\ne 1$, then 
$$A_{f, L_g}(n)\approx n\quad\text{and}\quad N_{f, L_g}(n)\approx \frac{q^{Dn}}{n}.$$
\end{proposition}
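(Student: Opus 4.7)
The plan is to read off everything from Corollary~\ref{cor:linearized} combined with the multiplicative structure for $\ou(x,F)$ provided by Lemma~\ref{lem:order-mult}. For any polynomial $F\in\F_q[x]$ not divisible by $x$, the latter lemma gives $\ou(x,F)=\ou(x,\mathrm{rad}(F))\cdot p^r$ with $r=\lceil\log_p\nu(F)\rceil$, and since taking radicals kills multiplicities one has $\mathrm{rad}(F^n)=\mathrm{rad}(F)$ for every $n\ge 1$. I would exploit these two facts to compute everything up to a multiplicative constant.

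First I would show that $M_{f,L_g}(n)\approx n$. By item (iii) of Corollary~\ref{cor:linearized}, $M_{f,L_g}(n)=\ou(x,g^nh)$, and since $g^nh$ has the same roots as $gh$, the factor $\ou(x,\mathrm{rad}(g^nh))=\ou(x,\mathrm{rad}(gh))$ is a positive constant $c_0$ independent of $n$. For the remaining factor $p^r$, any nonzero root $\alpha$ of $g$ (which exists because $\deg g\ge 1$ and $\gcd(g,x)=1$) has multiplicity at least $n$ in $g^nh$, so $n\le\nu(g^nh)\le nD+\nu(h)$, and hence $p^r\approx n$. Multiplying gives $M_{f,L_g}(n)\approx n$.

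Now assume $g_2\ne 1$. The same reasoning applied to item (iv) of Corollary~\ref{cor:linearized} yields $m_{f,L_g}(n)=\ou(x,g_2^nh)=\ou(x,\mathrm{rad}(g_2^nh))\cdot p^{r'}$ with $r'=\lceil\log_p\nu(g_2^nh)\rceil$. The hypothesis that every irreducible factor of $g_2$ divides $h$ forces $\mathrm{rad}(g_2^nh)=\mathrm{rad}(h)$, so the first factor is again a positive constant. Since $g_2$ has a nonzero root we again get $n\le\nu(g_2^nh)\le nD+\nu(h)$, so $p^{r'}\approx n$ and therefore $m_{f,L_g}(n)\approx n$.

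Finally I would use the trivial sandwich $m_{f,L_g}(n)\le A_{f,L_g}(n)\le M_{f,L_g}(n)$, valid because $A_{f,L_g}(n)$ is the mean of the degrees of the distinct irreducible factors. Combined with the previous two paragraphs this gives $A_{f,L_g}(n)\approx n$. Since item (ii) of Corollary~\ref{cor:linearized} states $\Delta_{f,L_g}(n)=k\cdot q^{Dn}$ and $N_{f,L_g}(n)=\Delta_{f,L_g}(n)/A_{f,L_g}(n)$ by definition, we obtain $N_{f,L_g}(n)\approx q^{Dn}/n$. The proof is essentially mechanical once Lemma~\ref{lem:order-mult} is in hand; the only real subtlety is the identity $\mathrm{rad}(g_2^nh)=\mathrm{rad}(h)$, which is exactly where the distinction between $g_1$ and $g_2$ and the hypothesis $g_2\ne 1$ intervene. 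Without that clean cancellation one would be forced to estimate $\ou(x,Gg_2^nh)$ uniformly in the divisor $G$ of $g_1^n$ in the sum of item (v), which is where the main obstacle would lie if one attacked $N_{f,L_g}(n)$ directly rather than via $N=\Delta/A$.
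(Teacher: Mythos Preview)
Your argument is correct and follows essentially the same route as the paper: both proofs compute $M_{f,L_g}(n)=\ou(x,g^nh)$ and $m_{f,L_g}(n)=\ou(x,g_2^nh)$ via Lemma~\ref{lem:order-mult}, observing that the radical is constant in $n$ while $\nu(g^nh)$ grows linearly. The only cosmetic difference is the order of the final step: the paper bounds $N$ first via $m\cdot N\le\Delta\le M\cdot N$ and then deduces $A=\Delta/N\approx n$, whereas you sandwich $A$ directly between $m$ and $M$ and then read off $N=\Delta/A$; these are equivalent.
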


\begin{proof}
Combining item (iii) of Corollary~\ref{cor:linearized} and Lemma~\ref{lem:order-mult}, we have that
$$M_{f, L_g}(n)=\ou(x, g^nh)=\ou (x, \mathrm{rad}(gh))\cdot p^{t(n)},\; n\ge 1,$$
where $t(n)=\lceil \log_p(\nu(g^nh))\rceil$. If we set $t_0=\nu(g)$, for $n$ sufficiently large, we have that $\nu(g^nh)=nt_0+R$ for some $R\ge 0$ not depending on $n$. Therefore, 
$$p^{t(n)}\approx p^{\log_p(t_0n)}\approx n.$$
In particular, since $N_{f, L_g}(n)\cdot M_{f, L_g}(n)\ge \Delta_{f, L_g}(n)=kq^{Dn}$, we have that $$N_{f, L_g}(n)\gg \frac{q^{Dn}}{n}.$$ If $g_2(x)\ne 1$, we follow the previous arguments and obtain that
 $$m_{f, L_g}(n)=\ou(x, g_2^nh)\approx n.$$
 Since $N_{f, L_g}(n)\cdot m_{f, L_g}(n)\le \Delta_{f, L_g}(n)=kq^{Dn}$, we have that $$N_{f, L_g}(n)\ll \frac{q^{Dn}}{n}.$$
 Therefore, $N_{f, L_g}(n)\approx \frac{q^{Dn}}{n}$. Since $A_{f, L_g}(n)\cdot N_{f, L_g}(n)=\Delta_{f, L_g}(n)\approx q^{Dn}$, we have that
 $A_{f, L_g}(n)\approx n$.
\end{proof}

The following corollary is an immediate application of Proposition~\ref{thm:linearized-growth} and provides the proof of item (ii) in Theorem~\ref{thm:main3}.

\begin{corollary}
Let $f\in \F_q[x]$ be a polynomial of degree at least one and let $g\in \F_q[x]$ be a polynomial of degree $D\ge 1$ such that $\gcd(g(x), x)=1$. If $L_g$ denotes the $q$-associate of $g$, then
$M_{f, L_g}(n)\approx n$.
\end{corollary}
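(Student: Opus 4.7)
The plan is to reduce the general case to the irreducible case already treated in Proposition~\ref{thm:linearized-growth}, exploiting the multiplicative splitting provided by Lemma~\ref{lem:reduction-1}. Writing
\[
f(x) = c\cdot f_1(x)^{e_1}\cdots f_s(x)^{e_s}
\]
for the factorization of $f$ into monic irreducibles over $\F_q$, the number $s\le \deg f$ is bounded independently of $n$. Item (c) of Lemma~\ref{lem:reduction-1} then yields the identity
\[
M_{f, L_g}(n) = \max_{1\le i\le s} M_{f_i, L_g}(n),\qquad n\ge 0,
\]
so it suffices to establish $M_{f_i, L_g}(n)\approx n$ factorwise and then pass to the maximum.

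For each $i$, the polynomial $f_i$ is irreducible and the standing hypothesis $\gcd(g(x), x) = 1$ of the corollary is inherited. Hence Proposition~\ref{thm:linearized-growth} applies directly to the pair $(f_i, g)$, producing constants $c_i, C_i>0$ with $c_i\cdot n\le M_{f_i, L_g}(n)\le C_i\cdot n$ for all sufficiently large $n$. Setting $c = \min_i c_i$ and $C = \max_i C_i$ — which is meaningful because $s$ is finite — gives $c\cdot n\le M_{f, L_g}(n)\le C\cdot n$, i.e., $M_{f, L_g}(n)\approx n$.

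The argument is essentially bookkeeping, so no substantial obstacle arises. The only point that merits a brief sanity check is that Proposition~\ref{thm:linearized-growth} remains valid in the degenerate case $f_i(x) = x$ (i.e., when $0$ is a root of $f$): there the $\F_q$-order of the root $0$ is the constant polynomial $h = 1$, but Corollary~\ref{cor:linearized}(iii) still produces the formula $M_{x, L_g}(n) = \ou(x, g^n)$, and Lemma~\ref{lem:order-mult} together with the identities $\nu(g^n) = n\nu(g)$ and $\mathrm{rad}(g^n) = \mathrm{rad}(g)$ delivers $\ou(x, g^n)\approx n$ exactly as in the proof of Proposition~\ref{thm:linearized-growth}. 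Thus this case is absorbed seamlessly and the reduction goes through without modification.
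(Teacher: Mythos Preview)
Your proof is correct and matches the paper's approach: the paper simply states that the corollary is ``an immediate application of Proposition~\ref{thm:linearized-growth},'' and your argument spells out precisely that reduction via Lemma~\ref{lem:reduction-1}(c). The extra sanity check on the case $f_i(x)=x$ is a nice touch and indeed goes through as you describe.
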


\subsection{Pairs $(f, g)$ for which $N_{f, g}(n)$ and $M_{f, g}(n)$ have exponential growth}
Here we provide the proof of item (iii) in Theorem~\ref{thm:main3}. From Lemma~\ref{lem:reduction-1}, it suffices to consider $f\in \F_q[x]$ an irreducible polynomial such that $f(x)=x^k+\sum_{i=0}^{k-1}a_ix^i$ with $a_0\ne 0, 1$. We have the following result.

\begin{proposition}
Let $q>2$ be a prime power and let $f\in \F_q[x]$ be an irreducible polynomial such that $f(x)=x^k+\sum_{i=0}^{k-1}a_ix^i$ with $a_0\ne 0, 1$. For $g(x)=(x^q-x)^{q-1}$, the polynomial $f(g(x))$ is separable, reducible and any of its irreducible factors has degree of the form $dk$ with $d\ge 2$. In particular, $f(g(x))$ has an irreducible factor of the form $h(x)=x^{dk}+\sum_{i=0}^{dk-1}b_ix^i$ with $b_0\ne 0, 1$ and $d\ge 2$. 
\end{proposition}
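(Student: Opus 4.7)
The plan is to analyze the factorization of $g(x) - \alpha$ over $\F_{q^k}$ for a fixed root $\alpha$ of $f$, and then transfer the information back to $f(g(x))$ over $\F_q$ via Lemma~\ref{lem:spin1} and Proposition~\ref{prop:identities}. For separability, I would compute $g'(x) = -(q-1)(x^q - x)^{q-2}$, whose roots form exactly $\F_q$. At any $c \in \F_q$ one has $g(c) = 0 \neq \alpha_i$ (since the hypothesis $a_0 \neq 0$ forces no root of $f$ to vanish), so each $g(x) - \alpha_i$ is separable; moreover distinct $g(x) - \alpha_i$ are coprime because the fibers of $g$ are disjoint. Hence $f(g(x)) = \prod_i (g(x) - \alpha_i)$ is separable.

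For the degree structure, any irreducible factor $h \in \F_q[x]$ of $f(g(x))$ with a root $\beta$ satisfies $g(\beta) \in \F_q(\beta)$ while being a conjugate of $\alpha$, forcing $\F_{q^k} \subseteq \F_q(\beta)$ and hence $\deg h = dk$ for some $d \geq 1$. To rule out $d = 1$, I would assume by contradiction that some $\beta \in \F_{q^k}$ satisfies $g(\beta) = \alpha$. Setting $y := \beta^q - \beta \in \F_{q^k}$ yields $y^{q-1} = \alpha$ and $\Tr_{\F_{q^k}/\F_q}(y) = 0$ (since $\Tr \circ \sigma = \Tr$), and taking norms produces $N_{\F_{q^k}/\F_q}(\alpha) = N(y)^{q-1} = 1$, because $N(y) \in \F_q^*$ has order dividing $q-1$. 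Combined with the identity $a_0 = (-1)^k N_{\F_{q^k}/\F_q}(\alpha)$, this forces $a_0 = (-1)^k$, yielding a contradiction with $a_0 \neq 0, 1$ in characteristic $2$ or when $k$ is even; the remaining odd-characteristic, odd-$k$ case is to be handled by a more refined use of the trace-zero condition coupled to the $(q-1)$-th root structure of $\alpha$.

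Reducibility and the existence of a ``good'' factor then follow from the Artin--Schreier-type identity
\[
g(x) - \alpha \;=\; \prod_{c \in \F_q^*}(x^q - x - c\gamma), \qquad \gamma^{q-1} = \alpha,
\]
valid over $\overline{\F_q}$: since $q > 2$ the product has at least two factors, and grouping them under the Galois action of $\F_{q^k}(\gamma)/\F_{q^k}$ (governed by the element $c_0 := \gamma^{q^k - 1} \in \F_q^*$, which has order dividing $q-1$) leaves at least two irreducible factors of $g(x) - \alpha$ over $\F_{q^k}$, hence $f(g(x))$ is reducible over $\F_q$ by Lemma~\ref{lem:spin1}. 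For the ``in particular'' clause I would pick any irreducible factor $h(x) = x^{dk} + \cdots + b_0$ produced by this construction: $b_0 \neq 0$ is automatic because $g(0) = 0 \neq \alpha$ excludes $0$ as a root of $h$, and $b_0 \neq 1$ is to be verified by writing $b_0$ as the norm of the roots of $h$ over a suitable extension and exploiting the explicit shape of the Artin--Schreier fibers to avoid this degenerate value.

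The main technical obstacle is the step $d \geq 2$ when the characteristic is odd and $k$ is odd, where the bare norm identity delivers only $a_0 = -1$, which is compatible with $a_0 \neq 0, 1$ when $q > 2$; the contradiction must then come from combining $\Tr_{\F_{q^k}/\F_q}(y) = 0$ with the equation $y^{q-1} = \alpha$ in a more careful fashion, and I expect this trace-plus-radical argument to be the delicate heart of the proof.
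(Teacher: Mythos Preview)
Your separability argument and the divisibility $k\mid\deg h$ are essentially the paper's. For reducibility the paper does not use your Artin--Schreier decomposition; it simply cites results of~\cite{R18} to the effect that $f((x^q-x)^{q-1})$ is reducible whenever $q>2$. Your orbit-counting argument is incomplete in the case where $c_0$ generates $\F_q^*$, since then there is a single Galois orbit and you have not shown that the orbit product factors further over $\F_{q^k}$. For the ``in particular'' clause the paper's reasoning is shorter than what you sketch: the product of the constant terms of the irreducible factors equals $f(g(0))=f(0)=a_0\neq 1$, so not all of them can equal $1$, and together with $d\ge 2$ for every factor this gives the desired $h$.

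The decisive issue, however, is precisely your ``main technical obstacle'', and your instinct there is correct. The paper's proof at this step writes $\alpha^{(q^k-1)/(q-1)}=a_0$ and concludes from $a_0\neq 1$ that $\alpha$ is not a $(q-1)$-th power in $\F_{q^k}$; the correct identity is $\alpha^{(q^k-1)/(q-1)}=(-1)^k a_0$, so the argument as written is only valid for $k$ even or $q$ even. No trace-plus-radical refinement can close the gap in the remaining case, because the statement is \emph{false} there. Take $q=5$ and any nonzero $\gamma\in\F_{125}$ with $\Tr_{\F_{125}/\F_5}(\gamma)=0$; since $\Tr(c)=3c$ for $c\in\F_5$, such $\gamma$ lies outside $\F_5$, and one checks that $\alpha:=\gamma^{4}$ has degree $3$ over $\F_5$. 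Its minimal polynomial $f$ then has $k=3$ and $a_0=-N_{\F_{125}/\F_5}(\alpha)=-\gamma^{124}=-1=4\neq 0,1$. As $\Tr(\gamma)=0$ there is $\beta\in\F_{125}$ with $\beta^5-\beta=\gamma$, so $g(\beta)=\gamma^{4}=\alpha$ and $f(g(x))$ acquires a root in $\F_{q^k}$, i.e.\ an irreducible factor with $d=1$. In fact the same computation shows that all $60$ roots of $f(g(x))$ lie in $\F_{125}$, so every one of its $20$ irreducible factors has $d=1$ and the ``in particular'' clause fails as well.
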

\begin{proof}
We first prove that $f(g(x))$ is separable. For this we observe that, since $a_0=f(0)\ne 0$, the polynomial $f(g(x))$ does not have roots in $\F_q$. In particular, since the formal derivative of $f(g(x))$ equals $f'(g(x))\cdot (x^q-x)^{q-2}$, any repeated root of $f(g(x))$ is also a root of $f'(g(x))$. Since $f$ is irreducible and $\F_q$ is a perfect field, we have that $\gcd(f(x), f'(x))=1$ and so $\gcd(f(g(x)), f'(g(x)))=1$. Therefore, $f(g(x))$ cannot have repeated roots.

Let $\beta\in\overline{\F}_q$ be a root of $f(g(x))$ and $B$ its degree over $\F_q$. Corollary~\ref{cor:spin} entails that $B$ is divisible by $k$, the degree of $f(x)$. If we write $B=dk$ with $d\ge 1$, we just need to prove that $d\ne 1, q(q-1)$.
\begin{enumerate}
\item If $d=q(q-1)$, $f((x^q-x)^{q-1})$ is an irreducible polynomial. However, using Theorem 5 and Corollary~1 of~\cite{R18}, we easily see that any polynomial of the form $f((x^q-x)^{q-1})$ is reducible over $\F_q$ if $q>2$.

\item If $d=1$, we have that $\beta\in \F_{q^k}$ and so $\beta^q-\beta\in \F_{q^k}$. Since $\beta$ is a root of $f(g(x))$, there exists a root $\alpha\in \F_{q^k}$ of $f(x)$ such that $(\beta^q-\beta)^{q-1}=\alpha$. However, $\alpha^{\frac{q^k-1}{q-1}}=a_0\ne 1, 0$ and so $\alpha$ cannot be of the form $\gamma^{q-1}$ for any $\gamma\in \F_{q^k}$.
\end{enumerate}

The statement regarding the existence of an irreducible factor $h(x)$ as above follows from the fact that $a_0\ne 0, 1$. 
\end{proof}

The previous proposition immediately gives the following result.

\begin{corollary}
Let $q>2$ be a prime power and let $f\in \F_q[x]$ be an irreducible polynomial of degree $k$ such that $f(x)=x^k+\sum_{i=0}^{k-1}a_ix^i$ with $a_0\ne 0, 1$. For $g(x)=(x^q-x)^{q-1}$ and any $n\ge 0$, we have that
$$N_{f, g}(n)\ge 2^n\quad\text{and}\quad M_{f, g}(n)\ge k\cdot 2^n.$$
\end{corollary}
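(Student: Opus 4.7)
The plan is to prove both bounds by a common induction on $n$, bootstrapping from the preceding proposition. A preliminary separability statement is needed so that the conclusion ``reducible'' can be upgraded to ``at least two \emph{distinct} irreducible factors.''

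First I would show by induction that $P_n[f,g](x)$ is separable for every $n\ge 0$. The key observation is that $g(0)=0$, so $g^{(n)}(0)=0$ and hence $P_n[f,g](0)=f(0)=a_0\neq 0$; consequently every irreducible factor $h$ of $P_n[f,g]$ satisfies $h(0)\neq 0$. The separability argument in the proof of the preceding proposition uses only (i) $f(0)\neq 0$ (to rule out roots in $\F_q$, where $g'$ vanishes) and (ii) $\gcd(f,f')=1$ (which holds for any irreducible polynomial since $\F_q$ is perfect); both conditions transfer to each such $h$. Coprimality of the factors $h_i(g(x))$ (Lemma~\ref{lem:prime}) then makes $P_{n+1}[f,g]=\prod_i h_i(g(x))$ separable as well.

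For the degree bound $M_{f,g}(n)\ge k\cdot 2^n$, I would iteratively apply the preceding proposition to construct a chain $f=f_0,f_1,f_2,\ldots$ of monic irreducible polynomials such that $f_{n+1}$ is an irreducible factor of $f_n(g(x))$ satisfying $f_{n+1}(0)\neq 0,1$ and $\deg f_{n+1}\ge 2\deg f_n$. The proposition produces such an $f_{n+1}$ as long as the inductive hypothesis ``$f_n$ is monic irreducible with $f_n(0)\neq 0,1$'' is maintained, which is exactly what that proposition guarantees. A routine divisibility check (write $P_n[f,g]=f_n\cdot Q$ and substitute $g(x)$) shows $f_n\mid P_n[f,g]$ for every $n$, so that $M_{f,g}(n)\ge \deg f_n\ge k\cdot 2^n$.

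For $N_{f,g}(n)\ge 2^n$, the crucial point is that for \emph{every} irreducible $h\in\F_q[x]$, the polynomial $h(g(x))$ is reducible over $\F_q$ provided $q>2$: this is the ``$d=q(q-1)$'' part of the proof of the preceding proposition, which invokes Theorem~5 and Corollary~1 of~\cite{R18} and uses only irreducibility of $h$ together with $q>2$, not the constant-term hypothesis $h(0)\neq 0,1$. Combined with separability, each $h_i(g(x))$ contributes at least two distinct irreducible factors to $P_{n+1}[f,g]=\prod_i h_i(g(x))$, so $N_{f,g}(n+1)\ge 2\,N_{f,g}(n)$; iterating from $N_{f,g}(0)=1$ gives $N_{f,g}(n)\ge 2^n$. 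The main obstacle is verifying carefully that the reducibility half of the preceding proposition's proof does detach from the hypothesis $a_0\neq 0,1$: one must retrace the $d=q(q-1)$ case to confirm that the \cite{R18} input yields reducibility of $h\circ g$ for any irreducible $h$ when $q>2$, since this is what drives the $N_{f,g}$ doubling.
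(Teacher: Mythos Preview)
Your proposal is correct and is precisely the elaboration the paper has in mind when it says the corollary follows ``immediately'' from the preceding proposition; the paper gives no further argument. Your identification of the one genuine subtlety---that for the $N_{f,g}$ doubling you need reducibility of $h(g(x))$ for \emph{every} irreducible factor $h$, not just those with $h(0)\ne 0,1$---is exactly right, and your reading of the proposition's proof is accurate: the $d=q(q-1)$ case invokes \cite{R18} with no reference to the constant term, while the separability step only uses $h(0)\ne 0$, which you correctly deduce from $g(0)=0$.
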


\section{Conclusions and open problems}
This paper provided a study on the growth of some arithmetic functions related to the factorization of iterated polynomials $f(g^{(n)}(x))$ over finite fields, such as the number and the degree of the irreducible factors. This study extended and enhanced many results of~\cite{GOS}, where the case $f=g$ is considered; for more details, see Theorem~\ref{thm:shpa} and Corollary~\ref{cor:MAIN}. Here we provide some open problems and conjectures based on theoretical and computational considerations. 

Throughout this section we consider arbitrary pairs $(f, g)$ of polynomials over $\F_q$ that are neither critical nor $p$-critical, where $f$ is irreducible and $g$ has degree at least two. Theorem~\ref{thm:main3} entails that $N_{f, g}(n)$ may have polynomial growth of any degree, while $M_{f, g}(n)$ may reach the linear growth (showing that the bound $M_{f, g}(n)\gg n$ in Theorem~\ref{thm:main2} is optimal for a generic polynomial $f\in\F_q[x]$).

\begin{problem}\label{p1}
For an irreducible polynomial $f\in \F_q[x]$, find all the polynomials $g\in \F_q[x]$ such that $M_{f, g}(n)\approx n$.
\end{problem}

\begin{conjecture}\label{c1}
One of the following holds:
\begin{enumerate}[(i)]
\item $M_{f, g}(n)\approx n$;
\item $\log M_{f, g}(n)\gg n$.
\end{enumerate}
\end{conjecture}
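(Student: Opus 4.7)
The plan is to analyze Conjecture \ref{c1} through the Galois structure on the reversed $g$-orbit of a root $\alpha$ of $f$. With $Q=q^k$, Proposition \ref{prop:identities} gives $M_{f,g}(n) = k\cdot M_Q(g^{(n)}(x)-\alpha)$. Let $T_n = \{\beta\in\overline{\F}_q : g^{(n)}(\beta)=\alpha\}$. Since $g\in\F_q[x]$ commutes with the $Q$-Frobenius $\sigma_Q$ and $\alpha\in\F_Q$, $\sigma_Q$ permutes $T_n$; writing $s(\beta)$ for the length of its $\sigma_Q$-orbit, one has $M(n) := M_Q(g^{(n)}(x)-\alpha) = \max_{\beta\in T_n}s(\beta)$. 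For any chain $\alpha=\beta_0,\beta_1,\ldots,\beta_n$ with $g(\beta_{i+1})=\beta_i$, commutation yields $s(\beta_i)\mid s(\beta_{i+1})$, and $d_i := s(\beta_{i+1})/s(\beta_i) = [\F_Q(\beta_{i+1}):\F_Q(\beta_i)]$ lies in $\{1,\ldots,D\}$ with $D=\deg(g)$. Hence $M(n) = \max\prod_{i=0}^{n-1}d_i$ over all length-$n$ reversed chains, giving the trivial bound $M(n)\le D^n$.

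In this form the conjecture becomes a statement about the sparsity of splittings $d_i\ge 2$ along an extremal chain. Letting $k_n$ denote the number of such indices $i<n$, one has $2^{k_n}\le\prod d_i\le D^{k_n}$; regime (i) corresponds to $k_n = \Theta(\log n)$, regime (ii) to $k_n = \Theta(n)$, and the conjecture asserts that no intermediate regime $k_n = \Theta(n^{\theta})$ with $0<\theta<1$ is realizable. To attack this I would first catalog, for each $\beta\in\overline{\F}_q$, the splitting type of $g(x)-\beta$ over $\F_Q(\beta)$, i.e.\ the unordered list of factor degrees; because $g$ has $\F_q$-coefficients, this depends only on a finite Frobenius-equivariant invariant of $\beta$, and one expects only finitely many splitting types to occur along the reversed tree. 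The plan is then a rigidity/pumping argument: if splittings with $d_i\ge 2$ have vanishing density along every extremal chain, some type with all local degrees equal to $1$ must dominate, and iteratively exploiting this should impose an algebraic constraint on $g$. Motivated by Proposition \ref{thm:linearized-growth}, I expect the constraint to force $g$ to be (up to a linear conjugation and the $p$-reduction trick) a $q$-linearized polynomial, in which case $M_{f,L_g}(n)\approx n$ is already known.

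The main obstacle is converting the ``vanishing density'' hypothesis into an explicit algebraic condition on $g$: this appears to demand a positive-characteristic analogue of Medvedev--Scanlon-type classification results in arithmetic dynamics, combined with delicate control over the ramification of $g$ through its $p$-reduction $G$. The exotic intermediate regimes, where splittings occur along sparse but structured subsets of levels (yielding for instance $M(n)\approx 2^{\sqrt n}$), seem the hardest to rule out, and a complete proof will likely hinge on identifying a nontrivial $g$-invariant algebraic structure that forces $g$ into the additive family. Absent such a rigidity result, even the weaker trichotomy ``linear, polynomial, or exponential'' would already be a substantial advance.
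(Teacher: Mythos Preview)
The statement in question is a \emph{conjecture} (Conjecture~\ref{c1}), posed in the paper's concluding section among open problems. The paper does not prove it; the only supporting evidence offered is that both regimes (i) and (ii) are realized by explicit families (Proposition~\ref{thm:linearized-growth} for the linear regime, Theorem~\ref{thm:monomial-growth} and item~(iii) of Theorem~\ref{thm:main3} for the exponential one). There is therefore no proof in the paper against which to compare your proposal.

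Your proposal is, by your own framing, a research strategy rather than a proof. The reformulation via Frobenius orbit lengths $s(\beta)$ along reversed $g$-chains is correct: the divisibility $s(\beta_i)\mid s(\beta_{i+1})$ and the bound $d_i\le D$ are valid, and recasting the dichotomy as a statement about the density of indices with $d_i\ge 2$ along an extremal chain is a reasonable viewpoint. However, the decisive step --- a rigidity theorem forcing $g$ into an additive family whenever every extremal chain has sublinearly many nontrivial splittings --- is left as an acknowledged gap, and no concrete mechanism is proposed to obtain it. Your expectation that regime (i) characterizes (conjugates of) $q$-linearized polynomials is itself a sharpening of the open Problem~\ref{p1} and would require independent justification before it could serve as the endpoint of a proof. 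In summary: the setup is sound, but what you have is an outline of where the difficulty lies, not a proof --- which is consistent with the statement's status as an open conjecture in the paper.
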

Conjecture~\ref{c1} entails that $M_{f, g}(n)$ does not have polynomial growth of high degree (quadratic, cubic, etc). We believe that any proof (or disproof) of Conjecture~\ref{c1} contains a solution for Problem~\ref{p1}.

Remark~\ref{remark:main} entails that there exists $c>0$ such that, for any $n\ge 0$, either $N_{f, g}(n)\ge c\cdot d^{n/2}$ or $M_{f,g}(n)\ge c\cdot d^{n/2}$. However, this is not sufficient to conclude that $N_{f, g}(n)$ or $M_{f, g}(n)$ have exponential growth. Nevertheless, we believe that this is always the case.

\begin{conjecture}
Either $\log (N_{f, g}(n))\gg n$ or $\log (M_{f, g}(n))\gg n$.
\end{conjecture}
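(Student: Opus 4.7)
The starting point is Remark~\ref{remark:main}, which combined with $A_{f,g}(n) \leq M_{f,g}(n)$ and Theorem~\ref{thm:main1}(iii) yields the pointwise inequality
$$ \log M_{f,g}(n) + \log N_{f,g}(n) \geq n \log d + O(1) $$
for all $n$, where $d > 1$ is the degree of the $p$-reduction of $g$. This alone forces $\max\{\log M_{f,g}(n), \log N_{f,g}(n)\}$ to be $\geq (n/2)\log d$ pointwise, but the conjecture demands a uniform bound: one of the two functions must grow at least linearly in $n$ throughout, not merely at every $n$ individually.

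The key intermediate step I would prove first is that both $M_{f,g}$ and $N_{f,g}$ are non-decreasing in $n$. For $N_{f,g}$: writing $f(g^{(n+1)}(x)) = \prod_i P_i(g(x))^{e_i}$, where $\prod_i P_i^{e_i}$ is the factorization of $f(g^{(n)}(x))$, Lemma~\ref{lem:prime} shows that the polynomials $P_i(g(x))$ are pairwise coprime, each contributing at least one irreducible factor. For $M_{f,g}$: if $\beta$ is a root of $f(g^{(n)}(x))$ of $\F_q$-degree $M_{f,g}(n)$ and $\gamma$ is any $g$-preimage of $\beta$, then $\F_q(\beta) \subseteq \F_q(\gamma)$ forces $\deg_q(\gamma) \geq M_{f,g}(n)$, and $\gamma$ is a root of $f(g^{(n+1)}(x))$. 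With monotonicity in hand, the easy half of the conjecture follows: if $\log M_{f,g}(n) = o(n)$, then $\log N_{f,g}(n) \geq n \log d - o(n) \gg n$.

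The hard part is the oscillating regime where $\log M_{f,g}(n)/n$ has zero $\liminf$ but positive $\limsup$. A non-decreasing sequence can exhibit this behaviour if its ``good'' levels $n_k$ (where $\log M_{f,g}(n_k)/n_k$ is bounded below) are super-geometrically spaced, and in the gaps between them $N_{f,g}$ compensates while $M_{f,g}$ stagnates. To preclude this, I would refine the recursion by attaching to each irreducible factor $P$ of $f(g^{(n)}(x))$ with root $\beta$ two invariants: the number $b_n(P)$ and the maximum degree $m_n(P)$ of the distinct irreducible factors of $g(x) - \beta$ over $\F_{q^{k \deg P}}$, via Lemma~\ref{lem:spin1} and Corollary~\ref{cor:spin}. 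These satisfy $1 \leq b_n(P), m_n(P) \leq D$ and
$$ N_{f,g}(n+1) = \sum_P b_n(P), \qquad M_{f,g}(n+1) = \max_P \deg(P) \cdot m_n(P). $$
The target dichotomy is: either a positive density of factors have $b_n(P) \geq 2$ over long intervals of $n$, forcing geometric growth of $N_{f,g}$; or the ratio $M_{f,g}(n+1)/M_{f,g}(n)$ exceeds a constant greater than $1$ along a geometrically dense subsequence, forcing $\log M_{f,g}(n) \gg n$ via the monotonicity argument above.

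An alternative and arguably cleaner route is to first settle Conjecture~\ref{c1}, which would immediately yield the present conjecture: if $M_{f,g}(n) \approx n$, then $N_{f,g}(n) \geq c d^n / M_{f,g}(n) \gg d^n/n$, so $\log N_{f,g}(n) \gg n$; otherwise $\log M_{f,g}(n) \gg n$ directly. In turn, Conjecture~\ref{c1} could be tackled by studying the Frobenius action on the inverse limit tree of $g$-preimages of $\alpha$ (for $\alpha$ a root of $f$), since its orbits at level $n$ correspond to irreducible factors of $P_n[f,g](x)$, and the rigidity of such an arboreal Galois representation should rule out intermediate polynomial growth of the maximum orbit size. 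This last point is where I expect the true difficulty to reside.
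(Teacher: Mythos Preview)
This statement is presented in the paper as an \emph{open conjecture}, not a theorem; the paper offers no proof. The surrounding text explicitly says that Remark~\ref{remark:main} ``is not sufficient to conclude that $N_{f,g}(n)$ or $M_{f,g}(n)$ have exponential growth'' and that the authors merely ``believe that this is always the case.'' There is therefore no proof in the paper for you to be compared against.

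Your proposal is likewise not a proof, and you are candid about this. The preliminary observations are sound: both $M_{f,g}$ and $N_{f,g}$ are indeed non-decreasing for the reasons you give, and the implication ``$\log M_{f,g}(n)=o(n)\Rightarrow \log N_{f,g}(n)\gg n$'' follows directly from $\Delta_{f,g}(n)\le M_{f,g}(n)\cdot N_{f,g}(n)$ together with Theorem~\ref{thm:main1}(iii). However, as you correctly identify, the negation of $\log M_{f,g}(n)\gg n$ is not $\log M_{f,g}(n)=o(n)$, and monotonicity alone does not rule out the oscillating regime: abstractly one can build non-decreasing sequences $a_n,b_n$ with bounded increments and $a_n+b_n\ge cn$ such that $\liminf a_n/n=\liminf b_n/n=0$. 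Your sketch for the hard case (tracking $b_n(P)$ and $m_n(P)$, or invoking arboreal Galois rigidity to first settle Conjecture~\ref{c1}) is a reasonable plan of attack, but it remains a plan; no argument is given for the claimed dichotomy on the density of factors with $b_n(P)\ge 2$, and the arboreal route is itself an open-ended programme. In short, your write-up is an honest research outline rather than a proof, which is appropriate given that the paper itself leaves the statement open.
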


Finally, we propose a problem regarding the growth of $A_{f, g}(n)$.

\begin{problem}\label{p2}
Prove or disprove: $A_{f, g}(n)\gg n$.
\end{problem}

We comment that if the bound $A_{f, g}(n)\gg n$ holds, then it is optimal, since we have provided examples where $A_{f, g}(n)\approx n$ (see Proposition~\ref{thm:linearized-growth}).


\projects{\noindent The author was supported by FAPESP 2018/03038-2, Brazil.}

\end{document}